\theoremstyle{plain}
\newtheorem{corollary}{Corollary}
\newtheorem{definition}{Definition}
\newtheorem{lemma}{Lemma}
\newtheorem{notation}{Notation}
\newtheorem{proposition}{Proposition}
\newtheorem{remark}{Remark}
\newtheorem{theorem}{Theorem}
\numberwithin{equation}{section}
\begin{document}
\title[Nonlinear degenerating parabolic equation \& embedding theorems]{On
Nonlinear Parabolic Equation in Nondivergent Form with Implicit Degeneration
and Embedding Theorems }
\author{Kamal N. Soltanov}
\address{{\small Department of Mathematics, Faculty of Sciences, Hacettepe
University Beytepe, Ankara, TR-06532, TURKEY}}
\email{{\small soltanov@hacettepe.edu.tr ; sultan\_kamal@hotmail.com }}
\urladdr{}
\author{Mahmud A. Ahmadov}
\curraddr{Department of Mathematics, Holyoke Community College, 303
Homestead Ave., Holyoke, MA, 01040 , USA }
\email{mahmadov@hcc.edu}
\urladdr{}
\date{}
\subjclass[2000]{ Primary 35K55, 35K65, 35G30; Secondary 46E40, 46.20, 46T99}
\keywords{Nonlinear parabolic equation, implicit degenerating, nonlinear
functional spaces, embedding theorems. }

\begin{abstract}
The mixed problem for the implicit degenerating nonlinear parabolic equation
is considered, and the solvability and behavior of solutions of this problem
are studied. Furthermore, some classes of function spaces and their
relations with Sobolev spaces are investigated, embedding and compactness
theorems for these spaces are proved
\end{abstract}

\maketitle

\section{Introduction}

\smallskip Consider the following problem 
\begin{equation}
\frac{\partial u}{\partial t}-\left\vert u\right\vert ^{\rho }\Delta
u+b_{0}\left\vert u\right\vert ^{\mu +1}=h\left( t,x\right) ,\quad \left(
t,x\right) \in Q_{T}\equiv \left( 0,T\right) \times \Omega ,  \label{Eqn 1.1}
\end{equation}%
\begin{equation}
u\left( 0,x\right) =0,\quad x\in \Omega \subset R^{n},\quad n\geq 1,
\label{Eqn 1.2}
\end{equation}%
\begin{equation}
u\left( t,x\right) \left\vert \ _{\Gamma }\right. =0,\quad \Gamma \equiv 
\left[ 0,T\right] \times \partial \Omega ,\ T>0,  \label{Eqn 1.3}
\end{equation}%
Here $\Omega $ is a bounded domain with sufficiently smooth boundary $%
\partial \Omega $ (for example, $\partial \Omega \in C^{1}$), $\Delta \equiv 
\overset{n}{\underset{i=1}{\sum }}\frac{\partial ^{2}}{\partial x_{i}^{2}}$
is a Laplacian, $\rho >0,\mu \geq 0$, $b_{0}\in R^{1}$ are some numbers, $%
h\left( t,x\right) $ is a certain function.{}

The equation (\ref{Eqn 1.1}) describes the behavior of a flow on a boundary
layer (see, \cite{O, Wa, H}) and is also called Prandtl-von Mises type
equation. The solvability of such type of equations and the behavior of
their solutions are considered in many works (for example, \cite{FMcL, LDP,
S1, S2, S3, TI, Wa, Wig, Win} and references therein). In one-dimensional
case, the existence of solution of the considered equation, and functional
spaces where the solution belongs to are obtained in \cite{S1, S5} (see also
references in \cite{S3}).

The main point of this work is considering the problem (\ref{Eqn 1.1}) - (%
\ref{Eqn 1.3}) in $n$-dimensional case without additional conditions.
Namely, the existence theorem is proved; spaces generated by the considered
problem, their properties (particularly, some smoothness results of
solutions are obtained as corrolaries of proved embedding theorems) and the
behavior of solution are studied.

Boundary value problems often lead to study of functional spaces related to
the considered problems directly. More precisely, mentioned spaces are
domains of operators generated by boundary value problems. For instance, we
can say that the Sobolev spaces and their different generalizations appear
while studying boundary value problems for the linear differential equations.

Unlike linear boundary value problems, in nonlinear cases sets generated by
problems, i.e. the domains of the corresponding operators, roughly speaking,
are subsets of linear spaces, but not possessing the linear structure.
Therefore, in the beginning we would be concentrated on investigation of
these infinity dimensional manifolds.

\section{Existence Theorem}

Define the following function space: 
\begin{equation}
\mathbf{P}_{1,p,q}\left( Q_{T}\right) \equiv W_{q}^{1}\left( 0,T;L_{q}\left(
\Omega \right) \right) \cap L_{p}\left( 0,T;\overset{0}{S}_{\Delta ,\rho
,2}\left( \Omega \right) \right) ,  \label{Eqn 2.1}
\end{equation}%
where $p,q\geq 1$, $\rho \geq 0$ are some numbers, $W_{q}^{1}\left(
0,T;L_{q}\left( \Omega \right) \right) $ is a vector Sobolev space, and for
functions $u:\Omega \longrightarrow R^{1}$

\begin{equation}
\overset{0}{S}_{\Delta ,\rho ,2}\left( \Omega \right) \equiv \left\{ u\in
L_{1}\left( \Omega \right) \left\vert \left[ u\right] _{S_{\Delta ,\rho
,2}}^{\rho +2}\equiv \underset{\Omega }{\int }\left\vert u\right\vert ^{\rho
}\left\vert \Delta u\right\vert ^{2}dx\right. <+\infty ,\ u\left( x\right)
\left\vert \ _{\partial \Omega }\right. =0\right\} .  \label{Eqn 2.2}
\end{equation}%
and for functions $u:Q_{T}\longrightarrow R^{1}$ 
\begin{equation*}
L_{p}\left( 0,T;\overset{0}{S}_{\Delta ,\rho ,2}\left( \Omega \right)
\right) \equiv \left\{ u\in L_{1}\left( Q_{T}\right) \left\vert ~\left[ u%
\right] _{L\left( S_{\Delta ,\rho ,2}\right) }^{p}\equiv \underset{0}{%
\overset{T}{\int }}\left[ u\right] _{S_{\Delta ,\rho ,2}}^{p}dt\right.
<+\infty ,\right. \ 
\end{equation*}%
\begin{equation}
\left. u\left( t,x\right) \left\vert \ _{\left[ 0,T\right] \times \partial
\Omega }\right. =0\right\} .  \label{Eqn 2.3}
\end{equation}

Our main result on solvability of the problem (\ref{Eqn 1.1}) - (\ref{Eqn
1.3}) is

\begin{theorem}
\label{Theorem 2.1}Let $\rho >0$ , $\min \left\{ 0,\frac{\rho }{2}-1\right\}
\leq \mu <\rho \leq 2$ or\ $\frac{\rho }{2}-1\leq \mu <\rho $ and $b_{0}\in
R^{1}$. Then, for any $h\in L_{2}\left( 0,T;\overset{0}{W}\text{\/}%
_{2}^{1}\left( \Omega \right) \right) $ the problem (\ref{Eqn 1.1}) - (\ref%
{Eqn 1.3}) is solvable in $\mathbf{P}\left( Q\right) $ $\equiv $ $\mathbf{P}%
_{1,p,q}\left( Q_{T}\right) \cap \left\{ u\left( t,x\right) \left\vert \
u\left( 0,x\right) =0\right. \right\} $, where $p=\rho +2$, $q=p%
{\acute{}}%
=\frac{\rho +2}{\rho +1}$.
\end{theorem}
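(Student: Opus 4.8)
The plan is to construct the solution by a Galerkin scheme combined with a regularization that removes the implicit degeneration, and then to recover $u$ by passing to the limit using the compactness properties of the space $\mathbf{P}\left( Q\right) $ established later via the embedding theorems. Concretely, I would first replace the degenerate coefficient $\left\vert u\right\vert ^{\rho }$ by $\left( \left\vert u\right\vert ^{2}+\varepsilon \right) ^{\rho /2}$ to obtain a uniformly parabolic problem, and seek Galerkin approximants $u_{N}=\sum_{j=1}^{N}c_{j}\left( t\right) w_{j}\left( x\right) $ in the span of the eigenfunctions $w_{j}$ of $-\Delta $ under the homogeneous Dirichlet condition, so that the boundary condition \eqref{Eqn 1.3} and the compatibility $u_{N}\left( 0,x\right) =0$ from \eqref{Eqn 1.2} hold automatically. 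Local solvability of the resulting ODE system for the $c_{j}$ follows from the Carath\'{e}odory existence theorem, and the a priori bounds below will extend the solution to all of $\left[ 0,T\right] $.

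The decisive step is the a priori estimate obtained by testing the regularized equation with $-\Delta u_{N}$ (a legitimate linear combination of the Galerkin equations, since $-\Delta u_{N}$ lies in the span of the $w_{j}$). Using $\left. u_{N}\right\vert _{\partial \Omega }=0$ and integrating by parts one gets
\[
\frac{1}{2}\frac{d}{dt}\left\Vert \nabla u_{N}\right\Vert _{L_{2}}^{2}+\int_{\Omega }\left( \left\vert u_{N}\right\vert ^{2}+\varepsilon \right) ^{\rho /2}\left\vert \Delta u_{N}\right\vert ^{2}dx+b_{0}\left( \mu +1\right) \int_{\Omega }\left\vert u_{N}\right\vert ^{\mu -1}u_{N}\left\vert \nabla u_{N}\right\vert ^{2}dx=\int_{\Omega }\nabla h\cdot \nabla u_{N}\,dx,
\]
where the right-hand side is controlled by $\left\Vert \nabla h\right\Vert _{L_{2}}\left\Vert \nabla u_{N}\right\Vert _{L_{2}}$ thanks to $h\in L_{2}\left( 0,T;\overset{0}{W}_{2}^{1}\right) $. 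The second term reproduces the seminorm $\left[ u_{N}\right] _{S_{\Delta ,\rho ,2}}^{\rho +2}$ as $\varepsilon \to 0$, while the third, lower-order term is exactly what forces the hypotheses on $\mu $ and $\rho $: integrating it by parts and applying Cauchy--Schwarz,
\[
\left\vert \int_{\Omega }\left\vert u_{N}\right\vert ^{\mu }u_{N}\Delta u_{N}\,dx\right\vert \leq \left( \int_{\Omega }\left\vert u_{N}\right\vert ^{2\mu +2-\rho }dx\right) ^{1/2}\left[ u_{N}\right] _{S_{\Delta ,\rho ,2}}^{\left( \rho +2\right) /2},
\]
and the exponent $2\mu +2-\rho $ is nonnegative exactly when $\mu \geq \frac{\rho }{2}-1$, which is guaranteed by both alternatives of the hypothesis (the constraint being automatic when $\rho \leq 2$ since $\mu \geq 0$), while $\mu <\rho $ keeps $2\mu +2-\rho $ strictly below the critical exponent $\rho +2$. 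A Young inequality then absorbs a small multiple of $\left[ u_{N}\right] _{S_{\Delta ,\rho ,2}}^{\rho +2}$ into the good term, leaving a remainder controlled by $\left\Vert u_{N}\right\Vert _{L_{\rho +2}}$, and Gronwall's lemma yields uniform bounds for $\left\Vert \nabla u_{N}\right\Vert _{L_{\infty }\left( 0,T;L_{2}\right) }$ and for $\int_{0}^{T}\left[ u_{N}\right] _{S_{\Delta ,\rho ,2}}^{\rho +2}dt$, that is, for $u_{N}$ in $L_{p}\left( 0,T;\overset{0}{S}_{\Delta ,\rho ,2}\right) $ with $p=\rho +2$.

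Next I would read the time derivative off the equation, $\partial _{t}u_{N}=\left( \left\vert u_{N}\right\vert ^{2}+\varepsilon \right) ^{\rho /2}\Delta u_{N}-b_{0}\left\vert u_{N}\right\vert ^{\mu +1}+h$, and estimate its $L_{q}\left( Q_{T}\right) $ norm with $q=p^{\prime }=\frac{\rho +2}{\rho +1}$. Writing $\left\vert u_{N}\right\vert ^{\rho }\Delta u_{N}=\left\vert u_{N}\right\vert ^{\rho /2}\cdot \left\vert u_{N}\right\vert ^{\rho /2}\Delta u_{N}$ and applying H\"{o}lder with exponents dictated by $q$, the principal term lies in $L_{q}$ provided $u_{N}\in L_{\rho +2}$ in space, which is precisely the content of the embedding $\overset{0}{S}_{\Delta ,\rho ,2}\left( \Omega \right) \hookrightarrow L_{\rho +2}\left( \Omega \right) $ supplied by the embedding theorems of this paper; the term $b_{0}\left\vert u_{N}\right\vert ^{\mu +1}$ is subcritical under $\mu <\rho $, and $h\in L_{2}$. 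This gives a uniform bound in $W_{q}^{1}\left( 0,T;L_{q}\right) $, so $u_{N}$ is bounded in $\mathbf{P}_{1,p,q}\left( Q_{T}\right) $. Invoking the compactness theorem for $\mathbf{P}\left( Q\right) $ (an Aubin--Lions-type result proved below) I extract a subsequence with $u_{N}\to u$ strongly in $L_{p}\left( Q_{T}\right) $ and a.e., $\partial _{t}u_{N}\rightharpoonup \partial _{t}u$ weakly in $L_{q}$, and $\left\vert u_{N}\right\vert ^{\rho /2}\Delta u_{N}\rightharpoonup \chi $ weakly in $L_{2}\left( Q_{T}\right) $.

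The hard part will be the passage to the limit in the degenerate principal term. From $u_{N}\to u$ a.e. together with the uniform $L_{\rho +2}$ bound, the weights $\left\vert u_{N}\right\vert ^{\rho /2}$ converge strongly in an appropriate $L_{r}\left( Q_{T}\right) $, while $\Delta u_{N}\to \Delta u$ in the sense of distributions; the difficulty, characteristic of the implicit degeneration, is that $\Delta u_{N}$ is only controlled in the weighted norm and is uncontrolled on the vanishing set $\left\{ u=0\right\} $, so one cannot directly identify $\chi $ with $\left\vert u\right\vert ^{\rho /2}\Delta u$. I would resolve this by combining the strong convergence of $\left\vert u_{N}\right\vert ^{\rho /2}$ with the weak convergence of $\left\vert u_{N}\right\vert ^{\rho /2}\Delta u_{N}$ in a weak--strong product argument, which yields $\left\vert u_{N}\right\vert ^{\rho }\Delta u_{N}\rightharpoonup \left\vert u\right\vert ^{\rho /2}\chi $, supplemented by a Minty-type monotonicity argument adapted to the nondivergent operator to pin the limit down on $\left\{ u=0\right\} $ and to confirm $\left\vert u\right\vert ^{\rho /2}\chi =\left\vert u\right\vert ^{\rho }\Delta u$; the lower-order term passes to the limit by a.e. convergence and the uniform integrability furnished by the $L_{\rho +2}$ bound and $\mu <\rho $. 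Finally, $\partial _{t}u\in L_{q}\left( Q_{T}\right) $ gives $u\in C\left( \left[ 0,T\right] ;L_{q}\right) $, so the initial condition $u\left( 0,x\right) =0$ survives the limit, and letting $\varepsilon \to 0$ completes the construction of a solution in $\mathbf{P}\left( Q\right) $.
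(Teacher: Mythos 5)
Your route differs from the paper's in organization: the paper first proves an abstract solvability result (Theorem \ref{Theorem 2.2}), whose proof in Appendix C uses elliptic regularization \emph{in time} (adding $-\varepsilon \frac{d^{2}x_{\varepsilon }}{dt^{2}}$) together with an abstract Galerkin scheme, and then verifies its hypotheses for $f\left( u\right) =-\left\vert u\right\vert ^{\rho }\Delta u+b_{0}\left\vert u\right\vert ^{\mu +1}$ with the multiplier $L=-\Delta $; you instead regularize the degenerate coefficient and run a concrete Galerkin scheme. At the level of estimates the two are essentially the same: your testing with $-\Delta u_{N}$ is exactly the paper's coercivity computation $\int_{0}^{T}\left\langle f\left( u\right) ,Lu\right\rangle dt\geq \left( 1-\varepsilon \right) \int_{0}^{T}\left[ u\right] _{S_{\Delta ,\rho ,2}}^{\rho +2}dt-c$, your exponent bookkeeping for $\partial _{t}u_{N}$ reproduces $q=p^{\prime }=\frac{\rho +2}{\rho +1}$, and your weak--strong product device is the same mechanism the paper uses (through the factorizations (\ref{Eqn 4.1}) and (\ref{Eqn 4.2})) to prove weak continuity of $f$.

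The genuine gap is in your limit identification. First, the ``Minty-type monotonicity argument'' you invoke does not exist here: $u\mapsto -\left\vert u\right\vert ^{\rho }\Delta u$ is not monotone in any usable sense, which is precisely why the paper develops the weak-compactness framework of Theorem \ref{Theorem 2.2} and Appendix B instead of monotone-operator theory. Second, and more seriously, what that step is supposed to deliver --- the identification $\chi =\left\vert u\right\vert ^{\rho /2}\Delta u$ and hence membership of the limit $u$ in $L_{p}\left( 0,T;\overset{0}{S}_{\Delta ,\rho ,2}\left( \Omega \right) \right) $ --- is the crux of the implicit degeneration: near the boundary of the set $\left\{ u=0\right\} $ the quantities $\Delta u_{N}$ are uncontrolled, the distributional Laplacian of the limit $u$ need not be a function there, and weak--strong products give no information. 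Your scheme as written produces a pair $\left( u,\chi \right) $ with $\partial _{t}u-\left\vert u\right\vert ^{\rho /2}\chi +b_{0}\left\vert u\right\vert ^{\mu +1}=h$, but not solvability in $\mathbf{P}\left( Q\right) $ as the theorem claims. The paper's way around this is the substitution $v=\eta \left( u\right) =\left\vert u\right\vert ^{\rho }u$: by the embedding results of Section 3 (Corollary \ref{Corollary 3.2}, Remark \ref{Remark 3.3}), $u\in \overset{0}{S}_{\Delta ,\rho ,2}\left( \Omega \right) $ corresponds to $v\in W_{\beta }^{2}\left( \Omega \right) \cap \overset{0}{W}\text{\/}_{\beta }^{1}\left( \Omega \right) $, and the identity $\left\vert u\right\vert ^{\rho }\Delta u=\frac{1}{\rho +1}\Delta \left( \left\vert u\right\vert ^{\rho }u\right) -\rho \left\vert u\right\vert ^{\rho -2}u\left\vert \nabla u\right\vert ^{2}$ makes the principal term \emph{linear} in $v$, so weak compactness in the reflexive space $W_{\beta }^{2}$ both keeps the limit inside the solution space (this is the role of the ``weakly complete reflexive pn-space'' hypothesis in Theorem \ref{Theorem 2.2}) and identifies the limit of the principal term, the remainder being handled by (\ref{Eqn 4.1})--(\ref{Eqn 4.2}). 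Replacing your Minty step by this substitution would close the gap --- and essentially turns your argument into the paper's.
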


The proof is based on a general result (Theorem \ref{Theorem 2.2}) that is
given below.

Let $X$ and $Y$ be Banach spaces with duals $X^{\ast }$ and $Y^{\ast }$
respectively, $Y$ is a reflexive Banach space, $\mathcal{M}_{0}\subseteq X$
be a weakly complete "reflexive" pn-space (see, Appendix A or [S3, S5]), $%
X_{0}\subseteq \mathcal{M}_{0}\cap Y$ is a separable vector topological
space such that $\overline{X_{0}}^{\mathcal{M}_{0}}\equiv \mathcal{M}_{0}$, $%
\overline{X_{0}}^{Y}\equiv Y$ and\newline

\textit{i}) $f:\underset{0}{\mathbf{P}}$\/$_{1,p,q}\left( 0,T;\mathcal{M}%
_{0},Y\right) \rightarrow L_{q}\left( 0,T;Y\right) $ is a weakly compact
(weakly continuous) mapping, where 
\begin{equation*}
\underset{0}{\mathbf{P}}\/_{1,p,q}\left( 0,T;\mathcal{M}_{0},Y\right) \equiv
L_{p}\left( 0,T;\mathcal{M}_{0}\right) \cap W_{q}^{1}\left( 0,T;Y\right)
\cap \left\{ x\left( t\right) \left\vert \ x\left( 0\right) =0\right.
\right\} ,
\end{equation*}%
$1<\max \{q,q^{\prime }\}\leq p<\infty $, $q^{\prime }=\frac{q}{q-1}$;%
\newline

(\textit{ii}) there is a linear continuous operator $L:W_{p_{0}}^{s}\left(
0,T;X_{0}\right) \rightarrow W_{p_{0}}^{s}\left( 0,T;Y^{\ast }\right) $, $%
s\geq 0$, $p_{0}\geq 1$ such that $L$ commutes with $\frac{\partial }{%
\partial t}$ and the conjugate operator $L^{\ast }$ has $ker(L^{\ast
})=\left\{ 0\right\} $;

\textit{(iii) }there exist a continuous function $\varphi :R_{+}^{1}\cup
\left\{ 0\right\} \longrightarrow R_{+}^{1}\cup \left\{ 0\right\} $ and
numbers $\tau _{0}\geq 0$ and $\tau _{1}>0$ such that $\varphi (r)$ is not
decreasing for $\tau \geq \tau _{0}$,\ $\varphi \left( \tau _{1}\right) >0$
and for any $x\in L_{p}\left( 0,T;X_{0}\right) $ operators $f$ and $L$
satisfy the inequality 
\begin{equation*}
\underset{0}{\overset{T}{\int }}\langle f(t,x\left( t\right) ),Lx\left(
t\right) \rangle dt\geq \varphi \left( \lbrack x]_{L_{p}\left( \mathcal{M}%
_{0}\right) }\right) [x]_{L_{p}\left( \mathcal{M}_{0}\right) };
\end{equation*}

\textit{(iv) }there exist a linear bounded operator $L_{0}:X_{0}\rightarrow
Y $ and constants $C_{0}>0$, $C_{1},C_{2}\geq 0$, $\nu >1$ such that the
inequalities 
\begin{eqnarray*}
\underset{0}{\overset{T}{\int }}\langle \xi \left( t\right) ,L\xi \left(
t\right) \rangle dt &\geq &C_{0}\left\Vert L_{0}\xi \right\Vert
_{L_{q}\left( 0,T;Y\right) }^{\nu }-C_{2}, \\
\underset{0}{\overset{t}{\int }}\langle \frac{dx}{d\tau },Lx\left( \tau
\right) \rangle d\tau &\geq &C_{1}\left\Vert L_{0}x\right\Vert _{Y}^{\nu
}\left( t\right) -C_{2},\quad a.e.\ t\in \left[ 0,T\right]
\end{eqnarray*}%
hold for arbitrary $x\in W_{p}^{1}\left( 0,T;X_{0}\right) $ and $\xi \in
L_{p}\left( 0,T;X_{0}\right) $.

\begin{theorem}
\label{Theorem 2.2}Assume that conditions (i) - (iv) are fulfiled. Then, for
any $y\in G\subseteq L_{q}\left( 0,T;Y\right) $, $G\equiv $ $\underset{r\geq
\tau _{1}}{\cup }G_{r}$:\newline
\begin{equation*}
G_{r}\equiv \left\{ y\in L_{q}\left( 0,T;Y\right) \left\vert \underset{0}{%
\overset{T}{\int }}\left\vert \langle y\left( t\right) ,Lx\left( t\right)
\rangle \right\vert ~dt\leq \underset{0}{\overset{T}{\int }}\langle
f(t,x\left( t\right) ),Lx\left( t\right) \rangle dt\right. -c,\right.
\end{equation*}%
\begin{equation*}
\left. for\text{ }all\text{ }x\in L_{p}\left( 0,T;X_{0}\right) ,\ \left[ x%
\right] _{L_{p}\left( 0,T;\mathcal{M}_{0}\right) }=r\right\} ,\
C_{2}<c<\infty
\end{equation*}%
the Cauchy problem 
\begin{equation}
\frac{dx}{dt}+f(t,x\left( t\right) )=y\left( t\right) ,\quad y\in
L_{q}\left( 0,T;Y\right) ;\quad x\left( 0\right) =0  \label{Eqn 2.4}
\end{equation}

is solvable in $\underset{0}{\mathbf{P}}$\/$_{1,p,q}\left( 0,T;\mathcal{M}%
_{0},Y\right) $ in the following sense 
\begin{equation*}
\underset{0}{\overset{T}{\int }}\left\langle \frac{dx}{dt}+f(t,x\left(
t\right) ),y^{\ast }\left( t\right) \right\rangle dt=\underset{0}{\overset{T}%
{\int }}\left\langle y\left( t\right) ,y^{\ast }\left( t\right)
\right\rangle dt,\quad \forall y^{\ast }\in L_{q\prime }\left( 0,T;Y^{\ast
}\right) .
\end{equation*}
\end{theorem}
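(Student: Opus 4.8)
The plan is to prove solvability by a Galerkin approximation adapted to the operator $L$, to close uniform a priori bounds by combining the coercivity conditions (iii)--(iv) with the defining property of $G_r$, and to pass to the limit using the weak continuity of $f$ together with the reflexivity of $Y$ and the weak completeness of $\mathcal{M}_0$.

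First I would fix a countable set $\{w_j\}_{j\ge 1}\subset X_0$ whose linear span is dense in $X_0$; since $\overline{X_0}^{\mathcal M_0}=\mathcal M_0$ and $\overline{X_0}^{Y}=Y$, this span is dense in both $\mathcal M_0$ and $Y$. I look for approximants
\[
x_m(t)=\sum_{j=1}^m c_{jm}(t)\,w_j,\qquad x_m(0)=0,
\]
solving the finite system obtained by pairing the equation with $Lw_k$,
\[
\Big\langle \tfrac{dx_m}{dt}+f\big(t,x_m(t)\big),\,Lw_k\Big\rangle=\big\langle y(t),Lw_k\big\rangle,\qquad k=1,\dots,m.
\]
Because $f$ is weakly continuous, this is a Carath\'{e}odory system of ordinary differential equations for $(c_{1m},\dots,c_{mm})$, whose local solvability follows once the matrix $\big(\langle w_j,Lw_k\rangle\big)_{j,k}$ is nondegenerate; I would arrange this by a suitable choice of the $w_j$, the density of $\mathrm{range}\,L$ (equivalent to $\ker L^{\ast}=\{0\}$) making such a choice possible.

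Next I would derive the uniform estimates. Testing the approximate equation with $Lx_m$, which is admissible since $x_m\in\mathrm{span}\{w_j\}$, gives the energy balance
\[
\int_0^T\!\Big\langle \tfrac{dx_m}{d\tau},Lx_m\Big\rangle d\tau+\int_0^T\!\big\langle f(\tau,x_m),Lx_m\big\rangle d\tau=\int_0^T\!\langle y,Lx_m\rangle\,d\tau.
\]
By (iv) the first term is bounded below by $-C_2$, and by (iii) the second is at least $\varphi\big([x_m]_{L_p(\mathcal M_0)}\big)[x_m]_{L_p(\mathcal M_0)}$. The decisive point is the membership $y\in G_{r_0}$ for some $r_0\ge\tau_1$: if the level $[x_m]_{L_p(0,T;\mathcal M_0)}=r_0$ were attained, the defining inequality of $G_{r_0}$ together with the balance above would force the left-hand side to be at most $-c<-C_2$, contradicting its lower bound. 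A continuity argument starting from $x_m(0)=0$ then yields the uniform bound $[x_m]_{L_p(0,T;\mathcal M_0)}\le r_0$. Since $\tfrac{dx_m}{dt}=y-f(t,x_m)$ and the weakly compact map $f$ sends bounded sets into bounded sets of $L_q(0,T;Y)$, this also bounds $\tfrac{dx_m}{dt}$ in $L_q(0,T;Y)$, so $\{x_m\}$ is bounded in $L_p(0,T;\mathcal M_0)\cap W^1_q(0,T;Y)$, while (iv) in addition controls $\|L_0 x_m(t)\|_Y$ uniformly in $t$.

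Finally I would pass to the limit. Using the weak completeness (``reflexivity'') of the pn-space $\mathcal M_0$ and the reflexivity of $Y$, I extract a subsequence with $x_m\rightharpoonup x$ in $L_p(0,T;\mathcal M_0)$ and $\tfrac{dx_m}{dt}\rightharpoonup \tfrac{dx}{dt}$ in $L_q(0,T;Y)$, so that $x(0)=0$ is preserved in the trace. Condition (i) then gives $f(t,x_m)\rightharpoonup f(t,x)$ in $L_q(0,T;Y)$, which lets me pass to the limit in each Galerkin identity and obtain
\[
\int_0^T\Big\langle \tfrac{dx}{dt}+f(t,x),\,\psi(t)\,Lw_k\Big\rangle dt=\int_0^T\langle y,\psi\,Lw_k\rangle\,dt
\]
for all scalar $\psi$ and all $k$. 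Because $\ker L^{\ast}=\{0\}$ forces $\overline{\mathrm{range}\,L}=Y^{\ast}$, the functions $\{Lw_k\}$ are dense in $Y^{\ast}$, and the identity extends to every $y^{\ast}\in L_{q'}(0,T;Y^{\ast})$, which is exactly the asserted weak formulation. The main obstacle I anticipate is the a priori estimate: since $f$ is only weakly continuous, with no monotonicity assumed, the bound cannot be produced by a Minty--Browder argument and must instead be extracted from the delicate interplay of (iii), (iv) and the level-set structure of $G_{r_0}$, while simultaneously justifying $Lx_m$ as an admissible test element and securing nondegeneracy of the $L$-weighted Galerkin system.
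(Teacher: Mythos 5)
Your a~priori estimate and limit passage are essentially sound: the level-crossing argument (restrict to $[0,t^{*}]$, extend by zero, and apply the $G_{r_0}$ inequality on the sphere $[x]_{L_p(0,T;\mathcal{M}_0)}=r_0$ against the lower bound $-C_2$ from (iv)) is a correct way to exploit the level-set structure of $G$, and density of $\mathrm{span}\{Lw_k\}$ in $Y^{\ast}$ (from $\ker L^{\ast}=\{0\}$ and reflexivity of $Y$) correctly upgrades the limit identity to all $y^{\ast}\in L_{q'}(0,T;Y^{\ast})$. The genuine gap is the very first step, the existence of your approximants. You treat the finite-dimensional system as a Carath\'eodory ODE system for the coefficients $c_{jm}(t)$, but the hypotheses do not support this, for two separate reasons. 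First, condition (i) concerns the operator $x(\cdot)\mapsto f(\cdot,x(\cdot))$ acting between function spaces over $(0,T)$: weak convergence in $\mathbf{P}_{1,p,q}$ implies weak convergence of the images in $L_q(0,T;Y)$. This gives no pointwise-in-$t$ continuity of $c\mapsto\langle f(t,\textstyle\sum_j c_j w_j),Lw_k\rangle$, which is what a Carath\'eodory existence theorem requires, nor the pointwise-in-$t$ bounds needed to continue a local solution to all of $[0,T]$. Second, the nondegeneracy of the matrices $\bigl(\langle w_j,Lw_k\rangle\bigr)_{j,k\le m}$ cannot in general be ``arranged by a suitable choice of the $w_j$'': nothing in (i)--(iv) prevents the bilinear form $(u,v)\mapsto\langle u,Lv\rangle$ from being antisymmetric (conditions (iii)--(iv) only bound certain time-integrated quantities from below and are compatible with $\langle u,Lu\rangle\equiv 0$, for instance with $L_0=0$), and for an antisymmetric form every odd-order section matrix is singular no matter which $w_j$ are chosen. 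Density of $\mathrm{range}\,L$ in $Y^{\ast}$ does not repair this. So the crux of the proof --- that the Galerkin approximants exist at all --- is unsupported under the stated abstract conditions.

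This is precisely the difficulty the paper's proof is designed to avoid. The paper (Appendix C) does not run a spatial Galerkin scheme with an ODE in time; it first performs an elliptic regularization, adding $-\varepsilon\,d^{2}x_{\varepsilon}/dt^{2}$ with $x_{\varepsilon}(0)=0$ and $dx_{\varepsilon}/dt|_{t=T}=0$, and solves the regularized problem by the stationary existence theorem of Appendix B. There the Galerkin scheme is a full space--time one, over the system $\{\theta^{s}(t)x^{k}\}$, and the finite systems are \emph{algebraic}, solved by the ``acute angle'' lemma (Brouwer's theorem): this needs only continuity of finitely many functionals and a sign condition on a sphere, i.e.\ exactly the integrated conditions (i)--(iv), with no matrix inversion and no pointwise-in-time regularity of $f$. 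The uniform-in-$\varepsilon$ control of $dx_{\varepsilon}/dt$ in $L_q(0,T;Y)$ is then extracted from the explicit kernel representation of the solution of $-\varepsilon z''+z'=f_{0\varepsilon}$ via the generalized Minkowski inequality, after which one passes to the limit $\varepsilon\searrow 0$ and uses $\ker L^{\ast}=\{0\}$. If you want to keep your direct route, you must add structural hypotheses beyond Theorem \ref{Theorem 2.2}: a definite sign of $\langle u,Lu\rangle$ (so the Gram-type matrices are invertible) and genuine Carath\'eodory regularity of $f(t,\cdot)$; both hold in the concrete application of Section 4 (where $L=-\Delta$ and $\langle u,-\Delta u\rangle=\|\nabla u\|_{L_2}^{2}$), but they are not among the paper's abstract conditions.
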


The proof of this result is presented in Appendix C (one can also refer to
proofs of similar theorems in \cite{S3, S5}). The next proposition follows
immediately from the last theorem.

\begin{corollary}
\label{Corollary 2.1}Under assumptions of Theorem \ref{Theorem 2.2} the
problem (\ref{Eqn 2.4}) is solvable in $\underset{0}{\mathbf{P}}$\/$%
_{1,p,q}\left( 0,T;\mathcal{M}_{0},Y\right) $ for any $y\in L_{q}\left(
0,T;Y\right) $ satisfying the condition: there is $r>0$ such that the
inequality 
\begin{equation*}
\left\Vert y\right\Vert _{L_{q}\left( 0,T;Y\right) }\leq \varphi \left(
\lbrack x]_{L_{p}\left( 0,T;\mathcal{M}_{0}\right) }\right)
\end{equation*}%
holds for any $x\in L_{p}\left( 0,T;X_{0}\right) $ with $[x]_{L_{p}\left( 
\mathcal{M}_{0}\right) }=r$. Furthermore, if $\varphi \left( \tau \right)
\nearrow \infty $ as $\tau \nearrow \infty $ then the problem (\ref{Eqn 2.4}%
) is solvable in $\underset{0}{\mathbf{P}}$\/$_{1,p,q}\left( 0,T;\mathcal{M}%
_{0},Y\right) $ for any $y\in L_{q}\left( 0,T;Y\right) $ satisfying the
inequality 
\begin{equation*}
\sup \left\{ \frac{1}{[x]_{L_{p}\left( 0,T;\mathcal{M}_{0}\right) }}\underset%
{0}{\overset{T}{\int }}\langle y\left( t\right) ,Lx\left( t\right) \rangle
~dt\ \left\vert \ x\in L_{p}\left( 0,T;X_{0}\right) \right. \right\} <\infty
.
\end{equation*}
\end{corollary}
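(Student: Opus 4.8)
The plan is to deduce the corollary directly from Theorem~\ref{Theorem 2.2} by showing that each of the two hypotheses on $y$ forces $y$ into the set $G=\underset{r\geq \tau _{1}}{\cup }G_{r}$; once membership $y\in G_{r}$ is verified for a single admissible radius $r\geq \tau _{1}$ (with some constant $c$, $C_{2}<c<\infty$), the solvability of the Cauchy problem \eqref{Eqn 2.4} in $\underset{0}{\mathbf{P}}\/_{1,p,q}\left( 0,T;\mathcal{M}_{0},Y\right)$ is immediate. The only work is therefore to establish, for a suitable pair $(r,c)$, the defining inequality of $G_{r}$,
\[
\underset{0}{\overset{T}{\int }}\left\vert \langle y\left( t\right) ,Lx\left( t\right) \rangle \right\vert \,dt\leq \underset{0}{\overset{T}{\int }}\langle f(t,x\left( t\right) ),Lx\left( t\right) \rangle \,dt-c
\]
for every $x\in L_{p}\left( 0,T;X_{0}\right)$ with $[x]_{L_{p}\left( \mathcal{M}_{0}\right) }=r$.

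For the first assertion I would take the radius $r$ furnished by the hypothesis and estimate the two sides separately. Condition (iii) bounds the right side from below: for every such $x$ one has $\int_{0}^{T}\langle f(t,x),Lx\rangle \,dt\geq \varphi \left( r\right) r$. The left side I would control by the duality pairing of $Y$ with $Y^{\ast }$ together with H\"{o}lder's inequality in $t$ (using $\tfrac1q+\tfrac1{q^{\prime }}=1$ and the embedding $L_{p}\hookrightarrow L_{q^{\prime }}$ on the bounded interval $(0,T)$ afforded by $q^{\prime }\leq p$),
\[
\underset{0}{\overset{T}{\int }}\left\vert \langle y,Lx\rangle \right\vert \,dt\leq \left\Vert y\right\Vert _{L_{q}\left( 0,T;Y\right) }\left\Vert Lx\right\Vert _{L_{q^{\prime }}\left( 0,T;Y^{\ast }\right) }\leq \varphi \left( r\right) \left\Vert Lx\right\Vert _{L_{q^{\prime }}\left( 0,T;Y^{\ast }\right) },
\]
where the continuity of $L$ and the normalization of $L_{p}\left( 0,T;\mathcal{M}_{0}\right)$ let me bound $\left\Vert Lx\right\Vert _{L_{q^{\prime }}\left( Y^{\ast }\right) }$ by $[x]_{L_{p}\left( \mathcal{M}_{0}\right) }=r$. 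Comparing the two estimates gives $\int_{0}^{T}\left\vert \langle y,Lx\rangle \right\vert \leq \int_{0}^{T}\langle f,Lx\rangle$; the strict margin $c>C_{2}$ I would extract from the slack between the coercivity estimate (iii) and the hypothesis bound, passing if necessary to a slightly larger radius $r^{\prime }\geq r$ and using the monotonicity of $\varphi$ for $\tau \geq \tau _{0}$ together with $\varphi \left( \tau _{1}\right) >0$, so that $\left( \varphi \left( r^{\prime }\right) -\varphi \left( r\right) \right) r^{\prime }\geq c$. This places $y$ in $G_{r^{\prime }}$ and Theorem~\ref{Theorem 2.2} applies.

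For the second assertion, set $M:=\sup \{[x]_{L_{p}\left( \mathcal{M}_{0}\right) }^{-1}\int _{0}^{T}\langle y,Lx\rangle \,dt\}<\infty$, so that $\int_{0}^{T}\langle y,Lx\rangle \,dt\leq M\,[x]_{L_{p}\left( \mathcal{M}_{0}\right) }$ for all $x$; replacing $x$ by $-x$ and invoking the linearity of $L$ yields the same bound for the absolute value of the integral. Since $\varphi \left( \tau \right) \nearrow \infty$, I would choose $r\geq \tau _{1}$ so large that $\varphi \left( r\right) r-Mr\geq c$ for some $c\in \left( C_{2},\infty \right)$; combined with the lower bound $\int_{0}^{T}\langle f,Lx\rangle \,dt\geq \varphi \left( r\right) r$ from (iii), this gives the defining inequality of $G_{r}$, hence $y\in G$ and the claim again follows from Theorem~\ref{Theorem 2.2}.

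The main obstacle I expect is the passage between the pointwise absolute value $\int_{0}^{T}\left\vert \langle y,Lx\rangle \right\vert \,dt$ appearing in the definition of $G_{r}$ and the signed, linear quantities $\left\Vert y\right\Vert _{L_{q}\left( 0,T;Y\right) }$ and $\int_{0}^{T}\langle y,Lx\rangle \,dt$ that the two hypotheses actually control: one must justify the duality estimate for $\left\Vert Lx\right\Vert _{L_{q^{\prime }}\left( 0,T;Y^{\ast }\right) }$ in terms of $[x]_{L_{p}\left( 0,T;\mathcal{M}_{0}\right) }$ and reconcile the sign in the supremum condition with the absolute value via the linearity of $L$. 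Once these are secured, the selection of the pair $(r,c)$ with $r\geq \tau _{1}$ and $C_{2}<c<\infty$ is routine bookkeeping, the growth (respectively strict positivity and monotonicity) of $\varphi$ supplying exactly the slack required.
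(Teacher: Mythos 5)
Your overall route --- show that each hypothesis on $y$ forces $y\in G=\underset{r\geq \tau _{1}}{\cup }G_{r}$ and then quote Theorem \ref{Theorem 2.2} --- is exactly the paper's intent: its entire proof is the remark that the corollary ``follows immediately from the last theorem.'' So the approach is not in question; the execution is. The pivotal step in your first assertion, the bound $\left\Vert Lx\right\Vert _{L_{q^{\prime }}\left( 0,T;Y^{\ast }\right) }\leq \left[ x\right] _{L_{p}\left( 0,T;\mathcal{M}_{0}\right) }$, is not justified by anything in conditions (i)--(iv): condition (ii) makes $L$ continuous only from $W_{p_{0}}^{s}\left( 0,T;X_{0}\right) $ into $W_{p_{0}}^{s}\left( 0,T;Y^{\ast }\right) $, and the pn-functional $\left[ \cdot \right] _{\mathcal{M}_{0}}$ is not a norm on $X_{0}$ that dominates $\left\Vert L\cdot \right\Vert _{Y^{\ast }}$; no ``normalization'' of the kind you invoke appears in the hypotheses. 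Worse, in the paper's own application ($\mathcal{M}_{0}=\overset{0}{S}_{\Delta ,\rho ,2}\left( \Omega \right) $, $L=-\Delta $, $Y^{\ast }=L_{\rho +2}\left( \Omega \right) $, where indeed $q^{\prime }=p=\rho +2$) the inequality is false: for $u_{k}=\varepsilon \sin \left( kx_{1}\right) $ times a fixed cutoff one has $\left\Vert \Delta u_{k}\right\Vert _{L_{p}}\sim \varepsilon k^{2}$ while $\left[ u_{k}\right] _{S_{\Delta ,\rho ,2}}\sim \varepsilon k^{4/\left( \rho +2\right) }$, so the ratio grows like $k^{2\rho /\left( \rho +2\right) }$. (In Section 4 the analogous estimate for $h$ comes from integration by parts plus the embedding theorems of Section 3, e.g.\ Corollary \ref{Corollary 3.4}, not from this duality bound.) Separately, your margin extraction fails in the first assertion: there $\varphi \nearrow \infty $ is \emph{not} assumed, $\varphi $ may be constant on $\left[ \tau _{0},\infty \right) $, and then $\left( \varphi \left( r^{\prime }\right) -\varphi \left( r\right) \right) r^{\prime }\geq c>C_{2}\geq 0$ is unobtainable for every $r^{\prime }\geq r\geq \tau _{0}$.

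In the second assertion there is a different gap: membership in $G_{r}$ requires a bound on $\underset{0}{\overset{T}{\int }}\left\vert \langle y\left( t\right) ,Lx\left( t\right) \rangle \right\vert \,dt$ with the absolute value \emph{inside} the time integral, whereas the substitution $x\mapsto -x$ controls only $\left\vert \underset{0}{\overset{T}{\int }}\langle y,Lx\rangle \,dt\right\vert $, which is smaller. The repair is a time-dependent sign: put $\sigma \left( t\right) =\mathrm{sign}\,\langle y\left( t\right) ,Lx\left( t\right) \rangle $ and $\widetilde{x}\left( t\right) =\sigma \left( t\right) x\left( t\right) $; then $\widetilde{x}\in L_{p}\left( 0,T;X_{0}\right) $, $\underset{0}{\overset{T}{\int }}\langle y,L\widetilde{x}\rangle \,dt=\underset{0}{\overset{T}{\int }}\left\vert \langle y,Lx\rangle \right\vert \,dt$, and --- provided $\left[ -u\right] _{\mathcal{M}_{0}}=\left[ u\right] _{\mathcal{M}_{0}}$, true in the applications though not itself an axiom of pn-spaces --- $\left[ \widetilde{x}\right] _{L_{p}\left( \mathcal{M}_{0}\right) }=\left[ x\right] _{L_{p}\left( \mathcal{M}_{0}\right) }$, so your constant $M$ gives $\underset{0}{\overset{T}{\int }}\left\vert \langle y,Lx\rangle \right\vert \,dt\leq M\left[ x\right] _{L_{p}\left( \mathcal{M}_{0}\right) }$. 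With that corrected bound, choosing $c=C_{2}+1$ and $r\geq \max \left\{ \tau _{1},C_{2}+1\right\} $ with $\varphi \left( r\right) \geq M+1$ (possible since $\varphi \nearrow \infty $) does place $y$ in $G_{r}$, so the second half of your argument is fixable. The first half, however, rests on an estimate that is unavailable in general and false in the motivating example; that estimate is precisely the content which the paper's ``follows immediately'' glosses over.
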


\section{Embedding Theorems on pn-Spaces}

In this section we introduce and investigate properties of a class of
nonlinear function spaces (pn-spaces) that are connected to the considered
problem directly. These spaces are necessary in application of Theorem \ref%
{Theorem 2.2} (and Corollary \ref{Corollary 2.1}) to the considered problem.

Consider the following function spaces (class of functions $u:\Omega
\longrightarrow R$) 
\begin{equation}
S_{1,\alpha ,\beta }\left( \Omega \right) \equiv \left\{ u\in L_{1}\left(
\Omega \right) ~\left\vert ~\left[ u\right] _{S_{1}}^{\alpha +\beta }\equiv 
\underset{\Omega }{\int }~\left[ \left\vert u\right\vert ^{\alpha +\beta
}+\left\vert u\right\vert ^{\alpha }\left\vert \nabla u\right\vert ^{\beta }%
\right] dx<\infty ,\right. \right\} ,  \label{Eqn 3.1}
\end{equation}%
\begin{equation}
S_{\Delta ,\alpha ,\beta }\left( \Omega \right) \equiv \left\{ u\in
L_{1}\left( \Omega \right) ~\left\vert ~\left[ u\right] _{S_{\Delta
}}^{\alpha +\beta }\equiv \left[ u\right] _{S_{1}}^{\alpha _{1}+\beta _{1}}+%
\underset{\Omega }{\int }~\left\vert u\right\vert ^{\alpha }\left\vert
\Delta u\right\vert ^{\beta }dx<\infty ,\right. \right\} ,  \label{Eqn 3.2}
\end{equation}%
where $\alpha \geq 0,$ $\frac{\alpha _{1}}{\beta _{1}}>-1$, $\beta ,\beta
_{1}\geq 1$ and $\alpha _{1}+\beta _{1}=\alpha +\beta $. Here and hereafter
we assume $\beta >1$. Further, we consider the case $\frac{\alpha }{\beta }%
>-1$, $\beta >1$, $\alpha >\beta -1$, as well.

Also, consider the following spaces of functions $u:Q_{T}\longrightarrow
R^{1}$ 
\begin{equation}
L_{p}\left( 0,T;S_{1,\alpha ,\beta }\left( \Omega \right) \right) \equiv
\left\{ u\in L_{1}\left( \Omega \right) ~\left\vert ~\left[ u\right]
_{L\left( S_{1}\right) }^{p}\equiv \underset{0}{\overset{T}{\int }}~\left[ u%
\right] _{S_{1}}^{p}dt<\infty ,\right. \right\} ,  \label{Eqn 3.3}
\end{equation}%
\begin{equation}
P_{p_{0},p_{1}}\left( 0,T;S_{\Delta ,\alpha ,\beta }\left( \Omega \right)
;X\right) \equiv W_{p_{0}}^{1}\left( 0,T;X\right) \cap L_{p_{1}}\left(
0,T;S_{\Delta ,\alpha ,\beta }\left( \Omega \right) \right) ,\quad
\label{Eqn 3.4}
\end{equation}%
where $p,p_{0},p_{1},\beta >1$, $\alpha \geq 0$ and $X$ is a Banach space.
Particularly, $X$ can be choosen in such a way that $L_{p_{0}}\left( \Omega
\right) \subseteq X$ for some $p_{0}\geq 1$.

The space $L_{p_{1}}\left( 0,T;S_{\Delta ,\alpha ,\beta }\left( \Omega
\right) \right) $ is defined as $L_{p}\left( 0,T;S_{1,\alpha ,\beta }\left(
\Omega \right) \right) $ by using (\ref{Eqn 3.2}) instead of (\ref{Eqn 3.1}).

The equivalency 
\begin{equation*}
\mathcal{M}_{\eta ,W_{\beta }^{1}\left( \Omega \right) }\equiv \left\{ u\in
L_{1}\left( \Omega \right) ~\left\vert ~\eta \left( u\right) \in W_{\beta
}^{1}\left( \Omega \right) ,\ \eta \left( u\right) \equiv \left\vert
u\right\vert ^{\frac{\alpha }{\beta }}u\right. \right\} \equiv S_{1,\alpha
,\beta }\left( \Omega \right)
\end{equation*}%
that express relations between $W_{\beta }^{1}\left( \Omega \right) $ and $%
S_{1,\alpha ,\beta }\left( \Omega \right) $\ follows immediately from (\ref%
{Eqn 3.1}). Indeed, it is enough to note that $\eta \left( u\right) \equiv
\left\vert u\right\vert ^{\frac{\alpha }{\beta }}u=v$ $\Longleftrightarrow
u=\left\vert v\right\vert ^{\frac{-\alpha }{\alpha +\beta }}v\equiv \eta
^{-1}\left( v\right) $.

Taking the last equivalency and definition (\ref{Eqn 3.2}) of the space $%
S_{\Delta ,\alpha ,\beta }\left( \Omega \right) $ into account we get

\begin{equation}
S_{\Delta ,\alpha ,\beta }\left( \Omega \right) \equiv \mathcal{M}_{\eta
,W_{\beta _{1}}^{1}\left( \Omega \right) }\cap \left\{ u~\left\vert \
\left\vert u\right\vert ^{\frac{\alpha }{\beta }}\Delta u\in L_{\beta
}\left( \Omega \right) ,\right. ~\right\} .  \label{Eqn 3.5}
\end{equation}

In our next step we are going to express the relations between the second
order Sobolev spaces and $S_{\Delta ,\alpha ,\beta }\left( \Omega \right) .$
To this end we use a few auxilary results.

The following equality will be used in our discussion. Let's put $\eta
\left( u\right) \equiv \left\vert u\right\vert ^{\frac{\alpha }{\beta }%
}u\equiv v.$ Then 
\begin{equation*}
\Delta v\equiv \left( \Delta \circ ~\eta \right) \left( u\right) \equiv
\Delta \eta \left( u\right) \equiv \Delta \left( \left\vert u\right\vert ^{%
\frac{\alpha }{\beta }}u\right) =\nabla \cdot \left( \frac{\alpha +\beta }{%
\beta }~\left\vert u\right\vert ^{\frac{\alpha }{\beta }}~\nabla u\right) =
\end{equation*}%
\begin{equation}
=\frac{\alpha +\beta }{\beta }\left\vert u\right\vert ^{\frac{\alpha }{\beta 
}}~\Delta u+\frac{\alpha \left( \alpha +\beta \right) }{\beta ^{2}}%
~\left\vert u\right\vert ^{\frac{\alpha }{\beta }-2}~u~\left\vert \nabla
u\right\vert ^{2}.  \label{Eqn 3.6}
\end{equation}

\begin{proposition}
\label{Proposition 3.1}Let $\alpha >-1,\beta \geq \beta _{0}\geq 0$, $\beta
\geq 1$ be some numbers, $\beta _{0}+\beta \geq 2$ and $\Omega \subset
R^{n}, $ $n\geq 1,$ be a bounded domain with sufficiently smooth boundary $%
\partial \Omega $. Then the inequality 
\begin{equation}
\underset{\Omega }{\int }\left\vert u\right\vert ^{\alpha }\left\vert \nabla
u\right\vert ^{\beta _{0}+\beta }dx\leq c\left( \varepsilon \right) \underset%
{i=1}{\overset{n}{\sum }}~\underset{\Omega }{\int }\left\vert u\right\vert
^{\alpha +\beta _{0}}\left\vert D_{i}^{2}u\right\vert ^{\beta
}dx+\varepsilon _{1}\kappa \left( \beta -\beta _{0}\right) \underset{\Omega }%
{\int }\left\vert u\right\vert ^{\alpha +\beta _{0}+\beta }dx
\label{Eqn 3.7}
\end{equation}%
holds for any $u\in C^{2}\left( \Omega \right) \cap C_{0}^{1}\left( 
\overline{\Omega }\right) $, where $\varepsilon >0$, $\varepsilon
_{1}=\varepsilon _{1}\left( \varepsilon \right) $ are some numbers, $\kappa
\left( s\right) =1$ if $s>0$, and $\kappa \left( s\right) =0$ if $s=0$.
\end{proposition}

\begin{proof}
We have 
\begin{equation*}
\underset{\Omega }{\int }\left\vert u\right\vert ^{\alpha }\left\vert \nabla
u\right\vert ^{\beta _{0}+\beta }dx\leq c\underset{i=1}{\overset{n}{\sum }}%
\underset{\Omega }{\int }\left\vert u\right\vert ^{\alpha }\left\vert
D_{i}u\right\vert ^{\beta _{0}+\beta }dx=
\end{equation*}%
\begin{equation*}
-c_{1}\underset{i=1}{\overset{n}{\sum }}~\underset{\Omega }{\int }\left\vert
u\right\vert ^{\alpha }u\left\vert D_{i}u\right\vert ^{\beta _{0}+\beta
-2}D_{j}^{2}udx\leq
\end{equation*}%
Rewriting the expression under the integral in the following form

\begin{equation*}
\left( \left\vert u\right\vert ^{\alpha -\frac{\alpha +\beta _{0}}{\beta }%
-\alpha \frac{\beta _{0}+\beta -2}{\beta _{0}+\beta }}u\right) \left(
\left\vert u\right\vert ^{\alpha \frac{\beta _{0}+\beta -2}{\beta _{0}+\beta 
}}\left\vert D_{i}u\right\vert ^{\beta +\beta _{0}-2}\right) \left(
\left\vert u\right\vert ^{\frac{\alpha +\beta _{0}}{\beta }%
}D_{i}^{2}u\right) \text{ if\ }\beta >\beta _{0}
\end{equation*}

or 
\begin{equation*}
\left( \left\vert u\right\vert ^{\frac{\alpha }{\beta ^{\prime }}%
}u\left\vert D_{i}u\right\vert ^{2\beta -2}\right) \left( \left\vert
u\right\vert ^{\frac{\alpha }{\beta }}D_{i}^{2}u\right) \text{ if\ }\beta
=\beta _{0}
\end{equation*}

and applying Young's inequality with exponents

\begin{equation*}
p_{0}=\frac{\beta \left( \beta _{0}+\beta \right) }{\beta -\beta _{0}},p_{1}=%
\frac{\beta _{0}+\beta }{\beta _{0}+\beta -2},\ p_{2}=\beta \text{ \ if}\
\beta >\beta _{0}
\end{equation*}

or

\begin{equation*}
p_{0}=\beta ^{\prime },p_{1}=\beta \text{ if\ }\beta =\beta _{0}\text{, }%
\frac{1}{\beta }+\frac{1}{\beta ^{\prime }}=1
\end{equation*}

we get%
\begin{equation*}
\leq \varepsilon \underset{i=1}{\overset{n}{\sum }}~\underset{\Omega }{\int }%
\left[ \kappa \left( \beta -\beta _{0}\right) \left\vert u\right\vert
^{\alpha +\beta _{0}+\beta }+\left\vert u\right\vert ^{\alpha }\left\vert
D_{i}u\right\vert ^{\beta _{0}+\beta }\right] dx+
\end{equation*}%
\begin{equation*}
c\left( \varepsilon \right) \underset{i=1}{\overset{n}{\sum }}~\underset{%
\Omega }{\int }\left\vert u\right\vert ^{\alpha +\beta _{0}}\left\vert
D_{i}^{2}u\right\vert ^{\beta }dx\leq
\end{equation*}

or%
\begin{equation*}
\varepsilon _{1}\kappa \left( \beta -\beta _{0}\right) \underset{\Omega }{%
\int }\left\vert u\right\vert ^{\alpha +\beta _{0}+\beta }dx+\varepsilon _{2}%
\underset{\Omega }{\int }\left\vert u\right\vert ^{\alpha }\left\vert \nabla
u\right\vert ^{\beta _{0}+\beta }dx+
\end{equation*}%
\begin{equation}
c_{4}\left( \varepsilon _{1},\varepsilon _{2}\right) \underset{i=1}{\overset{%
n}{\sum }}\underset{\Omega }{\int }\left\vert u\right\vert ^{\alpha +\beta
_{0}}\left\vert D_{i}^{2}u\right\vert ^{\beta }dx  \label{Eqn 3.8}
\end{equation}%
The second term in (\ref{Eqn 3.8}) is obtained by using the equivalency 
\begin{equation}
\underset{\Omega }{\int }\left\vert u\right\vert ^{\alpha }\underset{i=1}{%
\overset{n}{\sum }}\left\vert D_{i}u\right\vert ^{\beta _{0}+\beta }dx\leq 
\underset{\Omega }{\int }\left\vert u\right\vert ^{\alpha }\left\vert \nabla
u\right\vert ^{\beta _{0}+\beta }dx\leq n\underset{\Omega }{\int }\left\vert
u\right\vert ^{\alpha }\underset{i=1}{\overset{n}{\sum }}\left\vert
D_{i}u\right\vert ^{\beta _{0}+\beta }dx.  \label{Eqn 3.9}
\end{equation}%
Note that the first term of (\ref{Eqn 3.8}) vanishes if $\beta =\beta _{0}$.
Obtained inequalities prove the statement of the proposition.
\end{proof}

\begin{remark}
\label{Remark 3.1}It is not difficult to see that if $\alpha +\beta
_{0}+\beta >1$, $\beta _{0}\geq 0$, $\beta _{1}\geq 1$ then 
\begin{equation}
\underset{\Omega }{\int }\left\vert u\right\vert ^{\alpha +\beta _{0}+\beta
}dx\leq c\underset{\Omega }{\int }\left\vert u\right\vert ^{\alpha +\beta
_{0}}\left\vert \nabla u\right\vert ^{\beta }dx\text{ \ or }\underset{\Omega 
}{\int }\left\vert u\right\vert ^{\alpha +\beta _{0}+\beta }dx\leq c\underset%
{\Omega }{\int }\left\vert u\right\vert ^{\alpha }\left\vert \nabla
u\right\vert ^{\beta _{0}+\beta }dx  \label{Eqn 3.10}
\end{equation}%
and if $1\leq \alpha _{0}+\beta _{0}\leq \alpha _{1}+\beta _{1}$, $1\leq
\beta _{0}\leq \beta _{1}$, $\alpha _{0}\beta _{1}\geq \alpha _{1}\beta _{0}$
then%
\begin{equation}
\underset{\Omega }{\int }\left\vert u\right\vert ^{\alpha _{0}}\left\vert
\nabla u\right\vert ^{\beta _{0}}dx\leq c\underset{\Omega }{\int }\left\vert
u\right\vert ^{\alpha _{1}}\left\vert \nabla u\right\vert ^{\beta
_{1}}dx+c_{1}  \label{Eqn 3.11}
\end{equation}%
hold for any $u\in C_{0}^{1}\left( \Omega \right) $, where 
\begin{equation*}
c=c\left( \alpha ,\beta _{0},\beta ,mes\ \Omega \right) >0,\
c_{1}=c_{1}\left( \alpha _{0},\beta _{0},\alpha _{1},\beta _{1},mes\ \Omega
\right) \geq 0,
\end{equation*}%
Moreover, if $\alpha _{0}+\beta _{0}=\alpha _{1}+\beta _{1}$ then $c_{1}=0$.
\end{remark}

\begin{proposition}
\label{Proposition 3.2}Let $\alpha >-1$, $\beta \geq 1$ be some numbers, $%
\alpha +\beta \geq 2$ and $\Omega \subset R^{n},$ $n\geq 1,$ be a bounded
domain with sufficiently smooth boundary $\partial \Omega $. Then the
inequality 
\begin{equation}
\underset{\Omega }{\int }\left\vert u\right\vert ^{\alpha +\beta }dx\leq c%
\underset{\Omega }{\int }\left\vert u\right\vert ^{\alpha }\left\vert \Delta
u\right\vert ^{\beta }dx.  \label{Eqn 3.12}
\end{equation}

holds for any $u\in C^{2}\left( \Omega \right) \cap C_{0}^{1}\left( 
\overline{\Omega }\right) $, where $c=c\left( \alpha ,\beta ,mes\ \Omega
\right) >0$.
\end{proposition}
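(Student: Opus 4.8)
The plan is to linearize the weight by the substitution $v=\eta(u)\equiv|u|^{\alpha/\beta}u$ used before (\ref{Eqn 3.5}). Since $|v|^{\beta}=|u|^{\alpha+\beta}$, the left-hand side of (\ref{Eqn 3.12}) is precisely $\int_{\Omega}|v|^{\beta}\,dx$, while the second-order identity (\ref{Eqn 3.6}) gives
\[
\Delta v=\tfrac{\alpha+\beta}{\beta}\,|u|^{\alpha/\beta}\Delta u+\tfrac{\alpha(\alpha+\beta)}{\beta^{2}}\,|u|^{\alpha/\beta-2}u\,|\nabla u|^{2}.
\]
Because $u\in C^{2}(\Omega)\cap C_{0}^{1}(\overline{\Omega})$ makes $v$ vanish on $\partial\Omega$, I would invoke the standard a priori estimate for the Dirichlet Laplacian on a smooth bounded domain, $\int_{\Omega}|v|^{\beta}\,dx\le C\int_{\Omega}|\Delta v|^{\beta}\,dx$ for $1<\beta<\infty$ (after the usual regularization, to cover the points where $u$ vanishes). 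This turns the target into an estimate for $\int_{\Omega}|\Delta v|^{\beta}\,dx$.

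Substituting the displayed form of $\Delta v$ and using $|a+b|^{\beta}\le C(|a|^{\beta}+|b|^{\beta})$ yields
\[
\int_{\Omega}|u|^{\alpha+\beta}\,dx\le C_{1}\int_{\Omega}|u|^{\alpha}|\Delta u|^{\beta}\,dx+C_{2}\int_{\Omega}|u|^{\alpha-\beta}|\nabla u|^{2\beta}\,dx,
\]
in which the first term is already the desired right-hand side of (\ref{Eqn 3.12}). Everything therefore reduces to dominating the remainder $\int_{\Omega}|u|^{\alpha-\beta}|\nabla u|^{2\beta}\,dx$, which is generated solely by the nonlinearity of $\eta$ and vanishes in the linear case $\alpha=0$ (where the proposition is the classical inequality $\Vert u\Vert_{\beta}\le C\Vert\Delta u\Vert_{\beta}$).

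To handle the remainder I would apply Proposition \ref{Proposition 3.1} with the parameter choice ``$\alpha$''$=\alpha-\beta$, $\beta_{0}=\beta$, ``$\beta$''$=\beta$. Then $\beta_{0}+\beta=2\beta$ matches the gradient power and the weight matches since $(\alpha-\beta)+\beta_{0}=\alpha$; moreover $\beta_{0}=\beta$ forces $\kappa(\beta-\beta_{0})=\kappa(0)=0$, so the zeroth-order term in (\ref{Eqn 3.7}) drops out entirely and one is left with
\[
\int_{\Omega}|u|^{\alpha-\beta}|\nabla u|^{2\beta}\,dx\le c(\varepsilon)\sum_{i=1}^{n}\int_{\Omega}|u|^{\alpha}|D_{i}^{2}u|^{\beta}\,dx.
\]
Combined with the previous display, this already closes the argument \emph{provided} one can pass from the sum of second pure derivatives to the Laplacian.

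That last passage, $\sum_{i}\int_{\Omega}|u|^{\alpha}|D_{i}^{2}u|^{\beta}\,dx\le C\int_{\Omega}|u|^{\alpha}|\Delta u|^{\beta}\,dx$, is where I expect the genuine difficulty to lie: for $\alpha=0$ it is exactly the Calder\'on--Zygmund $L^{\beta}$ estimate for the Dirichlet Laplacian, but for $\alpha\neq0$ it is a \emph{weighted} second-order elliptic estimate with the solution-dependent weight $|u|^{\alpha}$, whose validity (and the admissible range of $\alpha,\beta$) is the delicate point. I would also check that the choice ``$\alpha$''$=\alpha-\beta$ respects the hypothesis $\alpha>-1$ of Proposition \ref{Proposition 3.1}, i.e. that one is in the regime $\alpha>\beta-1$ flagged as the ``further case'' after (\ref{Eqn 3.2}); extending to the full stated range $\alpha>-1$, $\alpha+\beta\ge2$, together with justifying the regularity of $v$ and the weighted estimate, is the part of the proof I would scrutinize most carefully.
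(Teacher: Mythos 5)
Your reduction does not close, and the two places where it fails are both essential. First, the step you defer --- $\sum_{i}\int_{\Omega}|u|^{\alpha}|D_{i}^{2}u|^{\beta}\,dx\le C\int_{\Omega}|u|^{\alpha}|\Delta u|^{\beta}\,dx$ --- is not available: for $\alpha\neq 0$ the weight $|u|^{\alpha}$ is a degenerate, solution-dependent weight (not an $A_{p}$ weight uniformly in $u$), so Calder\'on--Zygmund theory gives nothing, and nothing in the paper supplies such a bound; the only results of this flavor (Lemma \ref{Lemma 3.1}, Corollary \ref{Corollary 3.1}) control \emph{gradient} terms, require $\beta>\frac{n}{n-1}$, $n\ge 2$ and (in your normalization) $\alpha-\beta>-1$, and Corollary \ref{Corollary 3.1} is itself deduced \emph{from} Proposition \ref{Proposition 3.2}, so invoking that circle of results here would be circular. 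Second, and worse, your intermediate quantity can be infinite under the stated hypotheses: take $\alpha=\tfrac12$, $\beta=2$ (so $\alpha>-1$, $\alpha+\beta\ge 2$) and $u=x_{1}\phi(x)$ with $\phi$ a smooth bump compactly supported in $\Omega$, so that $u$ vanishes simply on a piece of the hypersurface $\{x_{1}=0\}$. Then $\int_{\Omega}|u|^{\alpha}|\Delta u|^{\beta}dx<\infty$ (the integrand is bounded by $C|x_{1}|^{1/2}$), but $\int_{\Omega}|u|^{\alpha-\beta}|\nabla u|^{2\beta}dx$ behaves like $\int|x_{1}|^{-3/2}dx=+\infty$ near that hypersurface, and the coefficient $\frac{\alpha(\alpha+\beta)}{\beta^{2}}$ of this term in (\ref{Eqn 3.6}) is nonzero. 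Hence $v=|u|^{\alpha/\beta}u\notin W_{\beta}^{2}(\Omega)$, your opening estimate $\|v\|_{L_{\beta}}\le C\|\Delta v\|_{L_{\beta}}$ is applied to a function whose right-hand side is genuinely $+\infty$, and no regularization can repair this: splitting $\Delta v$ by the triangle inequality has manufactured two individually divergent terms out of a finite target. Your own parameter check points the same way: Proposition \ref{Proposition 3.1} with ``$\alpha$''$=\alpha-\beta$ needs $\alpha>\beta-1$, which is strictly stronger than the hypotheses of Proposition \ref{Proposition 3.2}, so even granting the weighted estimate you would prove only a sub-case.

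The paper's own proof avoids all of this and never touches individual second derivatives or the composed function $v$. It writes $\alpha+\beta=\alpha+(\beta-2)+2$ and uses the elementary inequality (\ref{Eqn 3.10}) to get $\int_{\Omega}|u|^{\alpha+\beta}dx\le c\int_{\Omega}|u|^{\alpha+\beta-2}|\nabla u|^{2}dx$; one integration by parts (legitimate since $u\in C_{0}^{1}(\overline{\Omega})$ and $\alpha+\beta\ge 2$) gives $\int_{\Omega}|u|^{\alpha+\beta-2}|\nabla u|^{2}dx=-\frac{1}{\alpha+\beta-1}\int_{\Omega}|u|^{\alpha+\beta-2}u\,\Delta u\,dx\le c\int_{\Omega}|u|^{\alpha+\beta-1}|\Delta u|\,dx$; finally the factorization $|u|^{\alpha+\beta-1}=|u|^{\alpha+\beta-1-\alpha/\beta}\cdot|u|^{\alpha/\beta}$ and Young's inequality with exponents $\left(\beta,\frac{\beta}{\beta-1}\right)$ yield the bound $c(\varepsilon)\int_{\Omega}|u|^{\alpha}|\Delta u|^{\beta}dx+\varepsilon\int_{\Omega}|u|^{\alpha+\beta}dx$, after which the $\varepsilon$-term is absorbed into the left-hand side. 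That argument is self-contained, stays within integrals that are finite whenever the right-hand side of (\ref{Eqn 3.12}) is, and covers the full stated range $\alpha>-1$, $\beta\ge 1$, $\alpha+\beta\ge 2$; it is the route you should take.
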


\begin{proof}
Rewriting $\alpha +\beta $ as $\alpha +\beta -2+2=\alpha +\beta _{0}+\beta
_{1}$ with $\beta _{0}=\beta -2$ and $\beta _{1}=2$ and applying the first
one of inequalities (2.10) we get 
\begin{equation}
\underset{\Omega }{\int }\left\vert u\right\vert ^{\alpha +\beta }dx\equiv 
\underset{\Omega }{\int }\left\vert u\right\vert ^{\alpha +(\beta
-2)+2}dx\leq c\underset{\Omega }{\int }\left\vert u\right\vert ^{\alpha
+\beta -2}\left\vert \nabla u\right\vert ^{2}dx.  \label{Eqn 3.13}
\end{equation}

The right hand side of the last inequality is estimating as 
\begin{equation*}
\underset{\Omega }{\int }\left\vert u\right\vert ^{\alpha +\beta
-2}\left\vert \nabla u\right\vert ^{2}dx=\frac{1}{\alpha +\beta -1}\underset{%
\Omega }{\int }\nabla \left( \left\vert u\right\vert ^{\alpha +\beta
-2}u\right) \cdot \nabla udx=
\end{equation*}%
\begin{equation*}
-\frac{1}{\alpha +\beta -1}\underset{\Omega }{\int }\left\vert u\right\vert
^{\alpha +\beta -2}u\Delta udx\leq c\underset{\Omega }{\int }\left\vert
u\right\vert ^{\alpha +\beta -1}\left\vert \Delta u\right\vert dx=
\end{equation*}

\begin{equation*}
c\underset{\Omega }{\int }\left\vert u\right\vert ^{\alpha +\beta -1-\frac{%
\alpha }{\beta }}\left\vert u\right\vert ^{\frac{\alpha }{\beta }}\left\vert
\Delta u\right\vert dx
\end{equation*}

Now, applying the Young's inequality with exponents $\left( \beta ,\frac{%
\beta }{\beta -1}\right) $ and arbitrary $\varepsilon >0$ gives 
\begin{equation}
\leq c\left( \varepsilon \right) \underset{\Omega }{\int }\left\vert
u\right\vert ^{\alpha }\left\vert \Delta u\right\vert ^{\beta
}dx+\varepsilon \underset{\Omega }{\int }\left\vert u\right\vert ^{\alpha
+\beta }dx.  \label{Eqn 3.14}
\end{equation}

The inequality (\ref{Eqn 3.12}) follows from (\ref{Eqn 3.13}) taking (\ref%
{Eqn 3.14}) into considiration and making $\varepsilon $ sufficiently small.
\end{proof}

The following result is a special case of the main inequality (\ref{Eqn 3.22}%
)\footnote{%
For $n=1$ the similar results to results of this section was proved in the
earlier works (see, for example, [S1, S5]). Therefore, it is enough to
consider just dimension $n\geq 2$.}

\begin{lemma}
\label{Lemma 3.1}Let $\alpha >-1$, $\beta >\frac{n}{n-1}$ be some numbers, $%
\Omega \subset R^{n},$ $n\geq 2,$ be a bounded domain with sufficiently
smooth boundary $\partial \Omega $. Then the inequality 
\begin{equation}
\underset{\Omega }{\int }\left\vert u\right\vert ^{\alpha }\left\vert \nabla
u\right\vert ^{2\beta }dx\leq c_{1}\underset{\Omega }{\int }\left\vert
u\right\vert ^{\alpha +\beta }\left\vert \Delta u\right\vert ^{\beta
}dx+c_{2}\underset{\Omega }{\int }\left\vert u\right\vert ^{\alpha +2\beta
}dx.  \label{Eqn 3.15}
\end{equation}%
holds for any $u\in C^{2}\left( \Omega \right) \cap C_{0}^{1}\left( 
\overline{\Omega }\right) $, where $c=c\left( \alpha ,\beta \right) >0$.
\end{lemma}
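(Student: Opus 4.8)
The plan is to prove (\ref{Eqn 3.15}) in two moves: first trade the high power $2\beta$ of the gradient for second--order derivatives, and then replace those second--order derivatives by the Laplacian alone. The second move is where the genuine difficulty lies. For the first move I would integrate by parts, using $|u|^{\alpha}\nabla u=\tfrac{1}{\alpha+1}\nabla(|u|^{\alpha}u)$ and the fact that $|u|^{\alpha}u$ vanishes on $\partial\Omega$ (here $\alpha+1>0$), to obtain
\[
\int_{\Omega}|u|^{\alpha}|\nabla u|^{2\beta}\,dx=-\frac{1}{\alpha+1}\int_{\Omega}|u|^{\alpha}u\,\nabla\!\cdot\!\bigl(|\nabla u|^{2\beta-2}\nabla u\bigr)\,dx .
\]
Expanding the divergence yields one term carrying $\Delta u$ and one carrying the Hessian $\langle(\mathrm{Hess}\,u)\nabla u,\nabla u\rangle$, both dominated by $\int_{\Omega}|u|^{\alpha+1}|\nabla u|^{2\beta-2}|D^{2}u|\,dx$. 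Writing $|u|^{\alpha+1}=|u|^{(\alpha+\beta)/\beta}\cdot|u|^{\alpha(\beta-1)/\beta}$ and applying Young's inequality with exponents $(\beta,\beta')$, pairing the first factor with $|D^{2}u|$ and the second with $|\nabla u|^{2\beta-2}$, splits off a \emph{small} multiple $\varepsilon\int_{\Omega}|u|^{\alpha}|\nabla u|^{2\beta}\,dx$ together with $c(\varepsilon)\int_{\Omega}|u|^{\alpha+\beta}|D^{2}u|^{\beta}\,dx$. Choosing $\varepsilon$ small and absorbing gives the intermediate estimate $\int_{\Omega}|u|^{\alpha}|\nabla u|^{2\beta}\,dx\le c\int_{\Omega}|u|^{\alpha+\beta}|D^{2}u|^{\beta}\,dx$.

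Equivalently, one may reach this same bound by invoking Proposition \ref{Proposition 3.1} with $\beta_{0}=\beta$: then $\kappa(\beta-\beta_{0})=0$, the lower--order term drops out, and the right--hand side is $c\sum_{i}\int_{\Omega}|u|^{\alpha+\beta}|D_{i}^{2}u|^{\beta}\,dx$ (the hypotheses $\beta\ge1$, $\beta_{0}+\beta\ge2$ hold automatically). Either way I am reduced to estimating a weighted $L_{\beta}$ norm of \emph{all} second derivatives by the corresponding norm of the Laplacian.

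This second move is the main obstacle, and the only place the hypothesis $\beta>\tfrac{n}{n-1}$ should be used. Pointwise $\sum_{i}|D_{ii}u|^{\beta}$ is not controlled by $|\Delta u|^{\beta}$, so a genuine Calder\'on--Zygmund / elliptic--regularity estimate is required. The natural device is the change of variable $v=\eta(u)$ of the type in (\ref{Eqn 3.5})--(\ref{Eqn 3.6}), with the exponent chosen so that $|u|^{(\alpha+\beta)/\beta}\Delta u$ is the principal part of $\Delta v$. By the second--order analogue of (\ref{Eqn 3.6}), $|u|^{(\alpha+\beta)/\beta}D^{2}u$ equals $D^{2}v$ up to a correction of size $|u|^{\alpha/\beta}|\nabla u|^{2}$; one then applies the unweighted inequality $\|D^{2}v\|_{\beta}\le c\|\Delta v\|_{\beta}$, valid for $1<\beta<\infty$ on a smooth bounded domain for $v\in W^{2}_{\beta}(\Omega)$ with $v|_{\partial\Omega}=0$, and translates back via (\ref{Eqn 3.6}). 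This returns $c\int_{\Omega}|u|^{\alpha+\beta}|\Delta u|^{\beta}\,dx$ together with correction integrals of the form $\int_{\Omega}|u|^{\alpha}|\nabla u|^{2\beta}\,dx$ and the lower--order $\int_{\Omega}|u|^{\alpha+2\beta}\,dx$.

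The delicate point, which I expect to be the crux, is that those correction integrals reproduce the left--hand side of (\ref{Eqn 3.15}) with an $O(1)$ coefficient, so a careless argument closes only as $I\le c_{1}\int_{\Omega}|u|^{\alpha+\beta}|\Delta u|^{\beta}\,dx+c_{3}I$ with $c_{3}$ not small, and nothing can be absorbed. Overcoming this is exactly where $\beta>\tfrac{n}{n-1}$ (equivalently $\beta'<n$) is needed: it renders the relevant Sobolev embedding subcritical, so that the correction terms can be interpolated and estimated by a small multiple of $I$ plus the purely lower--order integral $\int_{\Omega}|u|^{\alpha+2\beta}\,dx$. That last integral is controlled in the reverse direction by Proposition \ref{Proposition 3.2} (inequality (\ref{Eqn 3.12})) and by (\ref{Eqn 3.10}) of Remark \ref{Remark 3.1}. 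Carrying out this interpolation, absorbing the small multiple of $I$ into the left--hand side, and collecting the remaining contribution yields (\ref{Eqn 3.15}) with $c_{1},c_{2}$ depending only on $\alpha,\beta,n$ and $\mathrm{mes}\,\Omega$.
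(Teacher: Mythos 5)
Your first move is sound --- it is exactly Proposition \ref{Proposition 3.1} of the paper with $\beta_{0}=\beta$ --- but the second move, passing from $\int_{\Omega}|u|^{\alpha+\beta}|D^{2}u|^{\beta}dx$ to $\int_{\Omega}|u|^{\alpha+\beta}|\Delta u|^{\beta}dx$ via the substitution $v=\eta(u)$ and Calder\'on--Zygmund theory, contains a genuine gap, and it sits precisely at the point you yourself flag as the crux. If $v=|u|^{a}u$, the requirement that $|u|^{(\alpha+\beta)/\beta}\Delta u$ be the principal part of $\Delta v$ forces $a=(\alpha+\beta)/\beta$ (there is no freedom here), and then by (\ref{Eqn 3.6}) the correction term in both $\Delta v$ and $D^{2}v$ has magnitude $|u|^{\alpha/\beta}|\nabla u|^{2}$, whose $\beta$-th power is exactly $|u|^{\alpha}|\nabla u|^{2\beta}$ --- exactly the integrand of the left-hand side $I$ of (\ref{Eqn 3.15}), with the same weight and the same power of the gradient. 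It is not a lower-order quantity in any sense, so no interpolation, subcritical or otherwise, can convert it into $\varepsilon I$ plus $\int_{\Omega}|u|^{\alpha+2\beta}dx$: the chain closes only as $I\leq c_{1}\int_{\Omega}|u|^{\alpha+\beta}|\Delta u|^{\beta}dx+c_{3}I$, where $c_{3}$ is built from the Calder\'on--Zygmund constant and the constants of the integration by parts, none of which is at your disposal. Your claim that $\beta>\frac{n}{n-1}$ ``renders the relevant Sobolev embedding subcritical'' is also off target; that hypothesis plays no Sobolev role in this lemma.

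The paper's proof avoids Hessians and elliptic regularity altogether, and this is where $\beta>\frac{n}{n-1}$ actually enters. One integrates the identity $\nabla\cdot\left(|u|^{\rho}u\nabla u\right)=|u|^{\rho}u\Delta u+(\rho+1)|u|^{\rho}|\nabla u|^{2}$, $\rho=\alpha/\beta$, over small balls $B_{r}(x)$, which expresses the ball average of $|u|^{\rho}|\nabla u|^{2}$ through the ball average of $|u|^{\rho}u\Delta u$ and a sphere integral of $|u|^{\rho+1}\,\partial u/\partial\nu$ --- note that the gradient enters this boundary term only to the \emph{first} power. Then (i) the left side of (\ref{Eqn 3.15}) is split via the $L_{\beta}$-convergence of averages (\ref{Eqn 3.16}) into a term vanishing as $r\searrow0$ plus $I_{2}(r)$; (ii) the Hardy--Littlewood maximal theorem controls the volume part of $I_{2}(r)$; (iii) Stein's spherical maximal theorem --- bounded on $L^{p}$ exactly when $p>\frac{n}{n-1}$, $n\geq2$, which is the true role of your hypothesis --- controls the sphere part by $c\int_{\Omega}\left(|u|^{\rho+1}|\nabla u|\right)^{\beta}dx$ as in (\ref{Eqn 3.18}); and (iv) because this last integrand carries only $|\nabla u|^{\beta}$ rather than $|\nabla u|^{2\beta}$, Young's inequality splits it into $\varepsilon|u|^{\alpha}|\nabla u|^{2\beta}+c(\varepsilon)|u|^{\alpha+2\beta}$ with $\varepsilon$ freely small, so the absorption that is impossible in your scheme succeeds here. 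To repair your argument you would have to replace the Calder\'on--Zygmund step by a device of this kind, i.e.\ one whose error term carries the gradient to a strictly lower power than the quantity being estimated.
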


\begin{proof}
\footnote{%
It should be noted that this approach of the proof is suggested by the
second author.}The proof of the inequality (\ref{Eqn 3.15}) is based on the
boundedness in the Lebesque space $L_{p}(\Omega )$ of the local
Hardy-Littlwood maximal function

\begin{equation*}
M_{\Omega }w\left( x\right) =\underset{0<r<dist(x,\partial \Omega )}{\sup }%
\frac{1}{\left\vert B_{r}\left( x\right) \right\vert }\underset{B_{r}\left(
x\right) }{\int }w~(y)dy;
\end{equation*}%
\begin{equation*}
\left\vert B_{r}\left( x\right) \right\vert \equiv \mu \left( B_{r}\left(
x\right) \right) \equiv \frac{\pi ^{\frac{n}{2}}}{\Gamma \left( \frac{n}{2}%
+1\right) }
\end{equation*}%
when $1<p<+\infty $ (see \cite{St}), the local spherical maximal function 
\begin{equation*}
\left( A_{r}w\right) \left( x\right) =\underset{0<r<dist(x,\partial \Omega )}%
{\sup }\underset{S_{1}\left( 0\right) }{\int }w(x+ry)~dS(y);\
S_{r}(x)=\partial B_{r}(x)
\end{equation*}%
when $p>\frac{n}{n-1},$ $n\geq 2$ (see \cite{St}), and on $L_{p}(\Omega )-$%
convergency of averages of a function to the function itself%
\begin{equation}
\underset{r\searrow 0}{\lim \text{ }}\underset{\Omega }{\int }\left\vert 
\frac{1}{\left\vert B_{r}\left( x\right) \right\vert }\underset{B_{r}\left(
x\right) }{\int }w(y)~dy-w(x)\right\vert ^{p}dx=0  \label{Eqn 3.16}
\end{equation}

Let's put $w(x)\equiv \left\vert u(x)\right\vert ^{\rho }\left\vert \nabla
u(x)\right\vert ^{2}$ for a function $u\in C^{2}\left( \Omega \right) \cap
C_{0}^{1}\left( \overline{\Omega }\right) $. Then, under the conditions of
Propositon \ref{Proposition 3.1} and boundedness of the local
Hardy-Littlwood maximal function, for $\rho =\frac{\alpha }{\beta }$ we have 
\begin{equation}
\underset{\Omega }{\int }\left( \frac{1}{\left\vert B_{r}\left( x\right)
\right\vert }\underset{B_{r}\left( x\right) }{\int }\left\vert u\right\vert
^{\rho }\left\vert \nabla u\right\vert ^{2}~dy\right) ^{\beta }dx\leq c%
\underset{\Omega }{\int }\left( \left\vert u\right\vert ^{p}\left\vert
\nabla u\right\vert ^{2}\right) ^{\beta }dx.  \label{Eqn 3.17}
\end{equation}%
Moreover, it is obvious that 
\begin{equation*}
\frac{1}{\left\vert B_{r}\left( x\right) \right\vert }\underset{S_{r}\left(
x\right) }{\int }\left\vert u\right\vert ^{\rho }u\frac{\partial u}{\partial
\nu }~dS\left( y\right) =
\end{equation*}

\begin{equation*}
\frac{1}{\left\vert B_{1}\left( 0\right) \right\vert r^{n}}\underset{%
S_{1}\left( 0\right) }{\int }\left\vert u(x+r\eta )\right\vert ^{\rho
}u(x+r\eta )\left( \nabla u\left( x+r\eta \right) \cdot \nu \right)
r~r^{n-1}dS\left( \eta \right) ,
\end{equation*}

Therefore, from the boundedness of a local spherical maximal function, we
have 
\begin{equation*}
\underset{\Omega }{\int }\left\vert \frac{1}{\left\vert S_{1}\left( 0\right)
\right\vert }\underset{S_{1}\left( 0\right) }{\int }\left\vert u(x+r\eta
)\right\vert ^{\rho }u(x+r\eta )\left( \nabla u\left( x+r\eta \right) \cdot
\nu \right) ds\left( \eta \right) \right\vert ^{\beta }dx\leq
\end{equation*}

\begin{equation}
c\underset{\Omega }{\int }\left( \left\vert u(x)\right\vert ^{\rho
+1}\left\vert \nabla u\left( x\right) \right\vert \right) ^{\beta }dx,
\label{Eqn 3.18}
\end{equation}

where the positive constant $c$ does not dependent on the function $u\left(
x\right) $.

According to (\ref{Eqn 3.6}) we have 
\begin{equation*}
\nabla \cdot \left( ~\left\vert u\right\vert ^{\rho }~u\nabla u\right)
=\left\vert u\right\vert ^{\rho }\Delta u+\left( \rho +1\right) ~\left\vert
u\right\vert ^{\rho }~\left\vert \nabla u\right\vert ^{2}
\end{equation*}%
Taking the integral of both sides of this equality on $B_{r}\left( x\right) $%
, for $x\in \Omega $, and $0<r<dist\left( x,\partial \Omega \right) $ we
recieve 
\begin{equation*}
\frac{\rho +1}{\left\vert B_{r}\left( x\right) \right\vert }\underset{%
B_{r}\left( x\right) }{\int }\left\vert u\right\vert ^{\rho }\left\vert
\nabla u\right\vert ^{2}~dy=
\end{equation*}

\begin{equation*}
\frac{1}{\left\vert B_{r}\left( x\right) \right\vert }\underset{B_{r}\left(
x\right) }{\int }\nabla \cdot \left( ~\left\vert u\right\vert ^{\rho
}u~\nabla u\right) dy-\frac{1}{\left\vert B_{r}\left( x\right) \right\vert }%
\underset{B_{r}\left( x\right) }{\int }\left\vert u\right\vert ^{\rho
}u\Delta udy
\end{equation*}

or%
\begin{equation*}
\frac{1}{\left\vert B_{r}\left( x\right) \right\vert }\underset{B_{r}\left(
x\right) }{\int }~\left\vert u\right\vert ^{\rho }~\left\vert \nabla
u\right\vert ^{2}dy=
\end{equation*}

\begin{equation}
\frac{1}{\left( \rho +1\right) }\left\{ -\frac{1}{\left\vert B_{r}\left(
x\right) \right\vert }\underset{B_{r}\left( x\right) }{\int }\left\vert
u\right\vert ^{\rho }u\Delta udy+\frac{1}{\left\vert B_{r}\left( x\right)
\right\vert }\underset{S_{r}\left( x\right) }{\int }\left\vert u\right\vert
^{\rho }u\frac{\partial u}{\partial \nu }~dS\left( y\right) \right\}
\label{Eqn 3.19}
\end{equation}

Using (\ref{Eqn 3.19}), the left part of (\ref{Eqn 3.15}) is estimated in
the following way 
\begin{equation*}
\underset{\Omega }{\int }\left[ \left\vert u\right\vert ^{\rho }\left\vert
\nabla u\right\vert ^{2}\right] ^{\beta }dx\leq c\underset{\Omega }{\int }%
\left\vert \left\vert u\right\vert ^{\rho }\left\vert \nabla u\right\vert
^{2}-\frac{1}{\left\vert B_{r}\left( x\right) \right\vert }\underset{%
B_{r}\left( x\right) }{\int }\left\vert u\right\vert ^{\rho }\left\vert
\nabla u\right\vert ^{2}~dy\right\vert ^{\beta }dx+
\end{equation*}%
\begin{equation}
c\underset{\Omega }{\int }\left\vert \frac{1}{\left\vert B_{r}\left(
x\right) \right\vert }\underset{B_{r}\left( x\right) }{\int }\left\vert
u\right\vert ^{\rho }\left\vert \nabla u\right\vert ^{2}~dy\right\vert
^{\beta }dx=I_{1}(r)+I_{2}(r)  \label{Eqn 3.20}
\end{equation}%
According to (\ref{Eqn 3.16}) we have $\underset{r\rightarrow 0}{\lim }$ $%
I_{1}(r)=0$. Therefore, it is enough to show that $I_{2}(r)$ is estimated
uniformly with respect to the $r.$Taking (\ref{Eqn 3.17}), (\ref{Eqn 3.18})
and (\ref{Eqn 3.19}) into consideration in $I_{2}(r)$ we get 
\begin{equation*}
I_{2}(r)=c\underset{\Omega }{\int }\left\vert \frac{1}{\left\vert
B_{r}\left( x\right) \right\vert }\underset{B_{r}\left( x\right) }{\int }%
\left\vert u\right\vert ^{\rho }\left\vert \nabla u\right\vert
^{2}~dy\right\vert ^{\beta }dx=
\end{equation*}%
\begin{equation*}
c\underset{\Omega }{\int }\left\vert -\frac{1}{\left\vert B_{r}\left(
x\right) \right\vert }\underset{B_{r}\left( x\right) }{\int }\left\vert
u\right\vert ^{\rho }u\Delta udy+\frac{1}{\left\vert B_{r}\left( x\right)
\right\vert }\underset{S_{r}\left( x\right) }{\int }\left\vert u\right\vert
^{\rho }u\frac{\partial u}{\partial \nu }~dS\left( y\right) \right\vert
^{\beta }dx\leq
\end{equation*}%
\begin{equation*}
c_{1}\underset{\Omega }{\int }\left\vert \frac{1}{\left\vert B_{r}\left(
x\right) \right\vert }\underset{B_{r}\left( x\right) }{\int }\left\vert
u\right\vert ^{\rho }u\Delta udy\right\vert ^{\beta }dx+c_{1}\underset{%
\Omega }{\int }\left\vert \frac{1}{\left\vert B_{r}\left( x\right)
\right\vert }\underset{S_{r}\left( x\right) }{\int }\left\vert u\right\vert
^{\rho }u\frac{\partial u}{\partial \nu }~dS\left( y\right) \right\vert
^{\beta }dx\leq
\end{equation*}%
\begin{equation*}
c_{1}\underset{\Omega }{\int }\left\vert \left\vert u\right\vert ^{\rho
}u\Delta u\right\vert ^{\beta }dx+c_{1}\underset{\Omega }{\int }\left\vert
\left\vert u\right\vert ^{\rho }u\nabla u\right\vert ^{\beta }dx\leq
\end{equation*}%
\begin{equation*}
c_{1}\underset{\Omega }{\int }\left\vert \left\vert u\right\vert ^{\rho
}u\Delta u\right\vert ^{\beta }dx+\varepsilon \underset{\Omega }{\int }%
\left\vert \left\vert u\right\vert ^{\rho }\left\vert \nabla u\right\vert
^{2}\right\vert ^{\beta }dx+c_{2}\left( \varepsilon \right) \underset{\Omega 
}{\int }\left\vert u\right\vert ^{\left( \rho +2\right) \beta }dx.
\end{equation*}%
Consequently 
\begin{equation}
I_{2}(r)\leq c_{1}\underset{\Omega }{\int }\left\vert u\right\vert ^{\alpha
+\beta }\left\vert \Delta u\right\vert ^{\beta }dx+\varepsilon \underset{%
\Omega }{\int }\left\vert u\right\vert ^{\alpha }\left\vert \nabla
u\right\vert ^{2\beta }dx+c_{2}\left( \varepsilon \right) \underset{\Omega }{%
\int }\left\vert u\right\vert ^{\alpha +2\beta }dx.  \label{Eqn 3.21}
\end{equation}

where $c_{1}$,$c_{2}$ are positive quantities not dependend on $r.$ Choosing
sufficiently small $\varepsilon >0$, such that $\varepsilon <1$, then
substituting the right side of (\ref{Eqn 3.21}) into (\ref{Eqn 3.20}) and
passing to the limit by $r\searrow 0$ in the obtained inequality we get the
desired inequality (\ref{Eqn 3.15}).
\end{proof}

Proposition \ref{Proposition 3.2} and Lemma \ref{Lemma 3.1} imply

\begin{corollary}
\label{Corollary 3.1}Under the conditions of Lemma \ref{Lemma 3.1} the
inequality 
\begin{equation}
\underset{\Omega }{\int }\left\vert u\right\vert ^{\alpha }\left\vert \nabla
u\right\vert ^{2\beta }dx\leq c\underset{\Omega }{\int }\left\vert
u\right\vert ^{\alpha +\beta }\left\vert \Delta u\right\vert ^{\beta }dx
\label{Eqn 3.22}
\end{equation}%
holds with $c=c\left( \alpha ,\beta \right) $ that is not dependent on $u$.
\end{corollary}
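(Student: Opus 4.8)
The plan is to obtain (\ref{Eqn 3.22}) by feeding Proposition \ref{Proposition 3.2} into the estimate (\ref{Eqn 3.15}) already furnished by Lemma \ref{Lemma 3.1}. Lemma \ref{Lemma 3.1} hands us the target term $c_{1}\int_{\Omega}|u|^{\alpha+\beta}|\Delta u|^{\beta}\,dx$ on the right-hand side for free; the only thing separating (\ref{Eqn 3.15}) from (\ref{Eqn 3.22}) is the zeroth-order remainder $c_{2}\int_{\Omega}|u|^{\alpha+2\beta}\,dx$. Hence the whole problem collapses to dominating this pure power term by the Laplacian term, after which the result is immediate.

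First I would apply Proposition \ref{Proposition 3.2} with the exponent pair shifted from $(\alpha,\beta)$ to $(\alpha+\beta,\beta)$. Writing $\alpha+2\beta=(\alpha+\beta)+\beta$ and reading (\ref{Eqn 3.12}) with $\alpha$ replaced by $\alpha+\beta$ gives
\[
\int_{\Omega}|u|^{\alpha+2\beta}\,dx\le c\int_{\Omega}|u|^{\alpha+\beta}|\Delta u|^{\beta}\,dx .
\]
Before invoking (\ref{Eqn 3.12}) I must check that the three hypotheses of Proposition \ref{Proposition 3.2} survive the shift. The requirement ``$\alpha>-1$'' becomes $\alpha+\beta>-1$, which is immediate from $\alpha>-1$ and $\beta\ge 1$; the requirement ``$\beta\ge 1$'' is untouched and holds since $\beta>\frac{n}{n-1}>1$; and the requirement ``$\alpha+\beta\ge 2$'' becomes $\alpha+2\beta\ge 2$, which is the one genuinely needing verification. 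Once this bound is secured I would substitute it into the right-hand side of (\ref{Eqn 3.15}), combine the two multiples of $\int_{\Omega}|u|^{\alpha+\beta}|\Delta u|^{\beta}\,dx$, and read off (\ref{Eqn 3.22}) with constant $c=c_{1}+c_{2}c$, which depends only on $\alpha,\beta$ (and $\operatorname{mes}\Omega$) and not on $u$, as claimed.

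The step I expect to be delicate is precisely the verification of the shifted third hypothesis $\alpha+2\beta\ge 2$. From $\beta>\frac{n}{n-1}$ one only gets $2\beta>\frac{2n}{n-1}>2$, so $\alpha+2\beta>\alpha+2>1$, which falls short of the needed lower bound $2$ once $\alpha$ is allowed near $-1$ and $\beta$ near $\frac{n}{n-1}$ (a regime that appears for larger $n$). For small $n$ this is not an issue, e.g.\ for $n=2$ one has $2\beta>4$ and thus $\alpha+2\beta>3$ comfortably; in the borderline range the cleanest remedy is either to record $\alpha+2\beta\ge 2$ as the natural admissibility condition under which the remainder is absorbable, or to handle $\int_{\Omega}|u|^{\alpha+2\beta}\,dx$ by an auxiliary estimate (in the spirit of Remark \ref{Remark 3.1}) rather than by a direct shifted application of Proposition \ref{Proposition 3.2}. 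Apart from this hypothesis check, the argument is a one-line substitution of two previously proved inequalities.
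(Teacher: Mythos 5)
Your proof is exactly the paper's: the paper offers no argument beyond the sentence ``Proposition \ref{Proposition 3.2} and Lemma \ref{Lemma 3.1} imply'' the corollary, and the intended combination is precisely yours --- take (\ref{Eqn 3.15}) and absorb the remainder $c_{2}\int_{\Omega}\left\vert u\right\vert ^{\alpha +2\beta }dx$ via Proposition \ref{Proposition 3.2} applied with $\alpha $ replaced by $\alpha +\beta $. The hypothesis check you flag is a genuine subtlety the paper silently skips: the shifted condition $\alpha +2\beta \geq 2$ follows from the conditions of Lemma \ref{Lemma 3.1} only for $n=2,3$ (since $\alpha +2\beta >-1+\frac{2n}{n-1}=\frac{n+1}{n-1}$), and for $n\geq 4$ it requires either the extra assumption you name or a condition such as $\alpha >\beta -1$, which the paper does impose in its subsequent applications (Lemma \ref{Lemma 3.2}, Theorem \ref{Theorem 3.1}) --- so your version is, if anything, more careful than the original.
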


\bigskip Our next goal is considering relations between the spaces $W_{\beta
}^{2}\left( \Omega \right) $ and .$S_{\Delta ,\alpha ,\beta }\left( \Omega
\right) .$

We start with definition of the second order Sobolev space: 
\begin{equation*}
W_{\beta }^{2}\left( \Omega \right) \equiv \left\{ u\in L_{1}\left( \Omega
\right) ~\left\vert \ u,D_{i}u,D_{i}D_{j}u\in L_{\beta }\left( \Omega
\right) ,\right. \ i,j=\overline{1,n}\right\} .
\end{equation*}%
\qquad \qquad

It is well known (\cite{BIN}) that 
\begin{equation*}
W_{\beta }^{2}\left( \Omega \right) \equiv \left\{ u\in L_{1}\left( \Omega
\right) ~\left\vert u,D_{i}^{2}u\in L_{\beta }\left( \Omega \right) ,\ i=%
\overline{1,n}\right. \right\} .
\end{equation*}

Moreover, for sufficiently smooth domains (\cite{ADN, BIN}).

\begin{equation*}
W_{\beta }^{2}\left( \Omega \right) \cap \overset{0}{W}\/_{\beta }^{1}\left(
\Omega \right) \equiv \left\{ u~\left\vert ~\Delta u\in L_{\beta }\left(
\Omega \right) ,\ u\left\vert _{\partial \Omega }=0\right. \right. \right\}
\end{equation*}

We also define the following class of functions 
\begin{equation}
\mathcal{M}_{\Delta \circ \eta ,L_{\beta }\left( \Omega \right) }\equiv
\left\{ u~\left\vert ~\Delta \circ ~\eta \left( u\right) \in L_{\beta
}\left( \Omega \right) ,\ \eta \left( u\right) \equiv \left\vert
u\right\vert ^{\frac{\alpha }{\beta }}u\right. \right\} .  \label{Eqn 3.23}
\end{equation}

Now, we are ready to compare spaces defined in (\ref{Eqn 3.5}) and (\ref{Eqn
3.23})

\begin{lemma}
\label{Lemma 3.2}Let $\alpha \geq 0$, $\alpha _{1}>-1$, $\beta _{1}\geq
\beta \geq \frac{\beta _{1}}{2}\geq 1$, $\beta >\frac{n}{n-1}$ be some
numbers, $\alpha +\beta =\alpha _{1}+\beta _{1}$, (if $\beta _{1}=2\beta $
then $\alpha >\beta -1$) and $\Omega \subset \Re ^{n},$ $n\geq 1,$ be a
bounded domain with sufficiently smooth boundary $\partial \Omega $. Then,
the following inclusion 
\begin{equation*}
\overset{0}{S}_{\Delta ,\alpha ,\beta }\left( \Omega \right) =S_{\Delta
,\alpha ,\beta }\left( \Omega \right) \cap \left\{ u\left\vert ~u\left\vert
_{\partial \Omega }\right. =0\right. \right\} \subseteq \mathcal{M}_{\Delta
\circ \eta ,L_{\beta }\left( \Omega \right) }\cap \left\{ u\left\vert
~u\left\vert _{\partial \Omega }\right. =0\right. \right\}
\end{equation*}%
takes place.
\end{lemma}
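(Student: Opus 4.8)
The plan is to prove the inclusion by showing that for every $u\in\overset{0}{S}_{\Delta,\alpha,\beta}(\Omega)$ the function $\Delta\eta(u)$, with $\eta(u)=|u|^{\alpha/\beta}u$, lies in $L_{\beta}(\Omega)$; the boundary condition $u|_{\partial\Omega}=0$ is carried along unchanged and is precisely what licenses the integration-by-parts estimates of the previous results. I would first prove the quantitative bound $\|\Delta\eta(u)\|_{L_{\beta}}^{\beta}\le C[u]_{S_{\Delta}}^{\alpha+\beta}$ for $u\in C^{2}(\Omega)\cap C_{0}^{1}(\overline{\Omega})$, where the pointwise identity (\ref{Eqn 3.6}) is available, and only then pass to a general element of the pn-space by approximation.

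In the smooth case, inserting $v=\eta(u)$ into (\ref{Eqn 3.6}) gives
\[
\Delta\eta(u)=\frac{\alpha+\beta}{\beta}\,|u|^{\alpha/\beta}\Delta u+\frac{\alpha(\alpha+\beta)}{\beta^{2}}\,|u|^{\frac{\alpha}{\beta}-2}u\,|\nabla u|^{2},
\]
so that, by the convexity inequality $(a+b)^{\beta}\le 2^{\beta-1}(a^{\beta}+b^{\beta})$ valid for $\beta>1$,
\[
\int_{\Omega}|\Delta\eta(u)|^{\beta}\,dx\le C_{1}\int_{\Omega}|u|^{\alpha}|\Delta u|^{\beta}\,dx+C_{2}\int_{\Omega}|u|^{\alpha-\beta}|\nabla u|^{2\beta}\,dx.
\]
The first integral on the right is one of the three defining quantities of $S_{\Delta,\alpha,\beta}(\Omega)$, hence finite. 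The entire difficulty is thus concentrated in the second integral, a gradient integral of order $2\beta$ which, in general, is not part of the norm $[u]_{S_{\Delta}}$.

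At this point I would invoke Corollary \ref{Corollary 3.1} with the exponent pair $(\alpha-\beta,\beta)$ replacing $(\alpha,\beta)$: since $\beta>\tfrac{n}{n-1}$ is assumed, it yields
\[
\int_{\Omega}|u|^{\alpha-\beta}|\nabla u|^{2\beta}\,dx\le c\int_{\Omega}|u|^{(\alpha-\beta)+\beta}|\Delta u|^{\beta}\,dx=c\int_{\Omega}|u|^{\alpha}|\Delta u|^{\beta}\,dx,
\]
again finite, which closes the desired bound. Two observations organize the role of the hypotheses. When $\alpha=0$ the coefficient $\tfrac{\alpha(\alpha+\beta)}{\beta^{2}}$ vanishes, the second term disappears and $\Delta\eta(u)=\Delta u\in L_{\beta}$ trivially. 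In the extreme case $\beta_{1}=2\beta$ one has $\alpha_{1}=\alpha-\beta$, so the troublesome integral $\int|u|^{\alpha-\beta}|\nabla u|^{2\beta}$ is exactly the first-order piece $\int|u|^{\alpha_{1}}|\nabla u|^{\beta_{1}}$ of $[u]_{S_{\Delta}}$ and is finite by definition; the side requirement $\alpha>\beta-1$ there is nothing but $\alpha_{1}>-1$, the condition that makes this exponent admissible and Corollary \ref{Corollary 3.1} applicable.

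The step I expect to be the main obstacle is the passage from smooth functions to a general $u\in\overset{0}{S}_{\Delta,\alpha,\beta}(\Omega)$. Since $S_{\Delta,\alpha,\beta}$ is a nonlinear pn-space rather than a linear one, I cannot merely quote density of $C^{2}\cap C_{0}^{1}$; instead I would have to produce an approximating sequence $u_{k}\to u$ along which the weighted integrals $\int|u_{k}|^{\alpha+\beta}$, $\int|u_{k}|^{\alpha}|\Delta u_{k}|^{\beta}$ and $\int|u_{k}|^{\alpha-\beta}|\nabla u_{k}|^{2\beta}$ converge simultaneously, so that both (\ref{Eqn 3.6}) and the inequality of Corollary \ref{Corollary 3.1} survive the limit. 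Controlling the negative power $\alpha-\beta$ of $|u|$ near the zero set of $u$ across the full admissible range $\beta\le\beta_{1}\le 2\beta$ is the delicate point; once such an approximation is secured, the inclusion follows at once from the two displayed estimates.
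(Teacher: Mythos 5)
Your proposal follows essentially the same route as the paper's own proof: decompose $\Delta \eta \left( u\right) $ via (\ref{Eqn 3.6}), bound $\underset{\Omega }{\int }\left\vert u\right\vert ^{\alpha }\left\vert \Delta u\right\vert ^{\beta }dx$ by the defining seminorm, and control $\underset{\Omega }{\int }\left\vert u\right\vert ^{\alpha -\beta }\left\vert \nabla u\right\vert ^{2\beta }dx$ by Corollary \ref{Corollary 3.1} applied with the shifted exponent $\alpha -\beta $. The approximation step you flag as the main obstacle is not addressed in the paper at all --- it applies the inequalities proved for $C^{2}\left( \Omega \right) \cap C_{0}^{1}\left( \overline{\Omega }\right) $ directly to arbitrary elements of $\overset{0}{S}_{\Delta ,\alpha ,\beta }\left( \Omega \right) $ --- so your argument matches, and is if anything more candid than, the published one.
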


\begin{proof}
Let $u\in \overset{0}{S}_{\Delta ,\alpha ,\beta }\left( \Omega \right) $ be
an arbitrary function. Then, according to (\ref{Eqn 3.23}) $\eta _{1}\left(
u\right) \equiv \left\vert u\left( x\right) \right\vert ^{\frac{\alpha _{1}}{%
\beta _{1}}}u\left( x\right) \in \overset{0}{W}$\/$_{\beta _{1}}^{1}\left(
\Omega \right) $ as far as $u\left\vert _{\partial \Omega }\right. =0$ $%
\Longleftrightarrow \eta _{1}\left( u\right) \left\vert _{\partial \Omega
}\right. =0,$ and $\left\vert u\right\vert ^{\frac{\alpha }{\beta }}~\Delta
u\in L_{\beta }\left( \Omega \right) $. Moreover, if $\alpha _{1}>0$ then $%
u\left\vert _{\partial \Omega }\right. =0$ $\Longrightarrow $ $\frac{%
\partial }{\partial n}\eta _{1}\left( u\right) \left\vert _{\partial \Omega
}\right. =0$ .

Under the conditions of Lemma, according to (\ref{Eqn 3.22}) and (\ref{Eqn
3.11}), the inequality 
\begin{equation*}
\underset{\Omega }{\int }\left\vert u\right\vert ^{\alpha _{1}}\left\vert
\nabla u\right\vert ^{\beta _{1}}dx\leq c\underset{\Omega }{\int }\left\vert
u\right\vert ^{\alpha }\left\vert \Delta u\right\vert ^{\beta }dx
\end{equation*}%
takes place with $c=c\left( \alpha ,\beta ,\alpha _{1},\beta _{1}\right) $
that is not dependent on $u.$

Taking this and Corollary \ref{Corollary 3.1} into account we conclude 
\begin{equation}
\overset{0}{S}_{\Delta ,\alpha ,\beta }\left( \Omega \right) \equiv \left\{
u\left( x\right) \left\vert \ \left\vert u\right\vert ^{\frac{\alpha }{\beta 
}}~\Delta u\in L_{\beta }\left( \Omega \right) \right. \right\} \cap \left\{
u\left\vert ~u\left\vert _{\partial \Omega }\right. =0\right. \right\} ,
\label{Eqn 3.24}
\end{equation}

On the other hand, the definition (\ref{Eqn 3.23}) implies that $u\in 
\mathcal{M}_{\Delta \circ \eta ,L_{\beta }\left( \Omega \right) }$ is
equivalent to $\Delta v\equiv \Delta \eta \left( u\right) \in L_{\beta
}\left( \Omega \right) $. Indeed, using (\ref{Eqn 3.6}) and estimating $%
L_{\beta }\left( \Omega \right) $ of $\Delta v$ we get 
\begin{equation*}
\underset{\Omega }{\left\Vert \Delta v\right\Vert _{L_{\beta }\left( \Omega
\right) }^{\beta }=\int }\left\vert \left\vert u\right\vert ^{\frac{\alpha }{%
\beta }}~\Delta u+\frac{\alpha }{\beta }~\left\vert u\right\vert ^{\frac{%
\alpha }{\beta }-2}~u~\left\vert \nabla u\right\vert ^{2}\right\vert ^{\beta
}dx\leq
\end{equation*}%
\begin{equation*}
c\left\{ \underset{\Omega }{\int }\left\vert \left\vert u\right\vert ^{\frac{%
\alpha }{\beta }}~\Delta u\right\vert ^{\beta }dx+\underset{\Omega }{\int }%
\left\vert ~\left\vert u\right\vert ^{\frac{\alpha }{\beta }-1}~\left\vert
\nabla u\right\vert ^{2}\right\vert ^{\beta }dx\right\} .
\end{equation*}%
Taking the inequality (\ref{Eqn 3.22}) and the equivalence (\ref{Eqn 3.24})
into account we obtain 
\begin{equation*}
\overset{0}{S}_{\Delta ,\alpha ,\beta }\left( \Omega \right) \subseteq 
\mathcal{M}_{\Delta \circ \eta ,L_{\beta }\left( \Omega \right) }\cap
\left\{ u\left\vert ~u\left\vert _{\partial \Omega }\right. =0\right.
\right\}
\end{equation*}
\end{proof}

\begin{corollary}
\label{Corollary 3.2}Under the conditions of Lemma \ref{Lemma 3.2} the
implication 
\begin{equation*}
u\in \overset{0}{S}_{\Delta ,\alpha ,\beta }\left( \Omega \right)
\Longrightarrow v\equiv \eta \left( u\right) \in W_{\beta }^{2}\left( \Omega
\right) \cap \overset{0}{W}~_{\beta }^{1}\left( \Omega \right)
\end{equation*}%
holds.\ \ 
\end{corollary}

\begin{proof}
If $v\left( x\right) \equiv \eta \left( u\right) \equiv \left\vert u\left(
x\right) \right\vert ^{\frac{\alpha }{\beta }}u\left( x\right) $ then $%
\nabla v\left( x\right) \equiv \left( \frac{\alpha }{\beta }+1\right)
\left\vert u\left( x\right) \right\vert ^{\frac{\alpha }{\beta }}\nabla
u\left( x\right) $ and (\ref{Eqn 3.6}) takes place for $\Delta v$.

According to mentioned above, the inclusion $v\equiv \eta \left( u\right)
\in W_{\beta }^{2}\left( \Omega \right) \cap \overset{0}{W}$\/$_{\beta
}^{1}\left( \Omega \right) $ is equivalent to $\Delta v=\Delta \circ \eta
\left( u\right) \in L_{\beta }\left( \Omega \right) $, as far as $%
u\left\vert _{\partial \Omega }\right. =0$ $\iff \eta \left( u\right)
\left\vert _{\partial \Omega }\right. =0$ (moreover, if $\alpha >0$ then $%
u\left\vert _{\partial \Omega }\right. =0$ $\Longrightarrow $ $\frac{%
\partial }{\partial n}\eta \left( u\right) \left\vert _{\partial \Omega
}\right. =0$). This implies that $\eta \left( u\right) \in W_{\beta
}^{2}\left( \Omega \right) \cap \overset{0}{W}$\/$_{\beta }^{1}\left( \Omega
\right) $ is equ\i valent to $u\in M_{\Delta \circ \eta ,L_{\beta }\left(
\Omega \right) }\cap \left\{ u\left\vert ~u\left\vert _{\partial \Omega
}\right. =0\right. \right\} $ Therefore, taking the lemma into account we
conclude the desired implication.
\end{proof}

\begin{notation}
\label{Notation 3.1}If parameters $\alpha ,\alpha _{1}\geq 0$, $\beta ,\beta
_{1},p,p_{0},p_{1}\geq 1$ satisfy certain conditions some relations between
spaces $S_{\Delta ,\alpha ,\beta }\left( \Omega \right) $, $S_{1,\alpha
_{1},\beta _{1}}\left( \Omega \right) $, $L_{p}\left( \Omega \right) $, $%
P_{p_{0},p_{1}}\left( 0,T;X;S_{\Delta ,\alpha ,\beta }\left( \Omega \right)
\right) $, $L_{p}\left( 0,T;S_{1,\alpha _{1},\beta _{1}}\left( \Omega
\right) \right) $ can be obtained according to their definitions. More
precise inclusion and compactness results for them can be proved on the way
that is similar to our earlier works \cite{S2, S3, S4, S5}. Here, we are
presenting some of such type of results.
\end{notation}

\begin{theorem}
\label{Theorem 3.1}Let $\alpha ,\alpha _{1}\geq 0$, $\beta >\frac{n}{n-1}%
,\beta _{1}\geq 1$ be such numbers that $\frac{\alpha _{1}+\beta _{1}}{%
\alpha +\beta }\geq \beta \beta _{1}^{-1}$ and $\alpha \beta _{1}\geq \alpha
_{1}\beta $, $\alpha >\beta -1$ Then $\overset{0}{S}_{\Delta ,\alpha ,\beta
}\left( \Omega \right) $ $\subset $ $S_{1,\alpha _{1},\beta _{1}}\left(
\Omega \right) $.
\end{theorem}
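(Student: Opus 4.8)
The plan is to show that for $u\in\overset{0}{S}_{\Delta,\alpha,\beta}(\Omega)$ the quasinorm $[u]_{S_1}^{\alpha_1+\beta_1}=\int_\Omega\big(|u|^{\alpha_1+\beta_1}+|u|^{\alpha_1}|\nabla u|^{\beta_1}\big)\,dx$ is finite, with an explicit bound in terms of $[u]_{S_\Delta}$. First I would reduce the two integrals to one: by the first inequality of Remark~\ref{Remark 3.1}, i.e.\ (\ref{Eqn 3.10}), applied with gradient exponent $\beta_1$ and function exponent $\alpha_1$, the pure power term is dominated, $\int_\Omega|u|^{\alpha_1+\beta_1}\,dx\le c\int_\Omega|u|^{\alpha_1}|\nabla u|^{\beta_1}\,dx$. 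Hence everything comes down to controlling the weighted gradient integral $\int_\Omega|u|^{\alpha_1}|\nabla u|^{\beta_1}\,dx$ by $\int_\Omega|u|^{\alpha}|\Delta u|^{\beta}\,dx$. Since the auxiliary inequalities are stated for $u\in C^2(\Omega)\cap C^1_0(\overline\Omega)$, I would establish the quasinorm estimate first on this class and then close it on all of $\overset{0}{S}_{\Delta,\alpha,\beta}(\Omega)$ by approximation.

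The engine is Corollary~\ref{Corollary 3.1}. Writing it with weight $\alpha-\beta$ and exponent $\beta$ (legitimate because $\alpha>\beta-1$ makes $\alpha-\beta>-1$, and $\beta>\tfrac{n}{n-1}$), it converts the available Laplacian control into top-order gradient control: $\int_\Omega|u|^{\alpha-\beta}|\nabla u|^{2\beta}\,dx\le c\int_\Omega|u|^{\alpha}|\Delta u|^{\beta}\,dx<\infty$. I would then set $w=|u|^{\frac{\alpha-\beta}{2\beta}}u$, so that $|\nabla w|^{2\beta}=c\,|u|^{\alpha-\beta}|\nabla u|^{2\beta}$ and $w|_{\partial\Omega}=0$; thus $w\in\overset{0}{W}_{2\beta}^{1}(\Omega)$. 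The Sobolev embedding then yields improved integrability, $w\in L_{(2\beta)^{*}}(\Omega)$ with $(2\beta)^{*}=\tfrac{2n\beta}{n-2\beta}$ when $2\beta<n$ (and an arbitrary exponent when $2\beta\ge n$), which translates back to $u\in L_{\gamma}(\Omega)$ for $\gamma$ up to $\tfrac{n(\alpha+\beta)}{n-2\beta}$. This is the step that genuinely raises the total homogeneity degree from $\alpha+\beta$ to $\alpha_1+\beta_1$, and it is needed precisely because the hypotheses (through $\alpha\beta_1\ge\alpha_1\beta$ and $\tfrac{\alpha_1+\beta_1}{\alpha+\beta}\ge\tfrac{\beta}{\beta_1}$) force $\beta\le\beta_1$ and allow $\alpha_1+\beta_1>\alpha+\beta$.

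With these two ingredients I would bound $\int_\Omega|u|^{\alpha_1}|\nabla u|^{\beta_1}\,dx$ by H\"older interpolation between the top gradient integral $\int_\Omega|u|^{\alpha-\beta}|\nabla u|^{2\beta}\,dx$ and the improved power integral $\int_\Omega|u|^{\gamma}\,dx$, splitting $|u|^{\alpha_1}|\nabla u|^{\beta_1}=\big(|u|^{\alpha-\beta}|\nabla u|^{2\beta}\big)^{\theta}\,|u|^{\,\alpha_1-(\alpha-\beta)\theta}$ with $\theta=\beta_1/(2\beta)$ and applying H\"older with exponents $\theta^{-1}$ and $(1-\theta)^{-1}$. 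The role of the two algebraic hypotheses is exactly to make this admissible: $\beta_1\le 2\beta$ (so $\theta\le 1$), the residual weight $\alpha_1-(\alpha-\beta)\theta$ and the resulting exponent $\gamma=\tfrac{2\beta\alpha_1-(\alpha-\beta)\beta_1}{2\beta-\beta_1}$ are nonnegative, and $\gamma$ does not exceed the Sobolev-admissible threshold $\tfrac{n(\alpha+\beta)}{n-2\beta}$. Combining with the reduction from the first paragraph gives a bound for $[u]_{S_1}^{\alpha_1+\beta_1}$ in terms of $[u]_{S_\Delta}$, i.e.\ $u\in S_{1,\alpha_1,\beta_1}(\Omega)$.

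I expect the main obstacle to be the bookkeeping that turns the two stated hypotheses into the admissibility of all the interpolation and embedding exponents uniformly across regimes: in particular verifying $\gamma\le\tfrac{n(\alpha+\beta)}{n-2\beta}$ in the subcritical case $2\beta<n$, treating the borderline $\beta_1=2\beta$ (where no interpolation is needed and (\ref{Eqn 3.22}) is used directly) and the critical/supercritical case $2\beta\ge n$ separately, and checking the endpoints when $\gamma$ or the residual weight vanishes. The remaining delicate point is the density/closure step: the auxiliary inequalities hold a priori only for smooth $u$, so one must approximate a general $u\in\overset{0}{S}_{\Delta,\alpha,\beta}(\Omega)$ and pass to the limit in the quasinorm estimate, which is where the equivalences (\ref{Eqn 3.5}) and (\ref{Eqn 3.24}) for these pn-spaces are used.
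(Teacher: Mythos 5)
Your outline does track the paper's proof in its broad strokes: the paper also reduces everything to a master inequality bounding $\int_{\Omega}|u|^{\alpha_{1}}|\nabla u|^{\beta_{1}}dx$ by $c(\varepsilon)\int_{\Omega}|u|^{\alpha}|\Delta u|^{\beta}dx$ plus a lower-order power term, derived from (\ref{Eqn 3.10}), (\ref{Eqn 3.11}), (\ref{Eqn 3.15}) and (\ref{Eqn 3.22}); your only real departure is to replace the weighted-gradient comparison (\ref{Eqn 3.11}) by a Sobolev embedding for $w=|u|^{(\alpha-\beta)/(2\beta)}u$ followed by H\"older interpolation. The genuine gap is that the admissibility facts you defer to ``bookkeeping'' are not consequences of the theorem's hypotheses, so the deferred step is not a verification but a falsehood. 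The hypotheses do force $\beta_{1}\geq\beta$ (that part of your claim is correct), but they do not force $\beta_{1}\leq 2\beta$: for $\alpha_{1}=0$ the hypothesis $\frac{\alpha_{1}+\beta_{1}}{\alpha+\beta}\geq\beta\beta_{1}^{-1}$ reads $\beta_{1}^{2}\geq\beta(\alpha+\beta)$, so as soon as $\alpha>3\beta$ \emph{every} admissible $\beta_{1}$ satisfies $\beta_{1}>2\beta$, and your splitting exponent $\theta=\beta_{1}/(2\beta)$ exceeds $1$. Likewise the residual weight $\alpha_{1}-(\alpha-\beta)\theta$ need not be nonnegative: the choice $\alpha_{1}=0$, $\beta_{1}=2\beta$, $\beta<\alpha\leq 3\beta$ satisfies all hypotheses (the first becomes $\frac{2\beta}{\alpha+\beta}\geq\frac{1}{2}$, i.e. $\alpha\leq 3\beta$), yet then your factorization carries the unbounded factor $|u|^{-(\alpha-\beta)}$, and H\"older yields nothing for functions vanishing inside $\Omega$; note also that your parenthetical claim that at $\beta_{1}=2\beta$ ``no interpolation is needed and (\ref{Eqn 3.22}) is used directly'' silently presumes $\alpha_{1}=\alpha-\beta$.

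This gap cannot be closed by a smarter splitting, because at exactly such parameters the stated inclusion is itself false. Take $n=3$, $\beta=2$, $\alpha=6$, $\alpha_{1}=0$, $\beta_{1}=4$: all hypotheses of the theorem hold, the first with equality. On the unit ball let $u(x)=|x|^{1/4}\phi(|x|)$ with $\phi$ smooth, $\phi\equiv 1$ near the origin and $\phi=0$ at the boundary. Then $|u|^{\alpha/\beta}\Delta u=|u|^{3}\Delta u\sim|x|^{-1}\in L_{2}(\Omega)$, so $u\in\overset{0}{S}_{\Delta ,6,2}\left( \Omega \right)$ by the paper's own characterization (\ref{Eqn 3.24}) (the $S_{1}$-part of the seminorm in (\ref{Eqn 3.2}) is finite as well for the exponents used in Lemma \ref{Lemma 3.2}), while $|\nabla u|^{4}\sim|x|^{-3}$ fails to be integrable in $R^{3}$, so $u\notin S_{1,0,4}\left( \Omega \right)$. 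Consequently no argument can establish the theorem under the literal hypotheses; what your interpolation actually proves is the inclusion under the additional restrictions $\beta_{1}\leq 2\beta$, $2\beta\alpha_{1}\geq(\alpha-\beta)\beta_{1}$ and $\gamma\leq\frac{n(\alpha+\beta)}{n-2\beta}$ --- which, tellingly, are precisely the conditions under which the paper's own tool (\ref{Eqn 3.11}) applies with $(\alpha-\beta,2\beta)$ as the dominating pair. So your method is sound in that restricted regime, but as a proof of Theorem \ref{Theorem 3.1} as stated it cannot be completed, and the defect lies in the hypotheses of the statement rather than in your choice of tools.
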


The proof follows from the inequality 
\begin{equation*}
\underset{\Omega }{\int }\left\vert u\right\vert ^{\alpha _{1}}\left\vert
\nabla u\right\vert ^{\beta _{1}}dx\leq c\left( \varepsilon \right) \underset%
{\Omega }{\int }\left\vert u\right\vert ^{\alpha }\left\vert \Delta
u\right\vert ^{\beta }dx+\varepsilon \left( \underset{\Omega }{\int }%
\left\vert u\right\vert ^{s}dx\right) ^{\frac{\alpha +\beta }{s}},\ 
\end{equation*}%
where $\ s=s\left( \alpha ,\alpha _{1},\beta ,\beta _{1}\right) \leq \alpha
+\beta $, that can be derived by using the inequalities (\ref{Eqn 3.10}), (%
\ref{Eqn 3.11}), (\ref{Eqn 3.15}) and (\ref{Eqn 3.22}) (for details refer to 
\cite{S3, S4}).

\begin{remark}
\label{Remark 3.2}Note that it is not difficult to verify that if $\frac{%
\alpha +\beta }{\alpha _{1}+\beta _{1}}\geq 1$, $\beta \geq \beta _{1}$ and $%
\frac{n\left( \alpha _{1}+\beta _{1}\right) }{n-\beta _{1}}\geq p$, $n>\beta
_{1}$ then the following inclusions 
\begin{equation*}
S_{1,\alpha ,\beta }\left( \Omega \right) \subseteq S_{1,\alpha _{1},\beta
_{1}}\left( \Omega \right) \subset L_{p}\left( \Omega \right) ,\text{ \ \ }%
S_{\Delta ,\alpha ,\beta }\left( \Omega \right) \subseteq S_{\Delta ,\alpha
_{1},\beta _{1}}\left( \Omega \right)
\end{equation*}%
take place. \ Moreover, arguments similar to those that express relations
between the considered and Sobolev spaces show that the inclusion $%
S_{1,\alpha ,\beta }\left( \Omega \right) \subset L_{p}\left( \Omega \right) 
$ and consequently, $\overset{0}{S}_{\Delta ,\alpha ,\beta }\left( \Omega
\right) \subset L_{p}\left( \Omega \right) $ are compact (for detail one can
refer to \cite{S3, S4, S5}).
\end{remark}

\begin{corollary}
\label{Corollary 3.3}Assume that the conditions of Theorem \ref{Theorem 3.1}
are fulfilled. Then, the following inclusions 
\begin{equation*}
P_{p_{0},p_{1}}\left( 0,T;S_{\Delta ,\alpha ,\beta }\left( \Omega \right)
;X\right) \subseteq P_{\widetilde{p}_{0},\widetilde{p}_{1}}\left(
0,T;S_{\Delta ,\alpha _{1},\beta _{1}}\left( \Omega \right) ;\widetilde{X}%
\right) ,
\end{equation*}%
\begin{equation*}
P_{p_{0},p_{1}}\left( 0,T;S_{\Delta ,\alpha ,\beta }\left( \Omega \right)
;X\right) \subset L_{p}\left( 0,T;S_{1,\alpha _{1},\beta _{1}}\left( \Omega
\right) \right) ,
\end{equation*}%
hold if $X\subseteq \widetilde{X}$, and $p_{0}\geq \widetilde{p}_{0}\geq
1,p_{1}\geq \widetilde{p}_{1}\geq 1$, $p_{1}\geq p\geq 1$,
\end{corollary}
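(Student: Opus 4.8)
The plan is to reduce both inclusions to the \emph{pointwise-in-time} spatial embeddings already at hand (Theorem \ref{Theorem 3.1} and Remark \ref{Remark 3.2}) and then to lift them to the cylinder $(0,T)\times\Omega$ by integrating in $t$, exploiting that $(0,T)$ has finite measure so that higher time-integrability forces lower time-integrability.

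For the \textbf{first inclusion} I would split $P_{p_{0},p_{1}}$ according to its definition (\ref{Eqn 3.4}) and treat the two factors separately. For the Sobolev factor, since $X\subseteq\widetilde{X}$ continuously and $p_{0}\geq\widetilde{p}_{0}$, H\"older's inequality on the finite interval gives $L_{p_{0}}(0,T;X)\hookrightarrow L_{\widetilde{p}_{0}}(0,T;\widetilde{X})$; applying this to both $u$ and $du/dt$ yields $W^{1}_{p_{0}}(0,T;X)\subseteq W^{1}_{\widetilde{p}_{0}}(0,T;\widetilde{X})$. For the second factor I would invoke the spatial monotonicity $S_{\Delta,\alpha,\beta}(\Omega)\subseteq S_{\Delta,\alpha_{1},\beta_{1}}(\Omega)$ of Remark \ref{Remark 3.2}, valid for a.e.\ $t$, and then pass from $L_{p_{1}}$ to $L_{\widetilde{p}_{1}}$ integrability in $t$ using $p_{1}\geq\widetilde{p}_{1}$ and finite-interval H\"older. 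Intersecting the two factors gives the claim.

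For the \textbf{second inclusion} the essential input is the quantitative estimate behind Theorem \ref{Theorem 3.1}: combining (\ref{Eqn 3.10}), (\ref{Eqn 3.11}), (\ref{Eqn 3.15}) and (\ref{Eqn 3.22}) exactly as in its proof, and absorbing the lower-order $L_{s}$-term ($s\leq\alpha+\beta$) through Proposition \ref{Proposition 3.2}, one obtains for a.e.\ $t$ the homogeneous bound
\[
[u(t)]_{S_{1,\alpha_{1},\beta_{1}}}\ \leq\ c\,[u(t)]_{S_{\Delta,\alpha,\beta}}^{\,(\alpha+\beta)/(\alpha_{1}+\beta_{1})}.
\]
Raising to the power $p$ and integrating over $(0,T)$ reduces the claim to the finiteness of $\int_{0}^{T}[u(t)]_{S_{\Delta,\alpha,\beta}}^{\,q}\,dt$ with $q=p(\alpha+\beta)/(\alpha_{1}+\beta_{1})$. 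Since $u\in L_{p_{1}}(0,T;S_{\Delta,\alpha,\beta})$ and the interval is finite, this integral is controlled by a power of $\|u\|_{L_{p_{1}}(0,T;S_{\Delta,\alpha,\beta})}$ via H\"older as soon as $q\leq p_{1}$.

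The \textbf{main obstacle} is exactly this exponent bookkeeping: one must verify $q=p(\alpha+\beta)/(\alpha_{1}+\beta_{1})\leq p_{1}$, which is where the hypotheses $\tfrac{\alpha_{1}+\beta_{1}}{\alpha+\beta}\geq\beta\beta_{1}^{-1}$ (bounding $(\alpha+\beta)/(\alpha_{1}+\beta_{1})$ from above by $\beta_{1}/\beta$) and $p_{1}\geq p$ combine, together with $\alpha\beta_{1}\geq\alpha_{1}\beta$ needed to make the underlying spatial estimate of Theorem \ref{Theorem 3.1} applicable. One must also confirm that the lower-order term carried along in that spatial estimate survives the same time-integration, which again follows from $q\leq p_{1}$ and the continuous embedding $S_{\Delta,\alpha,\beta}(\Omega)\subset L_{s}(\Omega)$. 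Everything else is the routine Bochner-space lifting of an a.e.-in-$t$ spatial inclusion.
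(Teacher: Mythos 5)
The paper offers no actual proof of this corollary (it is stated as an immediate consequence of Theorem \ref{Theorem 3.1} and the definitions), and your strategy --- establish the spatial embedding for a.e.\ $t$ and lift it to $Q_{T}$ by H\"older's inequality in time on the finite interval --- is certainly the intended one. However, two of your concrete steps fail under the hypotheses actually in force.

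The main gap is the exponent bookkeeping for the second inclusion. You claim that $q=p(\alpha+\beta)/(\alpha_{1}+\beta_{1})\leq p_{1}$ follows from $\frac{\alpha_{1}+\beta_{1}}{\alpha+\beta}\geq\beta\beta_{1}^{-1}$ together with $p_{1}\geq p$. This is a non sequitur: the ratio hypothesis gives only $q\leq p\,\beta_{1}/\beta$, and the factor $\beta_{1}/\beta$ cannot be discarded, because the two ratio hypotheses of Theorem \ref{Theorem 3.1} jointly force $\beta_{1}\geq\beta$. Indeed, $\alpha\beta_{1}\geq\alpha_{1}\beta$ is equivalent to $\frac{\alpha+\beta}{\alpha_{1}+\beta_{1}}\geq\frac{\beta}{\beta_{1}}$, while the other hypothesis says $\frac{\alpha+\beta}{\alpha_{1}+\beta_{1}}\leq\frac{\beta_{1}}{\beta}$; together these give $\frac{\beta}{\beta_{1}}\leq\frac{\beta_{1}}{\beta}$, i.e.\ $\beta\leq\beta_{1}$. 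A concrete failure: $n=3$, $\alpha=2$, $\beta=2$, $\alpha_{1}=0$, $\beta_{1}=3$, $p=p_{1}$ satisfies every hypothesis of Theorem \ref{Theorem 3.1} and of the corollary, yet $q=\frac{4}{3}p>p_{1}$, so your H\"older step breaks down. What would close the argument is $\alpha+\beta\leq\alpha_{1}+\beta_{1}$ (then $q\leq p\leq p_{1}$), e.g.\ under the normalization $\alpha_{1}+\beta_{1}=\alpha+\beta$ used elsewhere in the paper (definition (\ref{Eqn 3.2}), Lemma \ref{Lemma 3.2}, Remark \ref{Remark 3.3}), but that is not implied by the hypotheses you invoke. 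Relatedly, your displayed bound $[u]_{S_{1,\alpha_{1},\beta_{1}}}\leq c\,[u]_{S_{\Delta,\alpha,\beta}}^{(\alpha+\beta)/(\alpha_{1}+\beta_{1})}$ is literally false whenever the exponent differs from $1$: both functionals are positively $1$-homogeneous, so replacing $u$ by $\lambda u$ and letting $\lambda\searrow0$ (or $\lambda\nearrow\infty$) forces $[u]_{S_{1}}\equiv0$. The correct output of Theorem \ref{Theorem 3.1}'s machinery is the inhomogeneous estimate $[u]_{S_{1,\alpha_{1},\beta_{1}}}^{\alpha_{1}+\beta_{1}}\leq C\left(1+[u]_{S_{\Delta,\alpha,\beta}}^{\alpha+\beta}\right)$; the additive constant is harmless after integration over $(0,T)$, but the exponents then lead to exactly the constraint $q\leq p_{1}$ discussed above.

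There is a second, smaller gap in the first inclusion: you invoke the monotonicity $S_{\Delta,\alpha,\beta}(\Omega)\subseteq S_{\Delta,\alpha_{1},\beta_{1}}(\Omega)$ of Remark \ref{Remark 3.2}, whose hypotheses are $\frac{\alpha+\beta}{\alpha_{1}+\beta_{1}}\geq1$ and $\beta\geq\beta_{1}$. These are not implied by --- and in fact conflict with --- the conditions of Theorem \ref{Theorem 3.1}, which as shown force $\beta\leq\beta_{1}$ and $\alpha_{1}+\beta_{1}\geq\frac{\beta}{\beta_{1}}(\alpha+\beta)$; the two sets of conditions are simultaneously satisfiable only in the degenerate case $\alpha=\alpha_{1}$, $\beta=\beta_{1}$. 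The purely temporal part of your argument (H\"older on $(0,T)$ applied to $u$ and $du/dt$, using $p_{0}\geq\widetilde{p}_{0}$, $p_{1}\geq\widetilde{p}_{1}$, $X\subseteq\widetilde{X}$) is fine. In fairness, the paper states the corollary under hypotheses that do not obviously support the first inclusion either; but a proof must either verify the hypotheses of the lemmas it cites or flag them as additional assumptions, and yours does neither.
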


\begin{remark}
\label{Remark 3.3}If $\alpha \geq 0$, $\frac{\alpha _{1}}{\beta _{1}}>-1$, $%
\frac{1}{2}\beta _{1}=\beta >\frac{n}{n-1}$ such numbers that $\alpha +\beta
=\alpha _{1}+\beta _{1}$, $\alpha >\beta -1$, then 
\begin{equation*}
S_{\Delta ,\alpha ,\beta }\left( \Omega \right) \iff \left\{ u\left(
x\right) \left\vert ~\eta \left( u\right) \equiv \left\vert u\right\vert
^{\rho }u\in W_{\beta }^{2}\left( \Omega \right) \right. \ \rho =\frac{%
\alpha }{\beta }\right\} ,
\end{equation*}%
i.e. 
\begin{equation*}
u\in S_{\Delta ,\alpha ,\beta }\left( \Omega \right) \Longrightarrow v\equiv
\eta \left( u\right) \equiv \left\vert u\right\vert ^{\rho }u\in W_{\beta
}^{2}\left( \Omega \right) \Longrightarrow
\end{equation*}%
\begin{equation*}
u\equiv \eta ^{-1}\left( v\right) \equiv \left\vert v\right\vert ^{-\frac{%
\rho }{\rho +1}}v\in S_{\Delta ,\alpha ,\beta }\left( \Omega \right)
\end{equation*}%
under the conditions (see, \cite{M} and also \cite{S3, S4}) that all
operations make a seinse.

Furthermore, note that $S_{1,\alpha ,\beta }\left( \Omega \right) $ and $%
S_{\Delta ,\alpha ,\beta }\left( \Omega \right) $ are metric spaces (\cite%
{S3, S4, S5}) with the corresponding metrics of the form: 
\begin{equation*}
d_{S_{1,\alpha ,\beta }\left( \Omega \right) }\left( u;v\right) \equiv
\left\Vert \eta \left( u\right) -\eta \left( v\right) \right\Vert _{W_{\beta
}^{1}\left( \Omega \right) }^{\left( \rho +1\right) ^{-1}},\quad \eta \left(
\tau \right) \equiv \left\vert \tau \right\vert ^{\rho }\tau ,\rho =\frac{%
\alpha }{\beta },\alpha \geq 0,\beta >1,
\end{equation*}%
\begin{equation*}
d_{S_{\Delta ,\alpha ,\beta }\left( \Omega \right) }\left( u;v\right) \equiv
\left\Vert \eta _{1}\left( u\right) -\eta _{1}\left( v\right) \right\Vert
_{W_{\beta _{1}}^{1}\left( \Omega \right) }^{\left( \rho _{1}+1\right)
^{-1}}+\left\Vert \left\vert u\right\vert ^{\rho }\Delta u-\left\vert
v\right\vert ^{\rho }\Delta v\right\Vert _{L_{\beta }\left( \Omega \right)
}^{\left( \rho +1\right) ^{-1}},
\end{equation*}%
where $\rho _{1}=\frac{\alpha _{1}}{\beta _{1}}$, $\eta _{1}\left( \tau
\right) \equiv \left\vert \tau \right\vert ^{\rho _{1}}\tau $, and $\alpha
_{1}+\beta _{1}=\alpha +\beta $ (see (\ref{Eqn 2.1})).

Moreover, it is not difficult to see that the metrics of spaces $\overset{0}{%
S}_{1,\alpha ,\beta }\left( \Omega \right) $ and $\overset{0}{S}_{\Delta
,\alpha ,\beta }\left( \Omega \right) $ have the form: 
\begin{equation*}
d_{\overset{0}{S}_{1,\alpha ,\beta }\left( \Omega \right) }\left( u;v\right)
\equiv \left\Vert \left\vert \left\vert u\right\vert ^{\rho }\nabla
u-\left\vert v\right\vert ^{\rho }\nabla v\right\vert \right\Vert _{L_{\beta
}\left( \Omega \right) }^{\left( \rho +1\right) ^{-1}};
\end{equation*}%
\begin{equation*}
d_{\overset{0}{S}_{\Delta ,\alpha ,\beta }\left( \Omega \right) }\left(
u;v\right) \equiv \left\Vert \left\vert u\right\vert ^{\rho }\Delta
u-\left\vert v\right\vert ^{\rho }\Delta v\right\Vert _{L_{\beta }\left(
\Omega \right) }^{\left( \rho +1\right) ^{-1}}
\end{equation*}%
correspondently.
\end{remark}

Based on Theorem \ref{Theorem 3.1}, Corollary \ref{Corollary 3.3} and
Embedding Theorems for the Sobolev spaces we prove the following:

\begin{theorem}
\label{Theorem 3.2}Let $\alpha ,\alpha _{1}\geq 0$, $\beta >\frac{n}{n-1}$, $%
\beta _{1}\geq 1$ be such numbers that $\beta _{1}<\frac{n\beta }{n-\beta }$%
, $\beta <n$\ , $\alpha _{1}+\beta _{1}<\frac{n\left( \alpha +\beta \right) 
}{n-\beta }$ and $\alpha \beta _{1}\geq \alpha _{1}\beta $, $\alpha >\beta
-1 $. Then, the inclusion 
\begin{equation*}
\overset{0}{S}_{\Delta ,\alpha ,\beta }\left( \Omega \right) \subset \overset%
{0}{S}_{1,\alpha _{1},\beta _{1}}\left( \Omega \right)
\end{equation*}

is compact.
\end{theorem}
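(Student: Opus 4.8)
The plan is to linearise the two pn-spaces by the nonlinear substitutions already introduced in the text, reducing the assertion to a compactness statement for Sobolev spaces together with a continuity property of a superposition operator. By Corollary \ref{Corollary 3.2} and Remark \ref{Remark 3.3} the map $\eta(u)\equiv\left\vert u\right\vert ^{\alpha/\beta}u$ is a homeomorphism of $\overset{0}{S}_{\Delta,\alpha,\beta}(\Omega)$ onto $W_{\beta}^{2}(\Omega)\cap\overset{0}{W}_{\beta}^{1}(\Omega)$, while $\eta_{1}(u)\equiv\left\vert u\right\vert ^{\alpha_{1}/\beta_{1}}u$ is a homeomorphism of $\overset{0}{S}_{1,\alpha_{1},\beta_{1}}(\Omega)$ onto $\overset{0}{W}_{\beta_{1}}^{1}(\Omega)$, the metrics being those displayed in Remark \ref{Remark 3.3}. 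Writing $\rho=\alpha/\beta$, $\rho_{1}=\alpha_{1}/\beta_{1}$ and $v=\eta(u)$, so that $u=\left\vert v\right\vert ^{-\rho/(\rho+1)}v$, the composite $T\equiv\eta_{1}\circ\eta^{-1}$ is again a superposition, $T(v)=\left\vert v\right\vert ^{\delta}v$ with $\delta=\frac{\rho_{1}-\rho}{\rho+1}$, and the claimed compact inclusion is equivalent to compactness of $T:W_{\beta}^{2}(\Omega)\cap\overset{0}{W}_{\beta}^{1}(\Omega)\to\overset{0}{W}_{\beta_{1}}^{1}(\Omega)$. Note $\delta+1=\frac{\beta(\alpha_{1}+\beta_{1})}{\beta_{1}(\alpha+\beta)}>0$, so $\delta>-1$, and the hypothesis $\alpha\beta_{1}\geq\alpha_{1}\beta$ gives $\rho_{1}\leq\rho$, i.e. $\delta\leq0$.

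Next I would extract a convergent subsequence by Rellich--Kondrachov. Take $\{u_{m}\}$ bounded in $\overset{0}{S}_{\Delta,\alpha,\beta}(\Omega)$; then $v_{m}=\eta(u_{m})$ is bounded in $W_{\beta}^{2}(\Omega)\cap\overset{0}{W}_{\beta}^{1}(\Omega)$. Since $\beta<n$ and $\beta_{1}<\frac{n\beta}{n-\beta}\equiv\beta^{\ast}$, the embedding $W_{\beta}^{2}\cap\overset{0}{W}_{\beta}^{1}\hookrightarrow W_{\beta_{1}}^{1}$ is compact, so along a subsequence $v_{m}\to v$ strongly in $W_{\beta_{1}}^{1}(\Omega)$, whence $v_{m}\to v$ and $\nabla v_{m}\to\nabla v$ both in $L_{\beta_{1}}(\Omega)$ and a.e.; moreover the Sobolev inequalities keep $\nabla v_{m}$ bounded in $L_{\beta^{\ast}}(\Omega)$ and $v_{m}$ bounded in the corresponding second-order Lebesgue exponent. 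Correspondingly $u_{m}=\eta^{-1}(v_{m})\to u=\eta^{-1}(v)$ a.e. It remains to prove convergence of the target density, i.e. $\left\vert u_{m}\right\vert ^{\rho_{1}}\nabla u_{m}\to\left\vert u\right\vert ^{\rho_{1}}\nabla u$ in $L_{\beta_{1}}(\Omega)$ (equivalently $\nabla T(v_{m})\to\nabla T(v)$ in $L_{\beta_{1}}$), which is exactly convergence in the metric of $\overset{0}{S}_{1,\alpha_{1},\beta_{1}}(\Omega)$.

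The decisive step is a uniform--integrability (higher--integrability) bound, and this is where the strictness of the hypotheses is used. Because $\beta_{1}<\beta^{\ast}$ and $\alpha_{1}+\beta_{1}<\frac{n(\alpha+\beta)}{n-\beta}$ are strict, one may fix $\varepsilon>0$ so small that the boosted exponents $\tilde\alpha_{1}=(1+\varepsilon)\alpha_{1}$, $\tilde\beta_{1}=(1+\varepsilon)\beta_{1}$ still satisfy $\tilde\beta_{1}<\beta^{\ast}$ and $\tilde\alpha_{1}+\tilde\beta_{1}<\frac{n(\alpha+\beta)}{n-\beta}$, while $\alpha\tilde\beta_{1}\geq\tilde\alpha_{1}\beta$ and $\alpha>\beta-1$ are preserved. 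For such subcritical exponents I would obtain a bound $\int_{\Omega}\left\vert u_{m}\right\vert ^{\tilde\alpha_{1}}\left\vert \nabla u_{m}\right\vert ^{\tilde\beta_{1}}dx\leq C$ by combining Theorem \ref{Theorem 3.1} with the auxiliary inequalities (\ref{Eqn 3.10}), (\ref{Eqn 3.11}), (\ref{Eqn 3.22}): using $\alpha>\beta-1$, inequality (\ref{Eqn 3.22}) with weight shifted by $-\beta$ gives $\int\left\vert u_{m}\right\vert ^{\alpha-\beta}\left\vert \nabla u_{m}\right\vert ^{2\beta}dx\leq c\int\left\vert u_{m}\right\vert ^{\alpha}\left\vert \Delta u_{m}\right\vert ^{\beta}dx\leq C$, so $\{u_{m}\}$ is bounded in $S_{1,\alpha-\beta,2\beta}(\Omega)$ (the total exponent $\alpha+\beta$ being preserved); the Sobolev embedding of the corresponding $W_{2\beta}^{1}$-image followed by the interpolation (\ref{Eqn 3.11}) between this space and the source then controls $\int\left\vert u_{m}\right\vert ^{\tilde\alpha_{1}}\left\vert \nabla u_{m}\right\vert ^{\tilde\beta_{1}}dx$. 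Consequently $\{\left\vert u_{m}\right\vert ^{\alpha_{1}}\left\vert \nabla u_{m}\right\vert ^{\beta_{1}}\}$ is bounded in $L_{1+\varepsilon}(\Omega)$, hence uniformly integrable.

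Finally I would pass to the limit by Vitali's theorem. On $\{v\neq0\}$ the continuity of $\tau\mapsto\left\vert \tau\right\vert ^{\rho_{1}}$ together with $u_{m}\to u$ and $\nabla v_{m}\to\nabla v$ a.e. gives $\left\vert u_{m}\right\vert ^{\rho_{1}}\nabla u_{m}\to\left\vert u\right\vert ^{\rho_{1}}\nabla u$ a.e.; on the degeneracy set $\{v=0\}=\{u=0\}$ the limiting density vanishes a.e. (there $\nabla v=0$ a.e.), and its contribution is absorbed by the uniform integrability just established. Vitali's theorem then yields $\left\vert u_{m}\right\vert ^{\rho_{1}}\nabla u_{m}\to\left\vert u\right\vert ^{\rho_{1}}\nabla u$ in $L_{\beta_{1}}(\Omega)$, i.e. $u_{m}\to u$ in $\overset{0}{S}_{1,\alpha_{1},\beta_{1}}(\Omega)$, which is the asserted compactness. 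The main obstacles I anticipate are precisely the higher--integrability estimate of the previous paragraph, where the strict (as opposed to borderline) Sobolev conditions are indispensable since equality would give only a continuous, non-compact inclusion, and the careful treatment of the degeneracy set $\{v=0\}$, on which $T$ fails to be differentiable and the pointwise limit of the densities is \emph{a priori} indeterminate.
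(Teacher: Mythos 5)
Your opening steps are exactly the paper's proof: by Corollary \ref{Corollary 3.2} a bounded sequence $\{u_{m}\}\subset \overset{0}{S}_{\Delta ,\alpha ,\beta }\left( \Omega \right) $ yields $v_{m}=\eta \left( u_{m}\right) $ bounded in $W_{\beta }^{2}\left( \Omega \right) \cap \overset{0}{W}\/_{\beta }^{1}\left( \Omega \right) $, and Rellich--Kondrachov with $\beta _{1}<\frac{n\beta }{n-\beta }$ gives $v_{m}\rightarrow v$ strongly in $W_{\beta _{1}}^{1}\left( \Omega \right) $ along a subsequence. You are in fact more scrupulous than the paper at the decisive point: the paper writes ``$\eta _{1}\left( u\right) \equiv v$'', tacitly identifying $\eta _{1}$ with $\eta $, and declares the theorem proved, whereas you correctly observe that for $\alpha \beta _{1}>\alpha _{1}\beta $ the two substitutions differ, so that what still has to be shown is that the superposition $T\left( v\right) =\left\vert v\right\vert ^{\delta }v$, $\delta =\frac{\rho _{1}-\rho }{\rho +1}\in \left( -1,0\right] $, carries the strong $W_{\beta _{1}}^{1}$-convergence of $v_{m}$ into convergence of $T\left( v_{m}\right) =\eta _{1}\left( u_{m}\right) $ (in the equality case $\alpha \beta _{1}=\alpha _{1}\beta $ this is vacuous and both arguments are complete).

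The genuine gap is in your final (Vitali) step, precisely at the obstacle you name and then dismiss. Vitali's theorem needs convergence a.e. (or in measure) of the densities $g_{m}=\left\vert v_{m}\right\vert ^{\delta }\nabla v_{m}$ on all of $\Omega $; uniform integrability controls integrals over sets of \emph{small} measure only, and the degeneracy set $\left\{ v=0\right\} $ may have positive measure, so nothing ``absorbs'' its contribution. Indeed the implication you actually use --- $v_{m}\rightarrow v$ strongly in $W_{\beta _{1}}^{1}$ plus uniform integrability of $\left\vert g_{m}\right\vert ^{\beta _{1}}$ imply $g_{m}\rightarrow \left\vert v\right\vert ^{\delta }\nabla v$ in $L_{\beta _{1}}$ --- is false: take (locally, with a cutoff) $v_{m}\left( x\right) =m^{-1/\left( 1+\delta \right) }\sin \left( mx_{1}\right) $ with $\left\vert \delta \right\vert \beta _{1}<1$; then $v_{m}\rightarrow 0$ even in $W_{\infty }^{1}$, $\left\vert g_{m}\right\vert =\left\vert \sin \left( mx_{1}\right) \right\vert ^{\delta }\left\vert \cos \left( mx_{1}\right) \right\vert $ is bounded in $L_{\beta _{1}\left( 1+\varepsilon \right) }$, hence uniformly integrable, yet $\int \left\vert g_{m}\right\vert ^{\beta _{1}}dx$ tends to a positive constant while the limit density is $0$. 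For $\delta \leq -\frac{1}{2}$ this $v_{m}$ is even bounded in $W_{\beta }^{2}\cap \overset{0}{W}\/_{\beta }^{1}$, which also shows that your opening reduction (``the theorem is \emph{equivalent} to compactness of $T$ on $W_{\beta }^{2}\cap \overset{0}{W}\/_{\beta }^{1}$'') overshoots: such $v_{m}$ are not of the form $\eta \left( u\right) $ with $\int \left\vert u\right\vert ^{\alpha }\left\vert \Delta u\right\vert ^{\beta }dx<\infty $, and a correct proof must feed the $\Delta $-bound (or the $S_{1}$-part built into definition (\ref{Eqn 3.2})) into the limit passage itself, e.g.\ through an estimate of $\int_{\left\{ \left\vert u_{m}\right\vert \leq \epsilon \right\} }\left\vert u_{m}\right\vert ^{\alpha _{1}}\left\vert \nabla u_{m}\right\vert ^{\beta _{1}}dx$ by a pair with ratio strictly below $\alpha _{1}/\beta _{1}$, not merely through uniform integrability. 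A secondary weak point: your boosted bound cannot be obtained the way you describe, since (\ref{Eqn 3.11}) transfers estimates only to pairs with the \emph{larger} ratio ($\alpha _{0}\beta _{1}\geq \alpha _{1}\beta _{0}$) whereas your target ratio $\rho _{1}$ is $\leq \rho $, and Theorem \ref{Theorem 3.1} assumes the lower bound $\frac{\alpha _{1}+\beta _{1}}{\alpha +\beta }\geq \beta \beta _{1}^{-1}$, which the exponents $\left( 1+\varepsilon \right) \alpha _{1},\left( 1+\varepsilon \right) \beta _{1}$ need not satisfy. Within the paper's own toolkit the cleaner route is an Ehrling-type inequality as in Lemma \ref{Lemma 3.4} combined with the compact embedding $\overset{0}{S}_{\Delta ,\alpha ,\beta }\left( \Omega \right) \subset L_{p}\left( \Omega \right) $ of Remark \ref{Remark 3.2}: an $L_{p}$-convergent subsequence is then automatically Cauchy in $d_{\overset{0}{S}_{1,\alpha _{1},\beta _{1}}}$, and no pointwise passage on $\left\{ v=0\right\} $ is ever needed --- though to avoid circularity that inequality must itself be proved without invoking the compactness being established.
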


\begin{proof}
Since $u\in \overset{0}{S}_{\Delta ,\alpha ,\beta }\left( \Omega \right) ,$
we have $\eta \left( u\right) \equiv v\in W_{\beta }^{2}\left( \Omega
\right) \cap \overset{0}{W}$\/$_{\beta }^{1}\left( \Omega \right) $ and $%
u\in \overset{0}{S}_{1,\alpha _{1},\beta _{1}}\left( \Omega \right)
\Longleftarrow \Longrightarrow \eta _{1}\left( u\right) \equiv v\in \overset{%
0}{W}$\/$_{\beta _{1}}^{1}\left( \Omega \right) $ ($\eta _{1}\left( u\right)
\equiv \left\vert u\right\vert ^{\rho _{1}}u,\ \rho _{1}=\frac{\alpha _{1}}{%
\beta _{1}}$). Moreover, as far as $W_{\beta }^{2}\left( \Omega \right)
\subset W~_{\beta _{1}}^{1}\left( \Omega \right) $ is compact for $\beta
_{1}<\frac{n\beta }{n-\beta }$, we get the compactness of the inclusion $%
\eta \left( G\right) \subset \overset{0}{W}$\/$_{\beta _{1}}^{1}\left(
\Omega \right) $ for any bounded subset $G$ from $\overset{0}{S}_{\Delta
,\alpha ,\beta }\left( \Omega \right) $. This implies the desired statement
\end{proof}

\begin{corollary}
\label{Corollary 3.4}If $0<\rho \leq 2,$ then $\overset{0}{S}_{\Delta ,\rho
,2}\left( \Omega \right) \subset \overset{0}{W}$\/$_{p}^{1}\left( \Omega
\right) $, $p=\rho +2$.
\end{corollary}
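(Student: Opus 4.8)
The plan is to identify the target as a space in the $S_{1}$-scale and then read off the inclusion from the machinery already assembled. Observe first that in the seminorm (\ref{Eqn 3.1}) the choice $\alpha_{1}=0$ gives $[u]_{S_{1}}^{\beta_{1}}=\int_{\Omega}(|u|^{\beta_{1}}+|\nabla u|^{\beta_{1}})\,dx$, which is precisely the norm of $W_{\beta_{1}}^{1}(\Omega)$; since the homogeneous condition $u|_{\partial\Omega}=0$ is imposed on both sides, proving Corollary \ref{Corollary 3.4} amounts to the set inclusion $\overset{0}{S}_{\Delta,\rho,2}(\Omega)\subset \overset{0}{S}_{1,0,\rho+2}(\Omega)$ with $\beta_{1}=p=\rho+2$. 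This is exactly Theorem \ref{Theorem 3.1} read with $\alpha=\rho$, $\beta=2$, $\alpha_{1}=0$, $\beta_{1}=\rho+2$, whose structural hypotheses $\frac{\alpha_{1}+\beta_{1}}{\alpha+\beta}=1\ge \frac{2}{\rho+2}=\beta\beta_{1}^{-1}$ and $\alpha\beta_{1}=\rho(\rho+2)\ge 0=\alpha_{1}\beta$ are automatic.

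Rather than invoke Theorem \ref{Theorem 3.1} as a black box (its statement also carries the restriction $\alpha>\beta-1$, i.e. $\rho>1$, which I would like to relax), I would verify the two defining integrals of $\overset{0}{W}_{\rho+2}^{1}(\Omega)$ by hand. The membership $u\in L_{\rho+2}(\Omega)$ is immediate from Proposition \ref{Proposition 3.2} with $(\alpha,\beta)=(\rho,2)$, which yields $\int_{\Omega}|u|^{\rho+2}\,dx\le c\int_{\Omega}|u|^{\rho}|\Delta u|^{2}\,dx<\infty$ for every $u\in\overset{0}{S}_{\Delta,\rho,2}(\Omega)$.

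The gradient estimate is the crux. Applying Corollary \ref{Corollary 3.1}, i.e. inequality (\ref{Eqn 3.22}), with parameters $(0,\tfrac{\rho+2}{2})$ in place of $(\alpha,\beta)$ gives $\int_{\Omega}|\nabla u|^{\rho+2}\,dx\le c\int_{\Omega}|u|^{(\rho+2)/2}|\Delta u|^{(\rho+2)/2}\,dx$. I would then control the right-hand integrand by the weighted Young inequality with weight $\theta=\tfrac{\rho+2}{4}$, writing $|u|^{(\rho+2)/2}|\Delta u|^{(\rho+2)/2}=\bigl(|u|^{\rho}|\Delta u|^{2}\bigr)^{\theta}\bigl(|u|^{\rho+2}\bigr)^{1-\theta}\le \theta\,|u|^{\rho}|\Delta u|^{2}+(1-\theta)\,|u|^{\rho+2}$; the powers of $|\Delta u|$ and of $|u|$ match exactly, and the admissibility $\theta\le 1$ is precisely the hypothesis $\rho\le 2$ (with $\theta=1$, i.e. $\rho=2$, the splitting is trivial). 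Integrating, both terms are finite --- the first by definition of the space, the second by the previous paragraph --- so $\nabla u\in L_{\rho+2}(\Omega)$, and together with $u\in L_{\rho+2}(\Omega)$ and $u|_{\partial\Omega}=0$ this is the assertion.

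I expect the only real obstacle to be the hypothesis $\beta>\tfrac{n}{n-1}$ built into Lemma \ref{Lemma 3.1} and Corollary \ref{Corollary 3.1}: with $\beta=\tfrac{\rho+2}{2}$ it becomes $\rho>\tfrac{2}{n-1}$, so for $n\ge 4$ the whole range $\tfrac{2}{n-1}<\rho\le2$ is covered, for $n=3$ one needs $\rho>1$, and the remaining low-dimensional corner must be referred to the one-dimensional results cited in the footnote. All the genuine analysis --- dominating the degenerate gradient $|\nabla u|^{2\beta}$ by the degenerate Laplacian $|u|^{\beta}|\Delta u|^{\beta}$ through the local maximal-function bounds --- is already contained in Lemma \ref{Lemma 3.1}; the step from there to Corollary \ref{Corollary 3.4} is only the exponent bookkeeping and the single application of Young's inequality described above.
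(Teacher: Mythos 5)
Your core computation is correct and is, in substance, the route the paper intends: the paper states Corollary \ref{Corollary 3.4} without proof, as an instance of the preceding embedding machinery, and the natural derivation is exactly your combination --- inequality (\ref{Eqn 3.22}) with $(\alpha ,\beta )$ replaced by $\left( 0,\frac{\rho +2}{2}\right) $, the weighted Young inequality with weight $\theta =\frac{\rho +2}{4}$ (admissible precisely when $\rho \leq 2$), and Proposition \ref{Proposition 3.2} for the $L_{\rho +2}$ term. Your choice to redo this by hand rather than cite Theorem \ref{Theorem 3.1} as a black box is also a real improvement, since it removes that theorem's hypothesis $\alpha >\beta -1$ (i.e.\ $\rho >1$).

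The gap is in your final paragraph, and it is genuine. Corollary \ref{Corollary 3.1} requires $\beta >\frac{n}{n-1}$, so with $\beta =\frac{\rho +2}{2}$ your gradient estimate is available only for $\rho >\frac{2}{n-1}$. The uncovered set is not a ``low-dimensional corner'' that can be referred to the footnote: the footnote's results concern $n=1$ only, while your argument leaves open the range $0<\rho \leq \frac{2}{n-1}$ in \emph{every} dimension $n\geq 2$. In particular, for $n=2$ it proves nothing at all, since there $\frac{2}{n-1}=2$ and the whole interval $0<\rho \leq 2$ is excluded; for $n=3$ the interval $0<\rho \leq 1$ remains open. The restriction is structural, not bookkeeping: it descends from the $L_{p}$-boundedness of the spherical maximal operator, valid only for $p>\frac{n}{n-1}$, on which Lemma \ref{Lemma 3.1} rests, so no rearrangement of exponents on top of (\ref{Eqn 3.22}) can remove it. To be fair, the paper is no better off --- its own Theorems \ref{Theorem 3.1} and \ref{Theorem 3.2} carry the same restriction $\beta >\frac{n}{n-1}$ (and, in addition, $\alpha >\beta -1$), so the full range $0<\rho \leq 2$ claimed in Corollary \ref{Corollary 3.4} is not actually covered by the paper's machinery either --- but as a proof of the corollary as stated, your argument is incomplete on that range, and the closing sentence of your proposal papers over this defect rather than flagging it.
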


Mentioned above and known results (\cite{L, LM, S3, S4}) allow us to prove
the the compact embeddings for the following vector spaces: $L_{p}\left(
0,T;S_{1,\alpha _{1},\beta _{1}}\left( \Omega \right) \right) $, $%
P_{1,p_{0},p_{1}}\left( 0,T;S_{\Delta ,\alpha ,\beta }\left( \Omega \right)
;X\right) $. We need the following

\begin{lemma}
\label{Lemma 3.3}Let $\alpha ,\alpha _{1},\alpha _{2}\geq 0$, $\beta ,\beta
_{1},\beta _{2}\geq 1$, $2\beta \geq \widetilde{\beta }\geq 1$ be such
numbers that $\alpha +\beta =\alpha _{1}+\beta _{1}=\alpha _{2}+\beta _{2}$, 
$\beta _{1}<\beta <\beta _{2}$, $1\leq \widetilde{\beta _{1}}<\widetilde{%
\beta }<\widetilde{\beta _{2}}$. Then, for any $\varepsilon >0$ there exists 
$c\left( \varepsilon \right) >0$ such that the inequality 
\begin{equation*}
\left[ u\right] _{S_{\Delta ,\alpha ,\beta }}\leq \varepsilon \left[ u\right]
_{S_{\Delta ,\alpha _{2},\beta _{2}}}+c\left( \varepsilon \right) \left[ u%
\right] _{S_{\Delta ,\alpha _{1},\beta _{1}}},\ \forall u\in S_{\Delta
,\alpha _{2},\beta _{2}}\left( \Omega \right)
\end{equation*}%
holds.
\end{lemma}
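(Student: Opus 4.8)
The plan is to reduce the seminorm inequality to a log-convexity (interpolation) estimate for each of the integral terms that constitute $[\cdot]_{S_{\Delta,\alpha,\beta}}$, and then to recombine by taking $(\alpha+\beta)$-th roots. Recall that, with $\gamma\equiv\alpha+\beta=\alpha_1+\beta_1=\alpha_2+\beta_2$, one has
\[
[u]_{S_{\Delta,\alpha,\beta}}^{\gamma}=\int_\Omega|u|^{\gamma}\,dx+\int_\Omega|u|^{\widetilde\alpha}|\nabla u|^{\widetilde\beta}\,dx+\int_\Omega|u|^{\alpha}|\Delta u|^{\beta}\,dx,
\]
where $\widetilde\alpha=\gamma-\widetilde\beta$ is the companion gradient exponent (and likewise $\widetilde\alpha_i=\gamma-\widetilde\beta_i$ for the low and high spaces), so it suffices to estimate each of the three integrals separately.

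First I would record the elementary algebraic fact driving the whole argument. Since $\beta_1<\beta<\beta_2$, set $\theta=(\beta_2-\beta)/(\beta_2-\beta_1)\in(0,1)$, so that $\beta=\theta\beta_1+(1-\theta)\beta_2$; because all three pairs share the \emph{same} sum $\gamma$, the exponent of $|u|$ is automatically the same convex combination, $\alpha=\gamma-\beta=\theta\alpha_1+(1-\theta)\alpha_2$. Hence pointwise
\[
|u|^{\alpha}|\Delta u|^{\beta}=\left(|u|^{\alpha_1}|\Delta u|^{\beta_1}\right)^{\theta}\left(|u|^{\alpha_2}|\Delta u|^{\beta_2}\right)^{1-\theta}.
\]
The main step is then Hölder's inequality with conjugate exponents $1/\theta$ and $1/(1-\theta)$ applied to this product, yielding the log-convexity bound
\[
\int_\Omega|u|^{\alpha}|\Delta u|^{\beta}\,dx\le\left(\int_\Omega|u|^{\alpha_1}|\Delta u|^{\beta_1}\,dx\right)^{\theta}\left(\int_\Omega|u|^{\alpha_2}|\Delta u|^{\beta_2}\,dx\right)^{1-\theta},
\]
after which Young's inequality with a parameter $\varepsilon$ splits the product into $\varepsilon\,(\text{high integral})+c(\varepsilon)\,(\text{low integral})$. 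This is the heart of the lemma.

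The gradient term $\int_\Omega|u|^{\widetilde\alpha}|\nabla u|^{\widetilde\beta}\,dx$ is handled by the identical mechanism: using $\widetilde\beta_1<\widetilde\beta<\widetilde\beta_2$ with common sum $\gamma$, I would write $(\widetilde\alpha,\widetilde\beta)$ as the convex combination $\widetilde\theta(\widetilde\alpha_1,\widetilde\beta_1)+(1-\widetilde\theta)(\widetilde\alpha_2,\widetilde\beta_2)$, factor as above, and apply Hölder--Young once more; here the constraint $2\beta\ge\widetilde\beta$ is precisely what permits, when needed, to dominate the gradient contribution by the Laplacian one through Corollary \ref{Corollary 3.1} (inequality (\ref{Eqn 3.22})) together with (\ref{Eqn 3.11}). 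The remaining term $\int_\Omega|u|^{\gamma}\,dx$ is common to all three seminorms (as $\widetilde\alpha+\widetilde\beta=\alpha+\beta=\gamma$), so it is trivially absorbed into the low-space term, being in any case controlled by Proposition \ref{Proposition 3.2}.

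Finally I would assemble the three estimates for the $\gamma$-th powers, obtaining $[u]_{S_{\Delta,\alpha,\beta}}^{\gamma}\le\varepsilon'[u]_{S_{\Delta,\alpha_2,\beta_2}}^{\gamma}+c'(\varepsilon')[u]_{S_{\Delta,\alpha_1,\beta_1}}^{\gamma}$, and pass to the seminorms. Since $\gamma=\alpha+\beta\ge1$, the elementary inequality $(a+b)^{1/\gamma}\le a^{1/\gamma}+b^{1/\gamma}$ converts this into the stated bound, after relabelling $\varepsilon=(\varepsilon')^{1/\gamma}$ (which can still be made arbitrarily small) and $c(\varepsilon)=(c'(\varepsilon'))^{1/\gamma}$. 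The only genuinely new computation is the log-convexity of the Laplacian integral; I expect the principal nuisance to be the bookkeeping that keeps the $\Delta$-exponents and the $\nabla$-exponents (the tilde data) mutually consistent throughout, and checking in the degenerate edge cases (e.g.\ $\widetilde\beta=2\beta$, or $\alpha_1/\beta_1$ close to $-1$) that the exponents produced by Young's inequality remain admissible.
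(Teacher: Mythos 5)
Your proof is correct, and it fills in exactly what the paper leaves out: the paper's entire proof of this lemma is the sentence ``The proof is obvious,'' and the argument the authors evidently have in mind is precisely your Hölder--Young interpolation, exploiting that equality of the sums $\alpha+\beta=\alpha_1+\beta_1=\alpha_2+\beta_2$ makes $(\alpha,\beta)$ a convex combination of $(\alpha_1,\beta_1)$ and $(\alpha_2,\beta_2)$ term by term, followed by the $\gamma$-th root via subadditivity of $t\mapsto t^{1/\gamma}$. One small remark: your appeal to Corollary \ref{Corollary 3.1} and (\ref{Eqn 3.11}) for the gradient term is unnecessary, since the same pointwise factorization and Hölder--Young step you use for the Laplacian term already handles it; the hypotheses $2\beta\geq\widetilde{\beta}$ and $\alpha_i\geq 0$ play no role in the interpolation itself.
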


The proof is obvious.

\begin{lemma}
\label{Lemma 3.4}Let $\alpha ,\alpha _{1}\geq 0$, $\beta ,\beta _{0},p\geq 1$%
, $\frac{\beta _{1}}{2}\geq \beta >\frac{n}{n-1}$ be such numbers that $%
\beta _{0}<\frac{n\beta }{n-\beta }$, $\beta <n$\ , $\alpha _{0}+\beta _{0}<%
\frac{n\left( \alpha +\beta \right) }{n-\beta }$ and $\alpha \beta _{0}\geq
\alpha _{0}\beta $, $\alpha >\beta -1$, $p\leq \alpha +\beta $. Then, for
any $\varepsilon >0$ there exists $c\left( \varepsilon \right) >0$ such that
the inequality 
\begin{equation*}
d_{S_{1,\alpha _{0},\beta _{0}}}\left( u;v\right) \leq \varepsilon \left( 
\left[ u\right] _{S_{\Delta ,\alpha ,\beta }}+\left[ v\right] _{S_{\Delta
,\alpha ,\beta }}\right) +c\left( \varepsilon \right) \left\Vert
u-v\right\Vert _{L_{p}},\ \forall u,v\in S_{\Delta ,\alpha _{2},\beta
_{2}}\left( \Omega \right)
\end{equation*}%
holds
\end{lemma}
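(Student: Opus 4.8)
The plan is to establish this as an Ehrling--Lions type interpolation inequality, exploiting the two compact embeddings already available: $\overset{0}{S}_{\Delta,\alpha,\beta}\left(\Omega\right)\hookrightarrow\hookrightarrow\overset{0}{S}_{1,\alpha_{0},\beta_{0}}\left(\Omega\right)$ from Theorem \ref{Theorem 3.2} (whose hypotheses coincide exactly with those imposed here, reading $\alpha_{0},\beta_{0}$ for $\alpha_{1},\beta_{1}$), and $\overset{0}{S}_{\Delta,\alpha,\beta}\left(\Omega\right)\hookrightarrow\hookrightarrow L_{p}\left(\Omega\right)$ from Remark \ref{Remark 3.2} (valid since $p\le\alpha+\beta$). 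Before running the argument I would record two structural facts. Writing $\eta_{0}\left(u\right)\equiv\left\vert u\right\vert ^{\rho_{0}}u$ with $\rho_{0}=\alpha_{0}/\beta_{0}$, Remark \ref{Remark 3.3} gives $d_{S_{1,\alpha_{0},\beta_{0}}}\left(u;v\right)=\left\Vert \eta_{0}\left(u\right)-\eta_{0}\left(v\right)\right\Vert _{W_{\beta_{0}}^{1}\left(\Omega\right)}^{1/\left(\rho_{0}+1\right)}$; since $0<1/\left(\rho_{0}+1\right)\le1$ the map $t\mapsto t^{1/\left(\rho_{0}+1\right)}$ is subadditive, so $d_{S_{1,\alpha_{0},\beta_{0}}}$ genuinely satisfies the triangle inequality, and convergence in this metric is equivalent to convergence of $\eta_{0}\left(\cdot\right)$ in $W_{\beta_{0}}^{1}\left(\Omega\right)$. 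Moreover each of the three quantities $d_{S_{1,\alpha_{0},\beta_{0}}}\left(u;v\right)$, $\left[u\right]_{S_{\Delta,\alpha,\beta}}+\left[v\right]_{S_{\Delta,\alpha,\beta}}$ and $\left\Vert u-v\right\Vert _{L_{p}}$ is positively homogeneous of degree one under the simultaneous scaling $\left(u,v\right)\mapsto\left(\lambda u,\lambda v\right)$: indeed $\eta_{0}\left(\lambda u\right)=\lambda\left\vert \lambda\right\vert ^{\rho_{0}}\eta_{0}\left(u\right)$ forces $d_{S_{1,\alpha_{0},\beta_{0}}}\left(\lambda u;\lambda v\right)=\left\vert \lambda\right\vert d_{S_{1,\alpha_{0},\beta_{0}}}\left(u;v\right)$, while $\left[\cdot\right]_{S_{\Delta,\alpha,\beta}}$ is degree-one because its $\left(\alpha+\beta\right)$-th power is.

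I would argue by contradiction. Suppose the inequality fails; then there exist $\varepsilon_{0}>0$ and sequences $u_{k},v_{k}\in S_{\Delta,\alpha_{2},\beta_{2}}\left(\Omega\right)$, hence in $\overset{0}{S}_{\Delta,\alpha,\beta}\left(\Omega\right)$ by the nesting $S_{\Delta,\alpha_{2},\beta_{2}}\subseteq S_{\Delta,\alpha,\beta}$ of Remark \ref{Remark 3.2} (valid since $\beta<\beta_{2}$ with $\alpha+\beta=\alpha_{2}+\beta_{2}$), such that
\[
d_{S_{1,\alpha_{0},\beta_{0}}}\left(u_{k};v_{k}\right)>\varepsilon_{0}\left(\left[u_{k}\right]_{S_{\Delta,\alpha,\beta}}+\left[v_{k}\right]_{S_{\Delta,\alpha,\beta}}\right)+k\left\Vert u_{k}-v_{k}\right\Vert _{L_{p}}.
\]
Whenever the left side vanishes the asserted inequality holds trivially, so I may assume it is positive and, invoking the degree-one homogeneity above, normalize $d_{S_{1,\alpha_{0},\beta_{0}}}\left(u_{k};v_{k}\right)=1$. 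The displayed inequality then yields $\left[u_{k}\right]_{S_{\Delta,\alpha,\beta}}+\left[v_{k}\right]_{S_{\Delta,\alpha,\beta}}<\varepsilon_{0}^{-1}$ and $\left\Vert u_{k}-v_{k}\right\Vert _{L_{p}}<k^{-1}\to0$.

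Thus $\left\{ u_{k}\right\} ,\left\{ v_{k}\right\} $ are bounded in $\overset{0}{S}_{\Delta,\alpha,\beta}\left(\Omega\right)$. Passing to a common subsequence, the compact embedding of Theorem \ref{Theorem 3.2} gives $u_{k}\to u$ and $v_{k}\to v$ in $\overset{0}{S}_{1,\alpha_{0},\beta_{0}}\left(\Omega\right)$, while Remark \ref{Remark 3.2} gives $u_{k}\to u$, $v_{k}\to v$ in $L_{p}\left(\Omega\right)$. Since $\left\Vert u_{k}-v_{k}\right\Vert _{L_{p}}\to0$, the two $L_{p}$-limits coincide, $u=v$. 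Using the triangle inequality for $d_{S_{1,\alpha_{0},\beta_{0}}}$ together with $d_{S_{1,\alpha_{0},\beta_{0}}}\left(u;v\right)=0$,
\[
d_{S_{1,\alpha_{0},\beta_{0}}}\left(u_{k};v_{k}\right)\le d_{S_{1,\alpha_{0},\beta_{0}}}\left(u_{k};u\right)+d_{S_{1,\alpha_{0},\beta_{0}}}\left(v;v_{k}\right)\to0,
\]
contradicting the normalization $d_{S_{1,\alpha_{0},\beta_{0}}}\left(u_{k};v_{k}\right)=1$. This contradiction proves the inequality.

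The main obstacle I anticipate is the bookkeeping around the metric rather than a genuine norm: one must confirm that $d_{S_{1,\alpha_{0},\beta_{0}}}$ obeys the triangle inequality (via subadditivity of the power $1/\left(\rho_{0}+1\right)$) and that it, together with the seminorm and the $L_{p}$-distance, is homogeneous of degree one under the simultaneous scaling, so that the normalization step is legitimate. One must also verify that the exponent conditions of Lemma \ref{Lemma 3.4} are precisely those under which Theorem \ref{Theorem 3.2} delivers the needed compact embedding into $\overset{0}{S}_{1,\alpha_{0},\beta_{0}}\left(\Omega\right)$ and that Remark \ref{Remark 3.2} supplies the compact embedding into $L_{p}\left(\Omega\right)$. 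Once these verifications are in place, the compactness argument closes in the routine way indicated above.
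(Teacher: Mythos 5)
Your proof is correct and is essentially the approach the paper itself intends: the paper omits the argument entirely, saying only that the proof is ``similar to the proof of the same type results from \cite{D, L, S3, S4}'' and ``based on the compactness of the inclusion $S_{\Delta ,\alpha _{2},\beta _{2}}\left( \Omega \right) \subset S_{1,\alpha _{1},\beta _{1}}\left( \Omega \right) \subset L_{p}\left( \Omega \right) $'', and those cited arguments are precisely your scheme of normalization by homogeneity followed by contradiction via the compact embeddings of Theorem \ref{Theorem 3.2} and Remark \ref{Remark 3.2}. Your preliminary verifications (triangle inequality for $d_{S_{1,\alpha _{0},\beta _{0}}}$ via subadditivity of $t\mapsto t^{1/\left( \rho _{0}+1\right) }$, degree-one homogeneity of all three quantities, and the nesting $S_{\Delta ,\alpha _{2},\beta _{2}}\left( \Omega \right) \subseteq S_{\Delta ,\alpha ,\beta }\left( \Omega \right) $) simply make explicit details that the paper leaves implicit.
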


The proof is similar to the proof of the same type results from \cite{D, L,
S3, S4} and is based on the compactness of the inclusion $S_{\Delta ,\alpha
_{2},\beta _{2}}\left( \Omega \right) \subset S_{1,\alpha _{1},\beta
_{1}}\left( \Omega \right) \subset L_{p}\left( \Omega \right) $.

These lemmas allow us to get the following compactness

\begin{theorem}
\label{Theorem 3.3}Let $S_{1,\alpha _{1},\beta _{1}}\left( \Omega \right) $
, $S_{\Delta ,\alpha ,\beta }\left( \Omega \right) $ and $X$ be spaces
defined above and $S_{\Delta ,\alpha ,\beta }\left( \Omega \right) \subset
S_{1,\alpha _{1},\beta _{1}}\left( \Omega \right) $ is compact. Let $\alpha
_{1}\geq 0$, $\beta ,\beta _{1},p,p_{0},p_{1}\geq 1$ be such numbers that $%
\alpha +\beta =p=p_{1}$, $\alpha \beta _{1}\geq \alpha _{1}\beta $, $\beta >%
\frac{n}{n-1}$, $\alpha >\beta -1$. Then, the inclusion $P_{1,p_{0},p_{1}}%
\left( 0,T;S_{\Delta ,\alpha ,\beta }\left( \Omega \right) ;X\right) \subset
L_{p}\left( 0,T;S_{1,\alpha _{1},\beta _{1}}\left( \Omega \right) \right) $
is compact.
\end{theorem}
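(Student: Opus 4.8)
The plan is to read Theorem~\ref{Theorem 3.3} as an Aubin--Lions--Simon type statement and to \emph{lift} the stationary compact embedding $\overset{0}{S}_{\Delta ,\alpha ,\beta }(\Omega )\subset S_{1,\alpha _{1},\beta _{1}}(\Omega )$ (assumed here, and furnished by Theorem~\ref{Theorem 3.2}) to the associated vector spaces. Let $\{u_{m}\}$ be an arbitrary bounded sequence in $P_{1,p_{0},p_{1}}(0,T;S_{\Delta ,\alpha ,\beta }(\Omega );X)$, so that $\{u_{m}\}$ is bounded in $L_{p_{1}}(0,T;S_{\Delta ,\alpha ,\beta }(\Omega ))$ and $\{\frac{du_{m}}{dt}\}$ is bounded in $L_{p_{0}}(0,T;X)$. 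Since $L_{p}(0,T;S_{1,\alpha _{1},\beta _{1}}(\Omega ))$ is a complete metric space (Remark~\ref{Remark 3.3}), it suffices to extract a subsequence that is Cauchy there; this is what I would aim to do in two stages.

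First I would establish relative compactness of $\{u_{m}\}$ in $L_{p}(Q_{T})\equiv L_{p}(0,T;L_{p}(\Omega ))$. By Remark~\ref{Remark 3.2} the inclusion $\overset{0}{S}_{\Delta ,\alpha ,\beta }(\Omega )\subset L_{p}(\Omega )$ is compact, and since $L_{p_{0}}(\Omega )\subseteq X$ with $\Omega $ bounded one has $L_{p}(\Omega )\hookrightarrow X$; thus we dispose of the frame $\overset{0}{S}_{\Delta ,\alpha ,\beta }(\Omega )\hookrightarrow \hookrightarrow L_{p}(\Omega )\hookrightarrow X$. The derivative bound gives equicontinuity of time translations, because
\begin{equation*}
\|u_{m}(t+h)-u_{m}(t)\|_{X}\leq \int_{t}^{t+h}\Big\|\frac{du_{m}}{d\tau }\Big\|_{X}\,d\tau \leq h^{1/p_{0}^{\prime }}\Big(\int_{t}^{t+h}\Big\|\frac{du_{m}}{d\tau }\Big\|_{X}^{p_{0}}\,d\tau \Big)^{1/p_{0}},
\end{equation*}
so that $\|u_{m}(\cdot +h)-u_{m}(\cdot)\|_{L_{p_{0}}(0,T-h;X)}\to 0$ as $h\to 0$ uniformly in $m$. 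Applying Simon's compactness criterion to this frame (in the metric-space form used in \cite{S3, S4, S5}) yields a subsequence, still denoted $\{u_{m}\}$, converging strongly, hence Cauchy, in $L_{p}(Q_{T})$; note that $p=p_{1}$ makes the integrability exponents consistent.

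Next I would upgrade this to convergence in $L_{p}(0,T;S_{1,\alpha _{1},\beta _{1}}(\Omega ))$ by means of the interpolation inequality of Lemma~\ref{Lemma 3.4} (used with $(\alpha _{0},\beta _{0})=(\alpha _{1},\beta _{1})$, whose hypotheses follow from those of the theorem). For a.e.\ $t$ and all indices $m,k$ it gives
\begin{equation*}
d_{S_{1,\alpha _{1},\beta _{1}}}\!\big(u_{m}(t),u_{k}(t)\big)\leq \varepsilon \big([u_{m}(t)]_{S_{\Delta ,\alpha ,\beta }}+[u_{k}(t)]_{S_{\Delta ,\alpha ,\beta }}\big)+c(\varepsilon )\,\|u_{m}(t)-u_{k}(t)\|_{L_{p}(\Omega )}.
\end{equation*}
Raising to the appropriate power and integrating over $(0,T)$, the first term is controlled by $\varepsilon $ times the uniform bound of $\{u_{m}\}$ in $L_{p_{1}}(0,T;S_{\Delta ,\alpha ,\beta }(\Omega ))$ (here $\alpha +\beta =p=p_{1}$ is used), while the second is $c(\varepsilon )$ times $\|u_{m}-u_{k}\|_{L_{p}(Q_{T})}$, which tends to $0$ along the subsequence by the previous step. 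Choosing $\varepsilon $ small first and then $m,k$ large shows that $\{u_{m}\}$ is Cauchy in $L_{p}(0,T;S_{1,\alpha _{1},\beta _{1}}(\Omega ))$, hence convergent; this proves compactness of the inclusion.

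The main obstacle is not the abstract scheme but its realization in the nonlinear (metric) setting. The spaces $S_{\Delta ,\alpha ,\beta }$ and $S_{1,\alpha _{1},\beta _{1}}$ carry only the metrics of Remark~\ref{Remark 3.3}, so Simon's criterion must be invoked in its metric-space version, and the \emph{linear} translation estimate above is available only because the time derivative acts on $u_{m}$ in the genuine Banach space $X$. The delicate bookkeeping lies in the exponents: the metric of $L_{p}(0,T;S_{1,\alpha _{1},\beta _{1}})$ carries the power $(\rho _{1}+1)^{-1}$ coming from $\eta _{1}(u)\equiv |u|^{\rho _{1}}u$, so after raising Lemma~\ref{Lemma 3.4} to that power and integrating one must verify that the equalities $\alpha +\beta =p=p_{1}$ together with $\alpha \beta _{1}\geq \alpha _{1}\beta $ keep the $\varepsilon $-term uniformly dominated by the $L_{p_{1}}(0,T;S_{\Delta ,\alpha ,\beta })$-norm. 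This exponent compatibility is precisely what the numerical hypotheses of the theorem secure, and is the point at which the argument would break down were the parameters unbalanced.
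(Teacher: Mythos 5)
Your proposal is correct and is essentially the argument the paper intends: the paper itself omits the proof, stating only that it follows from Lemma~\ref{Lemma 3.3} and Lemma~\ref{Lemma 3.4} "similarly to \cite{L, S2, S3, S4, S5}", and your two-stage scheme --- time-translation equicontinuity in $X$ from the $W_{p_{0}}^{1}\left( 0,T;X\right) $ bound combined with the stationary compact embedding to get strong convergence in $L_{p}\left( Q_{T}\right) $, then the Ehrling-type interpolation of Lemma~\ref{Lemma 3.4} together with $\alpha +\beta =p=p_{1}$ to upgrade to convergence in $L_{p}\left( 0,T;S_{1,\alpha _{1},\beta _{1}}\left( \Omega \right) \right) $ --- is exactly that Lions--Aubin scheme adapted to pn-spaces. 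The only adjustments worth making are cosmetic: in the first stage the natural citation is Dubinski\u{\i}'s compactness theorem for semi-normed sets (\cite{D}, which the paper cites alongside \cite{L} and which handles the fact that $S_{\Delta ,\alpha ,\beta }\left( \Omega \right) $ is a cone rather than a Banach space) instead of a "metric-space Simon criterion", and the continuity $L_{p}\left( \Omega \right) \hookrightarrow X$ that your frame requires should be stated as a standing hypothesis, since the theorem's wording (like the paper's definition of $X$) leaves it implicit.
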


The proof is similar to the proof of the same type of results from \cite{L,
S2, S3, S4, S5}. Therefore, we are not providing it here. The other
compactness theorems similar to Theorem \ref{Theorem 3.2} and Lemma \ref%
{Lemma 3.3} can also be proved , but we are not presenting them here, as
well. However, if it would be neccessary, we are going to use those theorems
for the spaces $P_{1,p_{0},p_{1}}\left( 0,T;\overset{0}{S}_{\Delta ,\alpha
,\beta }\left( \Omega \right) ;X\right) $, $L_{p}\left( 0,T;\overset{0}{S}%
_{1,\alpha _{1},\beta _{1}}\left( \Omega \right) \right) $ under the
corresponding conditions on parameters $\alpha ,\alpha _{1}$, $\beta ,\beta
_{1}$, $p,p_{0},p_{1}$ and refer reader to our earlier works \cite{S2, S3,
S4, S5} for further details.

\section{\textbf{The Proof of the Solvability Theorem}}

Now we can lead the proof by using Theorem \ref{Theorem 2.2} (Corollary \ref%
{Corollary 2.1}), and in order to apply it we introduce the following spaces
and mappings:

\begin{equation*}
\mathcal{M}_{0}\equiv \overset{0}{S}_{\Delta ,\rho ,2}\left( \Omega \right)
,\quad X_{0}\equiv W_{p}^{2}\left( \Omega \right) \cap \overset{0}{W}\text{\/%
}_{p}^{1}\left( \Omega \right) ,\qquad Y\equiv L_{q}\left( \Omega \right) ,\
X\equiv L_{p}\left( \Omega \right) ,
\end{equation*}%
\begin{equation*}
f\left( u\right) \equiv -\left\vert u\right\vert ^{\rho }\Delta
u+b_{0}\left\vert u\right\vert ^{\mu +1},\quad L\equiv -\Delta ,\quad
L_{0}\equiv \nabla ,\qquad Y^{\ast }\equiv L_{p}\left( Q\right) ,\ p=\rho +2
\end{equation*}%
\begin{equation*}
\underset{0}{\mathbf{P}}\/_{1,p,q}\left( 0,T;\mathcal{M}_{0},Y\right) \equiv 
\underset{0}{\mathbf{P}}\/_{1,p,q}\left( 0,T;\overset{0}{S}_{\Delta ,\rho
,2}\left( \Omega \right) ;L_{q}\left( \Omega \right) \right) \cap L^{\infty
}\left( 0,T;\overset{0}{W}\text{\/}_{2}^{1}\left( \Omega \right) \right)
\end{equation*}%
where

\begin{equation*}
\underset{0}{\mathbf{P}}\text{\/}_{1,p,q}\left( 0,T;\overset{0}{S}_{\Delta
,\rho ,2}\left( \Omega \right) ;L_{q}\left( \Omega \right) \right) \equiv
L_{p}\left( 0,T;\overset{0}{S}_{\Delta ,\rho ,2}\left( \Omega \right)
\right) \cap \underset{0}{W}\text{\/}_{q}^{1}\left( 0,T;L_{q}\left( \Omega
\right) \right) .
\end{equation*}

It not is difficult to see that 
\begin{equation*}
\left\langle f\left( u\right) ,Lu\right\rangle \equiv \left\langle
-\left\vert u\right\vert ^{\rho }\Delta u+b_{0}\left\vert u\right\vert ^{\mu
+1},-\Delta u\right\rangle =\underset{\Omega }{\int }~\left\vert
u\right\vert ^{\rho }\left( \Delta u\right) ^{2}dx+\underset{\Omega }{\int }%
~b_{0}\left\vert u\right\vert ^{\mu +1}\Delta udx
\end{equation*}
for any $u\in W_{p}^{2}\left( \Omega \right) \cap \overset{0}{W}$\/$%
_{p}^{1}\left( \Omega \right) $ and $u\in L_{p}\left( 0,T;W_{p}^{2}\left(
\Omega \right) \cap \overset{0}{W}\text{\/}_{p}^{1}\left( \Omega \right)
\right) $.

Taking into account the embedding theorems from Section 3, the last equality
implies that, if $\min \left\{ 0,\frac{\rho }{2}-1\right\} \leq \mu <\rho
\leq 2$ or\ $\frac{\rho }{2}-1\leq \mu <\rho $ and $b_{0}\in R^{1}$ then 
\begin{equation*}
\underset{0}{\overset{T}{\int }}\left\langle f\left( u\right)
,Lu\right\rangle dt\geq \left( 1-\varepsilon \right) \underset{0}{\overset{T}%
{\int }}\ \underset{\Omega }{\int }~\left\vert u\right\vert ^{\rho }\left(
\Delta u\right) ^{2}dxdt-c_{1}\left( \varepsilon \right) =
\end{equation*}%
\begin{equation*}
\left( 1-\varepsilon \right) \underset{0}{\overset{T}{\int }}\left[ u\right]
_{\overset{0}{S}_{\Delta ,\rho ,2}\left( \Omega \right) }^{\rho
+2}dt-c_{1}\left( \varepsilon \right) \equiv \varphi \left( \left[ u\right]
_{L_{\rho +2}\left( \overset{0}{S}_{\Delta ,\rho ,2}\right) }\right) \ \left[
u\right] _{L_{\rho +2}\left( \overset{0}{S}_{\Delta ,\rho ,2}\right) },
\end{equation*}%
where $c_{0}>0,\ c_{1},\varepsilon \geq 0$, and $\varepsilon $ is a
sufficiently small nonnegative number.

Furthermore, it is obvious that $\underset{0}{\overset{t}{\int }}%
\left\langle \frac{\partial u}{\partial \tau },Lu\right\rangle d\tau \equiv 
\frac{1}{2}\left\Vert \left\vert \nabla u\left( t\right) \right\vert
\right\Vert _{L_{2}}^{2}$ for any $u\in \underset{0}{W}~_{p}^{1}\left(
0,T;X_{0}\right) $ and almost any $t\in \left( 0,T\right] $. Moreover, $%
\underset{0}{\overset{T}{\int }}\left\langle w,Lw\right\rangle d\tau \equiv
\left\Vert \left\vert \nabla w\right\vert \right\Vert _{L_{2}\left( Q\right)
}^{2}$ for any $w\in L_{p}\left( 0,T;X_{0}\right) $, where $w\equiv \frac{%
\partial u}{\partial \tau }\in L_{p}\left( 0,T;X_{0}\right) $.

Using the generalized coercivity of pair $f$ and $-\Delta $ on $L_{p}\left(
0,T;W_{p}^{2}\left( \Omega \right) \cap \overset{0}{W}\text{\/}%
_{p}^{1}\left( \Omega \right) \right) \cap W_{q}^{1}\left( 0,T;L_{q}\left(
\Omega \right) \right) $ the following apriori estimations for a solution $%
u\left( t,x\right) $ of considered problem are obtained in a common way: 
\begin{equation*}
\left[ u\right] _{L_{p}\left( \overset{0}{S}_{\Delta ,\rho ,2}\right) }\leq
c\left( \left\vert b_{0}\right\vert ,\left\Vert h\right\Vert _{L_{2}\left(
W_{2}^{1}\left( \Omega \right) \right) },\rho ,\mu \right) ,\quad p=\rho
+2,\ q=p^{\prime }
\end{equation*}%
and 
\begin{equation*}
\left\Vert u\right\Vert _{W_{q}^{1}\left( L_{q}\right) \cap L^{\infty
}\left( W_{2}^{1}\right) }\leq c\left( \left\vert b_{0}\right\vert
,\left\Vert h\right\Vert _{L_{2}\left( W_{2}^{1}\left( \Omega \right)
\right) },\rho ,\mu \right) .
\end{equation*}

Thus, each possible solution $u\left( t,x\right) $ of the considered problem
belongs to a bounded subset of 
\begin{equation*}
L_{p}\left( 0,T;\overset{0}{S}_{\Delta ,\rho ,2}\left( \Omega \right)
\right) \cap W_{q}^{1}\left( 0,T;L_{q}\left( \Omega \right) \right) \cap
L^{\infty }\left( 0,T;\overset{0}{W}\text{\/}_{2}^{1}\left( \Omega \right)
\right) ,
\end{equation*}%
and, consequently, the solutions belong to a bounded subset of $%
P_{1,p,q}\left( 0,T;\overset{0}{S}_{\Delta ,\rho ,2}\left( \Omega \right)
;L_{q}\left( \Omega \right) \right) $ and $L^{\infty }\left( 0,T;\overset{0}{%
W}\text{\/}_{2}^{1}\left( \Omega \right) \right) $.

To apply Theorem 2 (Corollary 1) it remains to show that $f$ is a weakly
compact (continuous) mapping from $P_{1}\left( Q_{T}\right) \equiv
P_{1,p,q}\left( 0,T;\overset{0}{S}_{\Delta ,\rho ,2}\left( \Omega \right)
;L_{q}\left( \Omega \right) \right) \cap L^{\infty }\left( 0,T;\overset{0}{W}%
\text{\/}_{2}^{1}\left( \Omega \right) \right) $ into $L_{q}\left(
Q_{T}\right) $. To this end, it is enough to use the following expressions: 
\begin{equation}
\left\vert u\right\vert ^{\rho -2}~u~\left\vert \nabla u\right\vert
^{2}=\left( \left\vert u\right\vert ^{\gamma \rho }~\nabla u\right) \cdot
\left( \left\vert u\right\vert ^{\left( 1-\gamma \right) \rho -2}~u~\nabla
u\right) ,  \label{Eqn 4.1}
\end{equation}%
\begin{equation}
\left\vert u\right\vert ^{\rho -2}~u~\left\vert \nabla u\right\vert ^{2}=%
\frac{1}{\rho \left( \rho +1\right) \left( 1-\theta \right) }~\Delta \left(
\left\vert u\right\vert ^{\rho }u\right) -\frac{1}{\rho \left( \theta \rho
+1\right) \left( 1-\theta \right) }\left\vert u\right\vert ^{\left( 1-\theta
\right) \rho }\ \Delta \left( \left\vert u\right\vert ^{\theta \rho
}u\right) ,  \label{Eqn 4.2}
\end{equation}%
because of 
\begin{equation*}
\left\vert u\right\vert ^{\rho }~\Delta u=\frac{1}{\rho +1}~\Delta \left(
\left\vert u\right\vert ^{\rho }u\right) -\rho \left\vert u\right\vert
^{\rho -2}~u~\left\vert \nabla u\right\vert ^{2},
\end{equation*}%
where $\gamma $ is a number from condition 4) if $\rho \geq 1,$ and $\theta $
is such a number that $\frac{1}{2}\leq \theta <1$ if $0<\rho \leq 2$.
Particularly, if $\theta =\frac{2}{3}$ it is sufficient to use the
expression: 
\begin{equation*}
\left\vert u\right\vert ^{\rho -2}~u~\left\vert \nabla u\right\vert ^{2}=%
\frac{3}{\rho \left( \rho +1\right) }~\Delta \left( \left\vert u\right\vert
^{\rho }u\right) -\frac{3}{\rho }\left\vert u\right\vert ^{\frac{\rho }{3}}\
\nabla \cdot \left( \left\vert u\right\vert ^{\frac{2\rho }{3}}\nabla
u\right)
\end{equation*}%
then $\left\vert u\right\vert ^{\frac{2\rho }{3}}\Delta u\in L_{\frac{%
2\left( \rho +2\right) }{\rho +4}}\left( Q\right) $ and $\left\vert
u\right\vert ^{\frac{\rho }{3}}\in L_{\frac{2\left( \rho +2\right) }{\rho }%
}\left( Q\right) $.

Thus, according to the embedding theorems mentioned above, the solution $%
u\left( t,x\right) \in P_{1,p_{0},q_{0}}\left( 0,T;\overset{0}{S}_{\Delta
,\alpha ,\beta }\left( \Omega \right) ;L_{q}\left( \Omega \right) \right) $
and $u\left( t,x\right) \in L_{p}\left( 0,T;\overset{0}{S}_{1,\alpha
_{1},\beta _{1}}\left( \Omega \right) \right) $ , if the parameters $\alpha
_{1}\geq 0$, $\beta ,\beta _{1},p,p_{0},p_{1}\geq 1$, $\alpha >\beta -1$
satisfy one of the following conditions: 1) $\alpha _{1}=\left( \rho
-1\right) q$, $\beta _{1}=2q$, $p=\rho +2$, $q=p^{\prime }=\frac{\rho +2}{%
\rho +1};$ 2) $\alpha =\rho $, $\beta =2,$ $p_{0}=\rho +2,q_{0}=q$; 3) $%
\alpha =s\beta $, $\beta >1,$ $p_{0}=\left( s+1\right) \beta $, $q_{0}=\beta 
$, $1\leq s\leq \frac{3\rho -2}{4}$; or 4) $\alpha =\gamma \rho \beta $, $%
\beta =\frac{\rho +2}{\gamma \rho +1}$, $\frac{1}{2}\leq \gamma <1$, $%
p_{0}=\rho +2$, $q_{0}=q$.

Therefore, if a sequence $\left\{ u_{m}\right\} \subset P_{1,p,q}\left( 0,T;%
\overset{0}{S}_{\Delta ,\rho ,2}\left( \Omega \right) ;L_{q}\left( \Omega
\right) \right) $ converges weakly to $u\in P_{1,p,q}\left( 0,T;\overset{0}{S%
}_{\Delta ,\rho ,2}\left( \Omega \right) ;L_{q}\left( \Omega \right) \right) 
$ in $P_{1,p,q}\left( 0,T;\overset{0}{S}_{\Delta ,\rho ,2}\left( \Omega
\right) ;L_{q}\left( \Omega \right) \right) $ then, according to the
compactness theorem from Section 3, one of the factors in (\ref{Eqn 4.1})
and in the second term of (\ref{Eqn 4.2}) converges weakly and the another
one converges strongly in the corresponding spaces. This implies that $%
f\left( u_{m}\right) \rightharpoonup f\left( u\right) $ in \ $L_{q}\left(
Q\right) $

Hence, all conditions of Corollary \ref{Corollary 2.1} are fulfilled.\
Applying it to the considered problem (\ref{Eqn 1.1})-(\ref{Eqn 1.3}) we
obtain the statement of Theorem \ref{Theorem 2.1}.

\begin{remark}
\label{Remark 4.1}The solvability theorem such as Theorem \ref{Theorem 2.1}
for the problem (\ref{Eqn 1.1})-(\ref{Eqn 1.3}), but with $u\left(
0,x\right) =u_{0}\left( x\right) $ for $u_{0}\in \overset{0}{S}_{\Delta
,\rho ,2}\left( \Omega \right) $ is also valid and can be proved as in \cite%
{S4} (or \cite{S3}).
\end{remark}

\begin{remark}
\label{Remark 4.2}The problem (\ref{Eqn 1.1})-(\ref{Eqn 1.3}) can also be
considered with the term $b(t,x,u)$ instead of $b_{0}\left\vert u\right\vert
^{\mu +1}$. In this case, it is enough to assume holding of the following
conditions: The function $b\left( t,x,u\right) $ is the Caratheodory
function on $Q\times R^{1}$, there exist functions $b_{0}\left( t,x\right)
,b_{1}\left( t,x\right) \geq 0$ and number $\mu \geq 0$ such that $\min
\left\{ 0,\frac{\rho }{2}-1\right\} \leq \mu <\rho $ and 
\begin{equation*}
\left\vert b\left( t,x,u\right) \right\vert \leq b_{0}\left( t,x\right)
\left\vert u\right\vert ^{\mu +1}+b_{1}\left( t,x\right) ,
\end{equation*}%
where 
\begin{equation*}
b_{0}\in L^{\infty }\left( Q\right) ,\ b_{1}\in L_{2}\left( 0,T;\overset{0}{W%
}\text{\/}_{2}^{1}\left( \Omega \right) \right) \qquad \text{if }\mu \geq 
\frac{\rho }{2}-1;
\end{equation*}

\ $b\in L^{\infty }\left( 0,T;W^{1,\infty }\left( \Omega \right)
;C^{1}\left( R^{1}\right) \right) $ and 
\begin{eqnarray*}
\left\vert D_{i}b\left( t,x,u\right) \right\vert &\leq &\widetilde{b}%
_{0}\left( t,x\right) \left\vert u\right\vert ^{\mu +1}+b_{1}\left(
t,x\right) ,\ i=\overline{1,n}, \\
\left\vert b_{\xi }\left( t,x,\xi \right) \right\vert &\leq &b_{2}\left(
t,x\right) \left\vert \xi \right\vert ^{\mu }+b_{3}\left( t,x\right) ,
\end{eqnarray*}%
\begin{equation*}
\widetilde{b}_{0},b_{2}\in L^{\infty }\left( Q\right) ,\ \widetilde{b}%
_{1}\in L_{2}\left( Q\right) ,\ b_{3}\in L_{2}\left( 0,T;\overset{0}{W}\text{%
\/}_{2}^{1}\left( \Omega \right) \right) ,
\end{equation*}
$q=p^{\prime }=\frac{p}{p-1}$, and $\ p=\rho +2$ if $\ \mu <\frac{\rho }{2}%
-1 $.
\end{remark}

\section{On a Behavior of the Solutions of Problem (\protect\ref{Eqn 1.1})-(%
\protect\ref{Eqn 1.3})}

In this section we investigate the behavior of solutions for different $\mu
\geq 0$: $\min \left\{ 0,\frac{\rho }{2}-1\right\} \leq \mu <\rho $ and $%
u\left( 0,x\right) =u_{0}\left( x\right) $, and in the case $\mu =\rho $.

\begin{theorem}
\label{Theorem 5.1}Let $\min \left\{ 0,\frac{\rho }{2}-1\right\} \leq \mu
<\rho ,$ $u_{0}\in \overset{0}{W}$\/$_{p}^{1}\left( \Omega \right) ,$ $h\in
L^{\infty }\left( R_{+}^{1};L_{q}\left( \Omega \right) \right) $ and $%
\left\Vert h\right\Vert _{L_{q}}\left( t\right) \leq C_{0}$. Then, the
solution of the problem (\ref{Eqn 1.1})-(\ref{Eqn 1.3}) with the initial
condition $u\left( 0,x\right) =u_{0}\left( x\right) $ satisfies the
inequality 
\begin{equation}
\left\Vert u\left( t\right) \right\Vert _{L_{2}\left( \Omega \right)
}^{2}\leq \left( \frac{C+C_{2}\left\Vert h\right\Vert _{L^{\infty }\left(
L_{q}\right) }^{q}}{C_{1}}\right) ^{\frac{2}{p}}+\left( C_{1}\frac{\rho }{2}%
t\right) ^{-\frac{2}{\rho }},  \label{Eqn 5.1}
\end{equation}%
i.e. the solution of the problem (\ref{Eqn 1.1})-(\ref{Eqn 1.3}) remains
bounded as $t\nearrow \infty $, where $C_{j}=C_{j}\left( \rho ,\mu
,b_{0},C_{0},\left\Vert u_{0}\right\Vert _{\overset{0}{W}\/_{p}^{1}},mes~%
\text{\/}\Omega \right) $.
\end{theorem}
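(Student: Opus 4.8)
The plan is to derive a scalar differential inequality for $y(t) \equiv \|u(t)\|_{L_2(\Omega)}^2$ and then close it by a comparison argument. First I would test the equation \eqref{Eqn 1.1} with $u$ itself. The regularity of the solution, namely $u \in L_p(0,T;\overset{0}{S}_{\Delta,\rho,2}(\Omega)) \cap W_q^1(0,T;L_q(\Omega)) \cap L^\infty(0,T;\overset{0}{W}_{2}^{1}(\Omega))$, makes the pairing $\langle u_t,u\rangle$ legitimate for a.e.\ $t$ (since $q'=p$, so $u_t\in L_q$ pairs with $u\in L_p$) and gives $\langle u_t,u\rangle=\tfrac12\tfrac{d}{dt}\|u\|_{L_2}^2$. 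Integrating the principal term by parts over $\Omega$ and using $u|_{\partial\Omega}=0$ yields $-\int_\Omega |u|^\rho u\,\Delta u\,dx=(\rho+1)\int_\Omega |u|^\rho|\nabla u|^2\,dx\ge 0$, so the energy identity reads
\begin{equation*}
\tfrac12\tfrac{d}{dt}\|u\|_{L_2}^2+(\rho+1)\int_\Omega |u|^\rho|\nabla u|^2\,dx=-b_0\int_\Omega |u|^{\mu+1}u\,dx+\int_\Omega h\,u\,dx.
\end{equation*}

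Next I would bound the dissipation from below by a power of $y$. Writing $v\equiv\eta(u)=|u|^{\rho/2}u$, one has $\int_\Omega |u|^\rho|\nabla u|^2\,dx=\tfrac{4}{(\rho+2)^2}\|\nabla v\|_{L_2}^2$, and the Poincaré inequality together with $\|v\|_{L_2}^2=\int_\Omega|u|^{\rho+2}\,dx=\|u\|_{L_p}^p$ (where $p=\rho+2$) gives $(\rho+1)\int_\Omega|u|^\rho|\nabla u|^2\,dx\ge c_0\|u\|_{L_p}^p$; since $p\ge 2$ on the bounded domain $\Omega$, Hölder's inequality then yields $\|u\|_{L_p}^p\ge c\,\|u\|_{L_2}^p=c\,y^{p/2}$. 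The two right-hand terms are subcritical and absorbable: because $\mu<\rho$ we have $\mu+2<p$, so $|b_0|\int_\Omega|u|^{\mu+1}u\,dx\le |b_0|\,\|u\|_{L_{\mu+2}}^{\mu+2}\le\varepsilon\|u\|_{L_p}^p+c(\varepsilon)$ by Young's inequality, and since $q'=p$ we get $\int_\Omega h\,u\,dx\le\|h\|_{L_q}\|u\|_{L_p}\le\varepsilon\|u\|_{L_p}^p+c(\varepsilon)\|h\|_{L_q}^q$. Absorbing the $\varepsilon$-terms into the dissipation and setting $K\equiv C+C_2\|h\|_{L^\infty(L_q)}^q$ produces the Bernoulli-type inequality
\begin{equation*}
\tfrac{dy}{dt}+C_1\,y^{p/2}\le K,\qquad y(t)\equiv\|u(t)\|_{L_2}^2.
\end{equation*}

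Finally I would close the estimate by a comparison argument, which I expect to be the main point. Set $\sigma\equiv p/2-1=\rho/2$, let $y_*\equiv(K/C_1)^{1/(1+\sigma)}=(K/C_1)^{2/p}$ be the equilibrium, and let $w(t)\equiv(C_1\sigma t)^{-1/\sigma}=(C_1\tfrac{\rho}{2}t)^{-2/\rho}$ be the solution of $w'=-C_1 w^{1+\sigma}$ with $w(0^+)=+\infty$. The function $\bar y\equiv y_*+w$ is a supersolution: indeed $\bar y'=-C_1 w^{1+\sigma}$, and the superadditivity $(y_*+w)^{1+\sigma}\ge y_*^{1+\sigma}+w^{1+\sigma}$ (valid for the exponent $1+\sigma\ge 1$ and nonnegative arguments), combined with $C_1 y_*^{1+\sigma}=K$, gives $\bar y'\ge K-C_1\bar y^{1+\sigma}$. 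Since $\bar y(0^+)=+\infty\ge y(0)$, the scalar comparison principle yields $y(t)\le\bar y(t)$ for all $t>0$, which is precisely \eqref{Eqn 5.1}; note that this bound is in fact independent of $u_0$, hence consistent with (and stronger than) the stated dependence. The delicate points to verify are the justification of the energy identity for the weak solution (handled by a density/approximation argument in $X_0$), the control of the sign-indefinite lower-order term (which works only through the hypothesis $\mu<\rho$), and the rigor of the comparison despite the blow-up of $\bar y$ at $t=0$, which is obtained by comparing on $[t_0,\infty)$ and letting $t_0\searrow 0$.
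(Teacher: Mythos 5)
Your proposal is correct and follows essentially the same route as the paper: test equation (\ref{Eqn 1.1}) with $u$, integrate by parts, control the dissipation from below via $v=|u|^{\rho/2}u$ and Poincar\'e/H\"older, absorb the $b_0$- and $h$-terms with Young's inequality (using $\mu<\rho$ and $q=p'$), and arrive at the differential inequality $y'+C_1y^{p/2}\leq C+C_2\|h\|_{L^\infty(L_q)}^q$, exactly as in the paper's derivation of (\ref{Eqn 5.2}). The only difference is at the final step: the paper invokes the Ghidaglia--Temam lemma (Lemma \ref{Lemma 5.1}) as a citation, whereas you re-prove it by the explicit supersolution $\bar y=y_*+(C_1\tfrac{\rho}{2}t)^{-2/\rho}$ and a comparison argument — which is precisely the standard proof of that lemma, so the two arguments coincide in substance (and your observation that the resulting bound is independent of $u_0$ is a valid sharpening of the paper's stated constant dependence).
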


\begin{proof}
Consider the functional 
\begin{equation*}
\Phi \left( t\right) \equiv \Phi \left( u\left( t\right) \right) \equiv 
\frac{1}{2}\underset{\Omega }{\dint }\text{\/}\left\vert u\left( t\right)
\right\vert ^{2}dx\equiv \frac{1}{2}\left\Vert u\left( t\right) \right\Vert
_{L_{2}\left( \Omega \right) }^{2}.
\end{equation*}%
If $u\left( t\right) $ is a solution of the problem (\ref{Eqn 1.1})-(\ref%
{Eqn 1.3}) then, the function $\Phi \left( t\right) $ has the property 
\begin{equation*}
\Phi ^{\prime }\left( t\right) =\left\langle u^{\prime },u\right\rangle
=\left\langle \left\vert u\right\vert ^{\rho }\Delta u-b_{0}\left\vert
u\right\vert ^{\mu +1}+h,u\right\rangle =-\left( \rho +1\right) \left\langle
\left\vert u\right\vert ^{\rho }\nabla u,\nabla u\right\rangle -
\end{equation*}%
\begin{equation*}
\left\langle b_{0}\left\vert u\right\vert ^{\mu }u,u\right\rangle
+\left\langle h,u\right\rangle \leq -\left( \rho +1\right) \left\Vert
\left\vert u\right\vert ^{\frac{\rho }{2}}\nabla u\right\Vert
_{L_{2}}^{2}+\left\vert b_{0}\right\vert \left\Vert u\right\Vert _{L_{\mu
+2}}^{\mu +2}+\left\vert \left\langle h,u\right\rangle \right\vert .
\end{equation*}%
Applying the results of Section 3 we get 
\begin{equation*}
\Phi ^{\prime }\left( t\right) \leq -\left( \rho +1\right) \left\Vert
\left\vert \left\vert u\right\vert ^{\frac{\rho }{2}}\nabla u\right\vert
\right\Vert _{L_{2}}^{2}+\left\vert b_{0}\right\vert \left\Vert u\right\Vert
_{L_{\mu +2}}^{\mu +2}+\left\Vert u\right\Vert _{L_{p}}\left\Vert
h\right\Vert _{L_{q}}\leq
\end{equation*}%
\begin{equation*}
-\left( \rho +1\right) \frac{\left( \rho +2\right) ^{2}}{4}\left\Vert
\left\vert \nabla \left( \left\vert u\right\vert ^{\frac{\rho }{2}}u\right)
\right\vert \right\Vert _{L_{2}}^{2}+2\varepsilon \left\Vert u\right\Vert
_{L_{p}}^{p}+C\left( \varepsilon \right) \left( 1+\left\Vert h\right\Vert
_{L_{q}}^{q}\right) \leq
\end{equation*}%
\begin{equation*}
-C_{0}\left\Vert \left\vert \nabla \left( \left\vert u\right\vert ^{\frac{%
\rho }{2}}u\right) \right\vert \right\Vert _{L_{2}}^{2}+C\left( \varepsilon
\right) \left( 1+\left\Vert h\right\Vert _{L_{q}}^{q}\right) \leq
\end{equation*}%
\begin{equation*}
-\widetilde{C}_{0}\left\Vert u\right\Vert _{Lp}^{p}+C\left( \varepsilon
\right) \left( 1+\left\Vert h\right\Vert _{L_{q}}^{q}\right) \leq
-C_{1}\left\Vert u\right\Vert _{L_{2}}^{p}+C\left( \varepsilon \right)
+k\left( \varepsilon \right) \left\Vert h\right\Vert _{L^{\infty }\left(
L_{q}\right) }^{q}
\end{equation*}%
because of $\overset{0}{S}_{1,\rho ,2}\left( \Omega \right) \subset
L_{p}\left( \Omega \right) \subset L_{2}\left( \Omega \right) $ with the
corresponding inequalities. Hence we have 
\begin{equation}
\Phi ^{\prime }\left( t\right) +C_{1}\left( \Phi \left( t\right) \right) ^{%
\frac{p}{2}}\leq C+C_{2}\left\Vert h\right\Vert _{L^{\infty }\left(
L_{q}\right) }^{q},  \label{Eqn 5.2}
\end{equation}%
where $C=C\left( \rho ,\mu ,b_{0},C_{0},\left\Vert u_{0}\right\Vert _{%
\overset{0}{W}\/_{p}^{1}},\ mes~\text{\/}\Omega \right) $, $\
C_{1}=C_{1}\left( \rho ,\mu ,b_{0},C_{0},\left\Vert u_{0}\right\Vert _{%
\overset{0}{W}\/_{p}^{1}},\ mes~\text{\/}\Omega \right) $ and $\Phi \left(
0\right) =\left\Vert u_{0}\right\Vert _{L_{2}}^{2}$.

Then, applying the following form of Gronwall's lemma (Lemma \ref{Lemma 5.1}%
) to the inequality (\ref{Eqn 5.2}), which was proved by Ghidaglia, with $%
y\left( t\right) \equiv \Phi \left( t\right) $, $\theta =C_{1}$, $\eta
=C+C_{2}\left\Vert h\right\Vert _{L_{q}}^{q}$, $l=\frac{p}{2}$ we obtain the
inequality (\ref{Eqn 5.1}).
\end{proof}

\begin{lemma}
\label{Lemma 5.1}(\cite{T}) Let $y\left( t\right) $ be a positive absolutely
continuous function on $R_{+}^{1}$ which satisfies 
\begin{equation*}
y^{\prime }+\theta y^{l}\leq \eta ,\quad l>1,\theta >0,\eta \geq 0.
\end{equation*}%
Then, for $t\geq 0$, 
\begin{equation*}
y\left( t\right) \leq \left( \frac{\eta }{\theta }\right) ^{\frac{1}{l}%
}+\left( \theta \left( l-1\right) t\right) ^{-\frac{1}{l-1}}.
\end{equation*}
\end{lemma}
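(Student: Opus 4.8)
The plan is to compare $y$ with an explicit supersolution assembled from the two terms appearing on the right-hand side of the claimed bound. Write $M\equiv\left(\eta/\theta\right)^{1/l}$ for the equilibrium value (the root of $\theta y^{l}=\eta$) and $w\left(t\right)\equiv\left(\theta\left(l-1\right)t\right)^{-1/\left(l-1\right)}$. A direct differentiation shows that $w$ solves the forcing-free equation $w^{\prime}=-\theta w^{l}$ exactly, and that $w\left(t\right)\to+\infty$ as $t\searrow0$. I would then set $\bar{y}\equiv M+w$ and verify that it is a supersolution of the given differential inequality.

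For the supersolution check I would invoke the elementary superadditivity $\left(a+b\right)^{l}\geq a^{l}+b^{l}$, valid for $a,b\geq0$ and $l\geq1$ (immediate from the monotonicity of $t\mapsto\left(a+t\right)^{l}-t^{l}$). Applying it with $a=M$, $b=w$ gives $\bar{y}^{\prime}+\theta\bar{y}^{l}=-\theta w^{l}+\theta\left(M+w\right)^{l}\geq\theta M^{l}=\eta$, so $\bar{y}$ satisfies the reversed inequality $\bar{y}^{\prime}+\theta\bar{y}^{l}\geq\eta$. The decisive feature is that $\bar{y}\left(t\right)\to\infty$ as $t\searrow0$ while $y\left(0\right)$ is finite, so $y<\bar{y}$ on a right neighborhood of $0$; this is precisely what allows the final estimate to be independent of $y\left(0\right)$.

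The remaining step is the comparison $y\left(t\right)\leq\bar{y}\left(t\right)$ for all $t>0$. I would argue that $y$ cannot overtake $\bar{y}$: at a hypothetical first crossing time $t_{1}$ one has $y\left(t_{1}\right)=\bar{y}\left(t_{1}\right)$ with $y<\bar{y}$ just before $t_{1}$, yet the subsolution and supersolution inequalities together force $y^{\prime}\leq\eta-\theta y^{l}=\eta-\theta\bar{y}^{l}\leq\bar{y}^{\prime}$, which is incompatible with $y$ reaching $\bar{y}$ from below. Absolute continuity of $y$ justifies working with the almost-everywhere derivative and integrating across the crossing.

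The main obstacle is making this comparison rigorous in the degenerate touching case, where $y$ and $\bar{y}$ meet with equal values and the two one-sided estimates yield only $y^{\prime}\leq\bar{y}^{\prime}$, possibly with equality rather than strict inequality. I would remove this degeneracy by replacing $\eta$ with $\eta+\varepsilon$: the shifted function $\bar{y}_{\varepsilon}\equiv\left(\left(\eta+\varepsilon\right)/\theta\right)^{1/l}+w$ is a \emph{strict} supersolution, so the first-crossing argument now produces a genuine contradiction and gives $y<\bar{y}_{\varepsilon}$ for every $t>0$. Letting $\varepsilon\searrow0$ then delivers $y\left(t\right)\leq M+w\left(t\right)$, which is exactly the asserted inequality.
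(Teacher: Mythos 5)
Your argument is correct. Note first that the paper contains no proof of Lemma \ref{Lemma 5.1} at all: the lemma is quoted from Temam's book \cite{T} (the text attributes it to Ghidaglia), so the only meaningful comparison is with the standard proof of this lemma in the cited source. That proof is a direct integration rather than a comparison-principle argument: setting $M=\left(\eta/\theta\right)^{1/l}$, one observes that $y^{\prime}\leq\eta-\theta y^{l}\leq 0$ a.e. wherever $y\geq M$, so once $y$ falls to the level $M$ it can never exceed $M$ again; and on any interval where $y>M$, the same superadditivity you invoke gives $y^{l}\geq\left(y-M\right)^{l}+M^{l}$, hence $z=y-M$ satisfies $z^{\prime}\leq-\theta z^{l}$, which integrates explicitly via $\left(z^{1-l}\right)^{\prime}\geq\theta\left(l-1\right)$ (discarding the term $z^{1-l}\left(0\right)\geq 0$) to $z\left(t\right)\leq\left(\theta\left(l-1\right)t\right)^{-1/\left(l-1\right)}$. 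Your route repackages the same two ingredients --- the inequality $\left(a+b\right)^{l}\geq a^{l}+b^{l}$ and the exact solution $w$ of $w^{\prime}=-\theta w^{l}$ --- into an explicit supersolution $M+w$ plus a first-crossing comparison. The cost of that formulation is exactly the delicacy you identified: $y$ is only a.e. differentiable and the touching may be degenerate, and your repair (pass to the strict supersolution $\bar{y}_{\varepsilon}$, integrate $\left(y-\bar{y}_{\varepsilon}\right)^{\prime}\leq-\varepsilon$ over the interval following the first crossing instead of arguing pointwise, then let $\varepsilon\searrow 0$) is precisely what is needed to make it rigorous. The benefit is conceptual: the independence of the bound from $y\left(0\right)$ is transparent, since the supersolution blows up as $t\searrow 0$ and hence dominates any finite initial datum near $t=0$, whereas the integration proof obtains the same effect by discarding $z^{1-l}\left(0\right)$. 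Both arguments are sound; yours is a complete, self-contained proof of the lemma the paper only cites.
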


Now, consider the following problem: 
\begin{equation}
\frac{\partial u}{\partial t}-\left\vert u\right\vert ^{\rho }\Delta
u-b\left( x\right) \left\vert u\right\vert ^{\rho +1}=0,\quad \left(
t,x\right) \in Q,\quad  \label{Eqn 5.3}
\end{equation}%
\begin{equation}
u\left( 0,x\right) =u_{0}\left( x\right) \geq 0,\quad u\left\vert \text{\/~}%
_{\Gamma }=0\right. ,\ \Gamma \equiv \left[ 0,T\right] \times \partial
\Omega ,  \label{Eqn 5.4}
\end{equation}

Let $\lambda _{1}$ be the first eigenvalue and $v_{1}\left( x\right) $ be
the corresponding eigenfunction of the problem 
\begin{equation*}
-\Delta v=\lambda v,\ \ x\in \Omega \quad v\left\vert \text{\/~}_{\partial
\Omega }=0\right. .
\end{equation*}

\begin{lemma}
\label{Lemma 5.2}Let $\rho ,b\left( x\right) >0$, $u_{0}\left( x\right) $ $%
\geq 0$ and $\mu =\rho $, moreover $u_{0}\in L_{2}\left( \Omega \right) $, $%
\left\Vert b\right\Vert _{L^{\infty }\left( \Omega \right) }\leq c$ , $%
c=c\left( \Omega \right) >0$, $\Omega \subset R^{n}$ be as above. Then, if $%
\ M=\frac{c\left( \rho +2\right) ^{2}}{4\left( \rho +1\right) }<\lambda _{1}$
then a solution of the problem (\ref{Eqn 5.3})-(\ref{Eqn 5.4}) remains
bounded as $t\nearrow \infty $ , i.e. the inequality of the type (\ref{Eqn
5.1}) takes place also.
\end{lemma}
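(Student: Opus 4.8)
The plan is to follow the energy method of Theorem \ref{Theorem 5.1}, working with the same functional
\begin{equation*}
\Phi \left( t\right) \equiv \frac{1}{2}\underset{\Omega }{\int }\left\vert u\left( t\right) \right\vert ^{2}dx\equiv \frac{1}{2}\left\Vert u\left( t\right) \right\Vert _{L_{2}\left( \Omega \right) }^{2},
\end{equation*}
but now tracking the source term $b\left( x\right) \left\vert u\right\vert ^{\rho +1}$ with its destabilizing sign rather than absorbing it, since here the critical exponent $\mu =\rho $ leaves no coercivity to spare. First I would differentiate $\Phi $ along a solution and substitute (\ref{Eqn 5.3}) to get $\Phi ^{\prime }\left( t\right) =\left\langle \left\vert u\right\vert ^{\rho }\Delta u,u\right\rangle +\left\langle b\left( x\right) \left\vert u\right\vert ^{\rho +1},u\right\rangle $. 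Integration by parts converts the first bracket into $-\left( \rho +1\right) \int_{\Omega }\left\vert u\right\vert ^{\rho }\left\vert \nabla u\right\vert ^{2}dx$, while nonnegativity of the solution (so that $\left\vert u\right\vert ^{\rho +1}u=\left\vert u\right\vert ^{\rho +2}$) bounds the second by $c\int_{\Omega }\left\vert u\right\vert ^{\rho +2}dx$ with $c=\left\Vert b\right\Vert _{L^{\infty }\left( \Omega \right) }$.

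Next I would introduce $w\equiv \left\vert u\right\vert ^{\frac{\rho }{2}}u$, which vanishes on $\partial \Omega $ and lies in $\overset{0}{W}_{2}^{1}\left( \Omega \right) $, and use the pointwise identity $\left\vert \nabla w\right\vert ^{2}=\frac{\left( \rho +2\right) ^{2}}{4}\left\vert u\right\vert ^{\rho }\left\vert \nabla u\right\vert ^{2}$ together with $w^{2}=\left\vert u\right\vert ^{\rho +2}$. This recasts both terms in terms of $w$ and gives
\begin{equation*}
\Phi ^{\prime }\left( t\right) \leq -\frac{4\left( \rho +1\right) }{\left( \rho +2\right) ^{2}}\left\Vert \nabla w\right\Vert _{L_{2}}^{2}+c\left\Vert w\right\Vert _{L_{2}}^{2}.
\end{equation*}
The heart of the argument is then the variational characterization of the first Dirichlet eigenvalue, $\lambda _{1}\left\Vert w\right\Vert _{L_{2}}^{2}\leq \left\Vert \nabla w\right\Vert _{L_{2}}^{2}$, applied to $w$. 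Substituting it yields
\begin{equation*}
\Phi ^{\prime }\left( t\right) \leq \Big( c-\frac{4\left( \rho +1\right) \lambda _{1}}{\left( \rho +2\right) ^{2}}\Big) \left\Vert w\right\Vert _{L_{2}}^{2}=-\frac{4\left( \rho +1\right) }{\left( \rho +2\right) ^{2}}\left( \lambda _{1}-M\right) \left\Vert w\right\Vert _{L_{2}}^{2},
\end{equation*}
whose coefficient is strictly negative precisely under the hypothesis $M<\lambda _{1}$.

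Finally, since $\rho +2>2$ and $\Omega $ is bounded, the embedding $L_{\rho +2}\left( \Omega \right) \subset L_{2}\left( \Omega \right) $ gives $\left\Vert w\right\Vert _{L_{2}}^{2}=\left\Vert u\right\Vert _{L_{\rho +2}}^{\rho +2}\geq C\left\Vert u\right\Vert _{L_{2}}^{\rho +2}=C\left( 2\Phi \right) ^{\frac{\rho +2}{2}}$, so I arrive at the differential inequality $\Phi ^{\prime }\left( t\right) +C_{1}\Phi ^{p/2}\left( t\right) \leq 0$ with $p=\rho +2$, $C_{1}>0$ and $p/2>1$. Applying Lemma \ref{Lemma 5.1} with $\eta =0$, $\theta =C_{1}$, $l=p/2$ then produces a bound of the form (\ref{Eqn 5.1}), in fact decaying like $t^{-2/\rho }$, which proves boundedness as $t\nearrow \infty $.

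The main obstacle I anticipate is not the computation but the two structural points that make $\mu =\rho $ the critical exponent. First, one must justify that the solution stays nonnegative (for instance by comparison with the trivial solution, since $u_{0}\geq 0$), so that the source term is genuinely $c\int_{\Omega }\left\vert u\right\vert ^{\rho +2}$ and matches the dissipation term at exactly the same homogeneity after the $w$-substitution. Second, this exact balance is what forces the smallness condition: unlike the subcritical case $\mu <\rho $ of Theorem \ref{Theorem 5.1}, where the source is of strictly lower order and can be dominated, here the eigenvalue gap $\lambda _{1}-M$ is the only mechanism that keeps the coefficient of $\left\Vert w\right\Vert _{L_{2}}^{2}$ negative and the energy from growing.
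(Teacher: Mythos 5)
Your proof is correct and takes essentially the same route as the paper's: the paper's proof is exactly your computation (the functional $\Phi$, integration by parts giving $-(\rho+1)\int_{\Omega}|u|^{\rho}|\nabla u|^{2}dx$, the substitution $w=|u|^{\frac{\rho}{2}}u$, and the hypothesis $M<\lambda_{1}$), only stated more tersely, with the eigenvalue inequality $\lambda_{1}\|w\|_{L_{2}}^{2}\leq\|\nabla w\|_{L_{2}}^{2}$ and the final application of Lemma \ref{Lemma 5.1} compressed into the phrase ``as in the previous case,'' which you have filled in explicitly and correctly. One minor remark: your worry about preserving nonnegativity of the solution is unnecessary, since the pointwise bound $b(x)|u|^{\rho+1}u\leq c|u|^{\rho+2}$ holds for any sign of $u$ because $|u|^{\rho+1}u\leq|u|^{\rho+2}$.
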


\begin{proof}
Let $\rho ,b\left( x\right) >0$ and $\mu =\rho $. Then, using the previous
reasoning we get%
\begin{equation*}
\Phi ^{\prime }\left( t\right) =\left\langle u^{\prime },u\right\rangle
=-\left( \rho +1\right) \left\langle \left\vert u\right\vert ^{\rho }\nabla
u,\nabla u\right\rangle +\left\langle b\left( x\right) \left\vert
u\right\vert ^{\rho +1},u\right\rangle \leq
\end{equation*}%
\begin{equation*}
-\left( \rho +1\right) \frac{4}{\left( \rho +2\right) ^{2}}\left\Vert \nabla
\left( \left\vert u\right\vert ^{\frac{\rho }{2}}u\right) \right\Vert
_{L_{2}}^{2}+\left\Vert b\right\Vert _{L^{\infty }}\left\Vert \left\vert
u\right\vert ^{\frac{\rho }{2}}u\right\Vert _{L_{2}}^{2}.
\end{equation*}%
Since $M<\lambda _{1}$ a solution remains bounded when $t\nearrow \infty $
as in the previous case.
\end{proof}

\begin{remark}
\label{Remark 5.1}Suppose $b\left( x\right) >\lambda _{1}$ and $u\left(
t,x\right) >0$ for $\ x\in \Omega $ or on a subdomain $\widetilde{\Omega }%
\subset \subset \Omega $. Note that this case was studied under various
conditions in \cite{FMcL, Wig, TI, Win}. In our consideration we study the
problem (\ref{Eqn 5.3})-(\ref{Eqn 5.4}) in the following way:

If $u\left( t,x\right) >0$ for $\ x\in \Omega $ then, the equation (\ref{Eqn
5.3}) can be represented as 
\begin{equation*}
u^{-\rho }\frac{\partial u}{\partial t}-\Delta u-b\left( x\right) u=0,\quad
\left( t,x\right) \in Q.\quad
\end{equation*}%
Hence, we have 
\begin{equation*}
\left\langle u^{-\rho }\frac{\partial u}{\partial t},v_{1}\right\rangle
=\left\langle \Delta u+b\left( x\right) u,v_{1}\right\rangle \Longrightarrow
\left\langle u^{-\rho }\frac{\partial u}{\partial t},v_{1}\right\rangle
=-\lambda _{1}\left\langle u,v_{1}\right\rangle +\left\langle b\left(
x\right) u,v_{1}\right\rangle
\end{equation*}%
or 
\begin{equation*}
\left\langle u^{-\rho }\frac{\partial u}{\partial t},v_{1}\right\rangle \geq
\delta \left\langle u,v_{1}\right\rangle ,\quad \left( b\left( x\right)
-\lambda _{1}\right) \geq \delta >0\Longrightarrow \left( 1-\rho \right)
^{-1}\frac{\partial }{\partial t}\left\langle u^{1-\rho },v_{1}\right\rangle
\geq \delta \left\langle u,v_{1}\right\rangle .
\end{equation*}%
The blow-up result can be obtained from here as in \cite{Win} (see \cite%
{FMcL, Wig, TI, Win} and references therein).
\end{remark}

\section{Appendixes}

\subsection{Appendix \textbf{A}}

Let $X,Y$ be a locally convex vector topological spaces, $B\subseteq Y$ be a
Banach space and $g:D\left( g\right) \subseteq X\longrightarrow Y$. Let's
introduce the following subset of $X$ 
\begin{equation*}
\mathcal{M}_{gB}\equiv \left\{ x\in X\left\vert ~g\left( x\right) \in
B,\right. \func{Im}g\cap B\neq \varnothing \right\} .
\end{equation*}

\begin{definition}
\label{Definition 6.1}A subset $\mathcal{M}\subseteq X$ is called a $pn-$%
space (i.e. pseudonormed space) if $S$ is a topological space and there is a
function $\left[ \cdot \right] _{\mathcal{M}}:\mathcal{M}\longrightarrow
R_{+}^{1}\equiv \left[ 0,\infty \right) $ (wh\i ch is called $p-$norm of $%
\mathcal{M}$) such that

qn) $\left[ x\right] _{\mathcal{M}}\geq 0$, $\forall x\in \mathcal{M}$ and $%
x=0\Longrightarrow \left[ x\right] _{\mathcal{M}}=0$;

pn) \ $\left[ x_{1}\right] _{\mathcal{M}}\neq \left[ x_{2}\right] _{\mathcal{%
M}}\Longrightarrow x_{1}\neq x_{2}$, for $x_{1},x_{2}\in \mathcal{M}$, and $%
\left[ x\right] _{\mathcal{M}}=0\Longrightarrow x=0$;
\end{definition}

The following conditions are often fulfilled in the spaces $\mathcal{M}_{gB}$%
.

N) There exist a convex function $\nu :R^{1}\longrightarrow \overline{%
R_{+}^{1}}$ and number $K\in \left( 0,\infty \right] $ such that $\left[
\lambda x\right] _{\mathcal{M}}\leq \nu \left( \lambda \right) \left[ x%
\right] _{\mathcal{M}}$ for any $x\in \mathcal{M}$ and $\lambda \in R^{1}$, $%
\left\vert \lambda \right\vert <K$, moreover, $\underset{\left\vert \lambda
\right\vert \longrightarrow \lambda _{j}}{\lim }\frac{\nu \left( \lambda
\right) }{\left\vert \lambda \right\vert }=c_{j}$, $j=0,1$ where $\lambda
_{0}=0$, $\lambda _{1}=K$ and $c_{0}=c_{1}=1$ or $c_{0}=0$, $c_{1}=\infty $,
i.e. if $K=\infty $ then $\lambda x\in \mathcal{M}$ for any $x\in \mathcal{M}
$ and $\lambda \in R^{1}.$

Let $g:D\left( g\right) \subseteq X\longrightarrow Y$ be such a mapping that 
$\mathcal{M}_{gB}\neq \varnothing $ and the following conditions are
fulfilled

G$_{\text{1}}$) $g:D\left( g\right) \longleftrightarrow \func{Im}g$ is a
bijection and $g\left( 0\right) =0$;

G$_{\text{2}}$) there is a function $\nu :R^{1}\longrightarrow \overline{%
R_{+}^{1}}$ satsfying condition N such that 
\begin{equation*}
\left\Vert g\left( \lambda x\right) \right\Vert _{B}\leq \nu \left( \lambda
\right) \left\Vert g\left( x\right) \right\Vert _{B},\ \forall x\in \mathcal{%
M}_{gB},\ \forall \lambda \in R^{1};
\end{equation*}%
If the mapping $g$ satisfies conditions G$_{1}$ and G$_{2}$ then $\mathcal{M}%
_{gB}$ is a $pn-$space with $p-$norm defined in the following way: there is
a one-to-one function $\psi :R_{+}^{1}\longrightarrow R_{+}^{1}$, $\psi
\left( 0\right) =0$, $\psi ,\psi ^{-1}\in C^{0}$ such that $\left[ x\right]
_{\mathcal{M}_{gB}}\equiv \psi ^{-1}\left( \left\Vert g\left( x\right)
\right\Vert _{B}\right) $. In this case $\mathcal{M}_{gB}$ is a metric space
with a metric: $d_{\mathcal{M}}\left( x_{1};x_{2}\right) \equiv \left\Vert
g\left( x_{1}\right) -g\left( x_{2}\right) \right\Vert _{B}$. Further, we
consider just such type of $pn-$spaces.

\begin{definition}
\label{Definition 6.2}The $pn-$space $\mathcal{M}_{gB}$ is called weakly
complete if $g\left( \mathcal{M}_{gB}\right) $ is weakly closed in $B.$ The
pn-space $\mathcal{M}_{gB}$ is "reflexive" if each bounded weakly closed
subset of $\mathcal{M}_{gB}$ is weakly compact in $\mathcal{M}_{gB}$.
\end{definition}

It is clear that if $B$ is a reflexive Banach space and $\mathcal{M}_{gB}$
is a weakly complete $pn-$space, then $\mathcal{M}_{gB}$ is "reflexive".
Moreover, if $B$ is a separable Banach space, then $\mathcal{M}_{gB}$ is
separable, also.

\subsection{Appendix \textbf{B}}

In the beginning we consider an operator equation 
\begin{equation}
f\left( x\right) =y,\quad y\in Y,  \label{Eqn 6.1}
\end{equation}%
where $f:D\left( f\right) \subseteq X\longrightarrow Y$ is a nonlinear
bounded operator, and prove a general solvability theorem for it. It is
clear that (\ref{Eqn 6.1})$\mathit{\ }$is equivalent to the following
functional equation:%
\begin{equation}
\left\langle f\left( x\right) ,y^{\ast }\right\rangle =\left\langle
y,y^{\ast }\right\rangle ,\quad \forall y^{\ast }\in Y^{\ast }.
\label{Eqn 6.2}
\end{equation}

We consider the following conditions:

1) $f:\mathcal{M}_{0}\subseteq D\left( f\right) \longrightarrow Y$ is a
weakly compact (weakly "continuous") mapping, i.e. for any weakly
convergence sequence $\left\{ x_{m}\right\} _{m=1}^{\infty }\subset \mathcal{%
M}_{0}$ in $\mathcal{M}_{0}$ (i.e. $x_{m}\overset{\mathcal{M}_{0}}{%
\rightharpoonup }x_{0}\in \mathcal{M}_{0}$) there is a subsequence $\left\{
x_{m_{k}}\right\} _{k=1}^{\infty }\subseteq \left\{ x_{m}\right\}
_{m=1}^{\infty }$ such that $f\left( x_{m_{k}}\right) \overset{Y}{%
\rightharpoonup }f\left( x_{0}\right) $ weakly in $Y$ (or for a general
sequence if $\mathcal{M}_{0}$ not is separable space) and $\mathcal{M}_{0}$
be a weakly complete $pn-$space;

2) there exists a mapping $g:X_{0}\subseteq X\longrightarrow Y^{\ast }$ and
a continuous function $\varphi :R_{+}^{1}\longrightarrow R^{1}$
nondecreasing for $\tau \geq \tau _{0}\geq 0$ and $\varphi \left( \tau
_{1}\right) >0$ for a number $\tau _{1}>0$ such that it generates a
"coercive" pair in a generalized sense with $f$ on the topological space $%
X_{1}\subseteq X_{0}\cap \mathcal{M}_{0}$, i.e. 
\begin{equation*}
\left\langle f\left( x\right) ,g\left( x\right) \right\rangle \geq \varphi
\left( \lbrack x]_{\mathcal{M}_{0}}\right) [x]_{\mathcal{M}_{0}},\quad
\forall x\in X_{1},
\end{equation*}%
where $X_{1}$ is such a topological space that $\overline{X_{1}}%
^{X_{0}}\equiv X_{0}$ and $\overline{X_{1}}^{\mathcal{M}_{0}}\equiv \mathcal{%
M}_{0}$, and\textit{\ }$\left\langle \cdot ,\cdot \right\rangle $ is a%
\textit{\ }dual form of the pair $\left( Y,Y^{\ast }\right) $, moreover, one
of the following conditions $\left( \alpha \right) $ or $\left( \beta
\right) $ holds:

$\left( \alpha \right) $ if $g\equiv L$ is a linear continuous operator,
then $X_{1}$ is a \textquotedblright reflexive\textquotedblright\ space (see
[S3, S4]), $X_{0}\equiv X_{1}\subseteq \mathcal{M}_{0}$ is a separable
topological vector space which is dense in $\mathcal{M}_{0}$ and $\ker
L^{\ast }=\left\{ 0\right\} $.

$\left( \beta \right) $ if $g$ is a bounded operator (linear or nonlinear),
then $Y$ is a reflexive separable space, $g\left( X_{1}\right) $ contains an
everywhere dense linear manifold of $Y^{\ast }$ and $g^{-1}$ is weakly
compact (weakly continuous) operator from $Y^{\ast }$ to $\mathcal{M}_{0}$.

\begin{theorem}
\label{Theorem 6.1}\textit{Let conditions 1 and 2 hold. Then the equation (%
\ref{Eqn 6.1}) (or (\ref{Eqn 6.2}))\ is solvable in }$\mathcal{M}_{0}$%
\textit{\ for any} $y\in Y$ \textit{satisfying the following inequality:
there exists} $r>0$ such that\textit{\ } 
\begin{equation}
\varphi \left( \lbrack x]_{\mathcal{M}_{0}}\right) [x]_{\mathcal{M}_{0}}\geq
\left\langle y,g\left( x\right) \right\rangle ,\text{ for}\quad \forall x\in
X_{1}\quad \text{with}\quad \lbrack x]_{\mathcal{M}}\geq r.  \label{Eqn 6.3}
\end{equation}
\end{theorem}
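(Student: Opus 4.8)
The plan is to construct a solution by a Faedo--Galerkin scheme, to solve the resulting finite-dimensional systems by a Brouwer-type acute-angle argument whose sign is supplied precisely by the coercivity hypothesis (condition 2) together with the growth restriction (\ref{Eqn 6.3}), and then to pass to the limit using the weak compactness of $f$ (condition 1) and the ``reflexivity'' of the pn-space $\mathcal{M}_0$ established in Appendix A. I would treat case $(\alpha)$ first, where $g\equiv L$ is linear continuous, $X_1=X_0\subseteq\mathcal{M}_0$ is a separable topological vector space dense in $\mathcal{M}_0$, and $\ker L^\ast=\{0\}$. Using separability I would fix a sequence $\{e_j\}_{j\geq1}\subset X_1$ whose finite linear combinations are dense in $X_1$ (hence in $\mathcal{M}_0$), set $X_1^{(m)}=\mathrm{span}\{e_1,\dots,e_m\}$, and seek $x_m=\sum_{j=1}^m c_j^{(m)}e_j\in X_1^{(m)}$ solving the Galerkin equations $\langle f(x_m),Le_j\rangle=\langle y,Le_j\rangle$ for $j=1,\dots,m$.

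To solve each finite-dimensional system I would introduce the continuous map $P_m:\mathbb{R}^m\to\mathbb{R}^m$, $(P_mc)_j=\langle f(\sum_k c_ke_k),Le_j\rangle-\langle y,Le_j\rangle$; its continuity follows because on the finite-dimensional space $X_1^{(m)}$ weak and strong convergence coincide, so the weak continuity of $f$ from condition 1 becomes ordinary continuity. Writing $x_m=\sum_k c_ke_k$, the pairing $\langle P_mc,c\rangle=\langle f(x_m),Lx_m\rangle-\langle y,Lx_m\rangle$ can be evaluated on the boundary $[x]_{\mathcal{M}_0}=r$ of the sublevel set $\{x\in X_1^{(m)}:[x]_{\mathcal{M}_0}\leq r\}$, which is star-shaped about the origin by the scaling property N of the p-norm. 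There condition 2 and hypothesis (\ref{Eqn 6.3}) give $\langle P_mc,c\rangle\geq\varphi(r)\,r-\langle y,Lx_m\rangle\geq0$, so the acute-angle corollary of Brouwer's fixed point theorem produces a zero $x_m$ inside that sublevel set, i.e. a Galerkin solution carrying the uniform a priori bound $[x_m]_{\mathcal{M}_0}\leq r$.

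For the passage to the limit I would invoke that a weakly complete pn-space over a reflexive Banach space is ``reflexive'' (Appendix A), so the bounded sequence $\{x_m\}$ admits a subsequence with $x_{m_k}\overset{\mathcal{M}_0}{\rightharpoonup}x_0\in\mathcal{M}_0$. Condition 1 then yields $f(x_{m_k})\overset{Y}{\rightharpoonup}f(x_0)$, whence for each fixed $j$ and all $m_k\geq j$ I can pass to the limit in $\langle f(x_{m_k}),Le_j\rangle=\langle y,Le_j\rangle$ to obtain $\langle f(x_0),Le_j\rangle=\langle y,Le_j\rangle$ for every $j$. Because $\ker L^\ast=\{0\}$ the range of $L$ is dense in $Y^\ast$, so $\{Le_j\}$ spans a dense subspace and the identity extends to all $y^\ast\in Y^\ast$, giving (\ref{Eqn 6.2}), that is $f(x_0)=y$ in $\mathcal{M}_0$. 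In case $(\beta)$ I would run the analogous scheme inside the everywhere dense linear manifold $g(X_1)\subseteq Y^\ast$, solve the finite-dimensional problems as above, and recover the limit as $x_0=g^{-1}(v_0)$ through the weak compactness of $g^{-1}:Y^\ast\to\mathcal{M}_0$ combined with that of $f$, density of $g(X_1)$ again upgrading the tested identity to all of $Y^\ast$.

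The decisive and most delicate step will be the passage to the limit. No monotonicity is assumed, so the entire burden of identifying the weak limit of $f(x_{m_k})$ with $f(x_0)$ rests on the weak compactness (weak continuity) of $f$ in condition 1 and on the reflexivity of the pn-space; the real work is to check that these abstract hypotheses genuinely apply, in particular that the weak topology of $\mathcal{M}_0$ inherited through $g$ from the ambient Banach space of Appendix A is the very topology in which both the a priori bound and the weak continuity of $f$ are expressed. A secondary but essential technical point is the density argument ($\ker L^\ast=\{0\}$ in case $(\alpha)$, $g(X_1)$ dense in case $(\beta)$), which is exactly what promotes the Galerkin identity, known a priori only against the test family, to the full functional equation (\ref{Eqn 6.2}).
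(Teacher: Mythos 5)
Your proof is correct and is essentially the paper's own argument: Galerkin approximation over a complete system (in $X_{1}$ for case ($\alpha $), in $g\left( X_{1}\right) \subseteq Y^{\ast }$ for case ($\beta $)), finite-dimensional solvability by the acute-angle corollary of Brouwer's theorem with the sign supplied by condition 2 together with (\ref{Eqn 6.3}), a weakly convergent subsequence obtained from the ``reflexivity'' of $\mathcal{M}_{0}$, identification of the limit through the weak compactness of $f$, and the final passage from the tested identity to $f\left( x_{0}\right) =y$ via $\ker L^{\ast }=\left\{ 0\right\} $ (equivalently, density of the range of $L$ in $Y^{\ast }$) in case ($\alpha $) or density of $g\left( X_{1}\right) $ in case ($\beta $). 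The only cosmetic deviation is that you run the acute-angle argument on the boundary of a $p$-norm sublevel set (star-shaped by property N) rather than on Euclidean spheres in $\mathbb{R}^{m}$ as the paper does; both versions rest on the same implicit compatibility between the coordinate topology of the finite-dimensional subspace and the $p$-norm, so nothing essential changes.
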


\begin{proof}
Assume that the conditions 1 and 2 ($\alpha $)\textit{\ }are fulfilled and $%
y\in Y$ such that (\ref{Eqn 6.3}) holds. We are going to use Galerkin's
approximation method. Let $\left\{ x^{k}\right\} _{k=1}^{\infty }$ be a
complete system in the (separable) space $X_{1}\equiv X_{0}$. Then, we are
looking for approximate solutions in the form $x_{m}=\overset{m}{\underset{%
k=1}{\sum }}c_{mk}x^{k},$ where $c_{mk}$ are unknown coefficients, that
might be determined from the system of algebraic equations 
\begin{equation}
\Phi _{k}\left( c_{m}\right) :=\left\langle f\left( x_{m}\right) ,g\left(
x^{k}\right) \right\rangle -\left\langle y,g\left( x^{k}\right)
\right\rangle =0,\quad k=1,2,...,m  \label{Eqn 6.4}
\end{equation}%
with $c_{m}\equiv \left( c_{m1},c_{m2},...,c_{mm}\right) $.

We observe that the mapping \ $\Phi \left( c_{m}\right) :=\left( \Phi
_{1}\left( c_{m}\right) ,\Phi _{2}\left( c_{m}\right) ,...,\Phi _{m}\left(
c_{m}\right) \right) $ is continuous by virtue of condition 1. (\ref{Eqn 6.4}%
) implies the existence of such $r=r\left( \left\Vert y\right\Vert
_{Y}\right) >0$ that the \textquotedblright acute angle\textquotedblright\
condition is fulfilled for all $x_{m}$ with $\left[ x_{m}\right] _{\mathcal{M%
}_{0}}\geq r$, i.e. for any $c_{m}\in S_{r_{1}}^{R^{m}}\left( 0\right)
\subset R^{m}$, $r_{1}\geq r$ the inequality 
\begin{equation*}
\overset{m}{\underset{k=1}{\sum }}\left\langle \Phi _{k}\left( c_{m}\right)
,c_{mk}\right\rangle \equiv \left\langle f\left( x_{m}\right) ,g\left( 
\overset{m}{\underset{k=1}{\sum }}c_{mk}x^{k}\right) \right\rangle
-\left\langle y,g\left( \overset{m}{\underset{k=1}{\sum }}c_{mk}x^{k}\right)
\right\rangle =\quad
\end{equation*}%
\begin{equation*}
\left\langle f\left( x_{m}\right) ,g\left( x_{m}\right) \right\rangle
-\left\langle y,g\left( x_{m}\right) \right\rangle \geq 0,\quad \forall
c_{m}\in 
\mathbb{R}
^{m},\left\Vert c_{m}\right\Vert _{%
\mathbb{R}
^{m}}=r_{1}.
\end{equation*}%
holds. The solvability of system (\ref{Eqn 6.4}) for each $m=1,2,\ldots $
follows from a well-known lemma on the \textquotedblleft acute
angle\textquotedblright\ (\cite{L, D, S3}),\ which is equivalent to the
Brouwer's fixed-point theorem. Thus, the sequence $\left\{ x_{m}\left\vert
~m\geq \right. 1\right\} $ of the approximate solutions, that is contained
in a bounded subset of the space $\mathcal{M}_{0}$. Further arguments are
analogous to those from \cite{L, S4} therefore we omit them. It remains to
pass to the limit in (\ref{Eqn 6.4})\ by $m$ and use a weak convergency of a
subsequence of the sequence $\left\{ x_{m}\left\vert ~m\geq \right.
1\right\} $, the weak compactness of the mapping $f$, and finally, the
completeness of the system $\left\{ x^{k}\right\} _{k=1}^{\infty }$in the
space $X_{1}$.

Hence, we get the limit element $x_{0}=w-\underset{j\nearrow \infty }{\lim }%
x_{m_{j}}\in \mathcal{M}_{0}$ that is a solution of the equation 
\begin{equation}
\left\langle f\left( x_{0}\right) ,g\left( x\right) \right\rangle
=\left\langle y,g\left( x\right) \right\rangle ,\quad \forall x\in X_{0},
\label{Eqn 6.5}
\end{equation}%
or 
\begin{equation}
\left\langle g^{\ast }\circ f\left( x_{0}\right) ,x\right\rangle
=\left\langle g^{\ast }\circ y,x\right\rangle ,\quad \forall x\in X_{0}.
\label{Eqn 6.6}
\end{equation}

In the second case, i.e. when the conditions 1 and 2 ($\beta $)\ are
fulfilled and $y\in Y$ such that (\ref{Eqn 6.3}) holds, the approximate
solutions suppose to be looked for in the form 
\begin{equation}
x_{m}=g^{-1}\left( \overset{m}{\underset{k=1}{\sum }}c_{mk}y_{k}^{\ast
}\right) \equiv g^{-1}\left( y_{\left( m\right) }^{\ast }\right) ,\quad
i.e.\ x_{m}=g^{-1}\left( y_{\left( m\right) }^{\ast }\right)  \label{Eqn 6.7}
\end{equation}%
where $\left\{ y_{k}^{\ast }\right\} _{k=1}^{\infty }\subset Y^{\ast }$ is a
complete system in the (separable) space $Y^{\ast }$ and belongs to $g\left(
X_{1}\right) $. The unknown coefficients $c_{mk}$, might be determined from
the system of algebraic equations 
\begin{equation}
\widetilde{\Phi }_{k}\left( c_{m}\right) :=\left\langle f\left( x_{m}\right)
,y_{k}^{\ast }\right\rangle -\left\langle y,y_{k}^{\ast }\right\rangle
=0,\quad k=1,2,...,m  \label{Eqn 6.8}
\end{equation}%
with $c_{m}\equiv \left( c_{m1},c_{m2},...,c_{mm}\right) $. Taking this and
our conditions into account we get 
\begin{equation}
\left\langle f\left( x_{m}\right) ,y_{k}^{\ast }\right\rangle -\left\langle
y,y_{k}^{\ast }\right\rangle =\left\langle f\left( g^{-1}\left( y_{\left(
m\right) }^{\ast }\right) \right) ,y_{k}^{\ast }\right\rangle -\left\langle
y,y_{k}^{\ast }\right\rangle =0,  \label{Eqn 6.9}
\end{equation}%
for $k=1,2,...,m$.

As it was observed above the mapping \ 
\begin{equation*}
\widetilde{\Phi }\left( c_{m}\right) :=\left( \widetilde{\Phi }_{1}\left(
c_{m}\right) ,\widetilde{\Phi }_{2}\left( c_{m}\right) ,...,\widetilde{\Phi }%
_{m}\left( c_{m}\right) \right)
\end{equation*}%
is continuous by virtue of the conditions 1 and 2($\beta $). Also, (\ref{Eqn
6.3}) implies the existence of such $\widetilde{r}>0$ that the
\textquotedblright acute angle\textquotedblright\ condition is fulfilled for
all $y_{\left( m\right) }^{\ast }$ with $\left\Vert y_{\left( m\right)
}^{\ast }\right\Vert _{Y^{\ast }}\geq \widetilde{r}$ , i.e. for any $%
c_{m}\in S_{r_{1}}^{R^{m}}\left( 0\right) \subset R^{m}$, $\widetilde{r}%
_{1}\geq \widetilde{r}$ the inequality 
\begin{equation*}
\overset{m}{\underset{k=1}{\sum }}\left\langle \widetilde{\Phi }_{k}\left(
c_{m}\right) ,c_{mk}\right\rangle \equiv \left\langle f\left( x_{m}\right) ,%
\overset{m}{\underset{k=1}{\sum }}c_{mk}y_{k}^{\ast }\right\rangle
-\left\langle y,\overset{m}{\underset{k=1}{\sum }}c_{mk}y_{k}^{\ast
}\right\rangle =\quad
\end{equation*}%
\begin{equation*}
\left\langle f\left( g^{-1}\left( y_{\left( m\right) }^{\ast }\right)
\right) ,y_{\left( m\right) }^{\ast }\right\rangle -\left\langle y,y_{\left(
m\right) }^{\ast }\right\rangle =\left\langle f\left( x_{m}\right) ,g\left(
x_{m}\right) \right\rangle -\left\langle y,g\left( x_{m}\right)
\right\rangle \geq 0,
\end{equation*}%
\begin{equation*}
\forall c_{m}\in 
\mathbb{R}
^{m},\left\Vert c_{m}\right\Vert _{%
\mathbb{R}
^{m}}=\widetilde{r}_{1}.
\end{equation*}%
holds by virtue of our conditions. Consequently, the solvability of system (%
\ref{Eqn 6.8}) (or (\ref{Eqn 6.9})) for each $m=1,2,\ldots $ follows from
the \textquotedblleft acute angle\textquotedblright\ lemma as above. Thus,
we obtained a sequence $\left\{ y_{\left( m\right) }^{\ast }\left\vert
~m\geq \right. 1\right\} $ of the approximate solutions, that is contained
in a bounded subset of $Y^{\ast }$. This implies an existence of a
subsequence $\left\{ y_{\left( m_{j}\right) }^{\ast }\right\} _{j=1}^{\infty
}$ that convergences weakly in $Y^{\ast }$. Consequently, the sequence $%
\left\{ x_{m_{j}}\right\} _{j=1}^{\infty }\equiv \left\{ g^{-1}\left(
y_{\left( m_{j}\right) }^{\ast }\right) \right\} _{j=1}^{\infty }$ converges
weakly in the space $\mathcal{M}_{0}$ by vertue of the condition 2($\beta $)
(may be after passing to the subsequence). It remains to pass to the limit
in (\ref{Eqn 6.9})\ by $j$ and use a weak convergency of the subsequence of
the sequence $\left\{ y_{\left( m\right) }^{\ast }\left\vert ~m\geq \right.
1\right\} $, the weak compactness of mappings $f$ and $g^{-1}$, and the
completeness of the system $\left\{ y_{k}^{\ast }\right\} _{k=1}^{\infty }$%
in the space $Y^{\ast }$.

Hence, we get a limit element $x_{0}=w-\underset{j\nearrow \infty }{\lim }%
x_{m_{j}}$ $=w-\underset{j\nearrow \infty }{\lim }g^{-1}\left( y_{\left(
m_{j}\right) }^{\ast }\right) \in \mathcal{M}_{0}$ that is the solution of
the equation 
\begin{equation}
\left\langle f\left( x_{0}\right) ,y^{\ast }\right\rangle =\left\langle
y,y^{\ast }\right\rangle ,\quad \forall y^{\ast }\in Y^{\ast }.
\label{Eqn 6.10}
\end{equation}%
Q.e.d.
\end{proof}

\begin{remark}
\label{Remark 6.1}\textit{It is obvious, that if there exists a function }$%
\psi :R_{+}^{1}\longrightarrow R_{+}^{1}$, $\psi \in C^{0}$\textit{such that 
}$\psi \left( \xi \right) =0\Longleftrightarrow \xi =0$\textit{\ and the
inequality }$\psi \left( \left\Vert x_{1}-x_{2}\right\Vert _{X}\right) \leq
\left\Vert f\left( x_{1}\right) -f\left( x_{2}\right) \right\Vert _{Y}$%
\textit{\ \ is fulfilled for all }$x_{1},x_{2}\in \mathcal{M}_{0},$\textit{\
\ then a solution of the equation (\ref{Eqn 6.2}) is unique. }
\end{remark}

\begin{corollary}
\label{Corollary 6.1}Assume that the conditions of Theorem 7 are fulfilled
and \textit{there is a continuous function }$\varphi
_{1}:R_{+}^{1}\longrightarrow R_{+}^{1}$ such that $\left\Vert g\left(
x\right) \right\Vert _{Y^{\ast }}\leq \varphi _{1}\left( [x]_{\mathcal{M}%
_{0}}\right) $ for any $x\in X_{0}$ and\textit{\ }$\varphi \left( \tau
\right) \nearrow +\infty $\textit{\ }and $\frac{\varphi \left( \tau \right)
\tau }{\varphi _{1}\left( \tau \right) }\nearrow +\infty $\textit{\ as }$%
\tau \nearrow +\infty $. \textit{Then, } the \textit{equation (\ref{Eqn 6.2}%
) is solvable in }$\mathcal{M}_{0}$ \textit{for any }$y\in Y$.
\end{corollary}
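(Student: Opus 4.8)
The plan is to deduce the statement directly from Theorem \ref{Theorem 6.1} by showing that the two additional growth hypotheses force the coercivity condition (\ref{Eqn 6.3}) to hold for \emph{every} $y\in Y$, with a threshold $r$ depending only on $\|y\|_Y$. No structural work beyond a scalar estimate is then required, since the Galerkin construction, the weak compactness of $f$ and $g^{-1}$, and the reflexivity assumptions are already packaged inside Theorem \ref{Theorem 6.1}.

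First I would fix an arbitrary $y\in Y$ and take any $x\in X_1$. Applying the duality pairing bound between $Y$ and $Y^\ast$ together with the assumed control $\|g(x)\|_{Y^\ast}\leq\varphi_1([x]_{\mathcal{M}_0})$, one gets
\begin{equation*}
\langle y,g(x)\rangle\leq\|y\|_Y\,\|g(x)\|_{Y^\ast}\leq\|y\|_Y\,\varphi_1\!\left([x]_{\mathcal{M}_0}\right).
\end{equation*}
Writing $\tau\equiv[x]_{\mathcal{M}_0}$, the requirement $\varphi(\tau)\tau\geq\langle y,g(x)\rangle$ appearing in (\ref{Eqn 6.3}) will therefore follow once $\varphi(\tau)\tau\geq\|y\|_Y\varphi_1(\tau)$, i.e. once
\begin{equation*}
\frac{\varphi(\tau)\,\tau}{\varphi_1(\tau)}\geq\|y\|_Y.
\end{equation*}

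Next I would invoke the hypothesis that the quotient $\varphi(\tau)\tau/\varphi_1(\tau)\nearrow+\infty$ as $\tau\nearrow+\infty$: this produces a number $r=r(\|y\|_Y)>0$ such that the displayed inequality holds for all $\tau\geq r$. The companion assumption $\varphi(\tau)\nearrow+\infty$ guarantees that $\varphi$ is eventually positive and nondecreasing, so the quotient is meaningful and monotone for large $\tau$; in the degenerate event $\varphi_1(\tau)=0$ one has $g(x)=0$ and the bound $\varphi(\tau)\tau\geq 0=\langle y,g(x)\rangle$ is immediate. Consequently, for this particular $y$, condition (\ref{Eqn 6.3}) is satisfied with the chosen $r$, and Theorem \ref{Theorem 6.1} yields a solution $x_0\in\mathcal{M}_0$ of (\ref{Eqn 6.2}).

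Since $y\in Y$ was arbitrary, solvability holds for every $y\in Y$, which is the claim. I do not expect a genuine obstacle: the only point needing care is that the quotient condition must be combined with the monotonicity of $\varphi$ so that the threshold $r$ can be chosen so (\ref{Eqn 6.3}) holds uniformly for all $\tau\geq r$, rather than merely along a sequence; everything else is a direct appeal to the already-established Theorem \ref{Theorem 6.1}.
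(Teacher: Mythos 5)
Your proposal is correct and is exactly the intended argument: the paper states Corollary \ref{Corollary 6.1} without a written proof, precisely because it follows from Theorem \ref{Theorem 6.1} by the verification you give, namely that the duality estimate $\langle y,g(x)\rangle \leq \left\Vert y\right\Vert _{Y}\varphi _{1}\left( [x]_{\mathcal{M}_{0}}\right) $ together with $\frac{\varphi \left( \tau \right) \tau }{\varphi _{1}\left( \tau \right) }\nearrow +\infty $ yields a threshold $r=r\left( \left\Vert y\right\Vert _{Y}\right) $ making condition (\ref{Eqn 6.3}) hold for every $y\in Y$. Your additional care about the degenerate case $\varphi _{1}(\tau )=0$ and about uniformity in $\tau \geq r$ is sound but not a genuine obstacle, since the monotone divergence hypothesis already gives the uniform bound.
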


\subsection{Appendix \textbf{C }}

Now, we are ready to provide the proof of Theorem \ref{Theorem 2.2}. Let $%
\left\{ x^{k}\right\} _{k=1}^{\infty }$ be a complete system in the
(separable) space $X_{0}$ and $\left\{ \theta ^{s}\left( t\right) \right\}
_{s=1}^{\infty }$ be a complete system in the (separable) space $L_{p}\left(
0,T\right) ,$ then $\left\{ \theta ^{s}\left( t\right) x^{k}\right\}
_{s,k=1}^{\infty }$ is a complete system in the separable space $L_{p}\left(
0,T;X_{0}\right) $.

\textbf{Proof of the Theorem \ref{Theorem 2.2}.} We are going to use the
method of elliptic regularization (see, for example, \cite{L}\footnote{%
see, also, Soltanov K. N., Sprekels J. - Nonlinear equations in nonreflexive
Banach spaces and fully nonlinear equations, Advances in Mathematical
Sciences and Applications, 1999, v. 9, no. 2, 939-972.}). Namely, first we
prove the solvability of the following auxiliary elliptic problem with a
small parameter $\varepsilon >0$. 
\begin{equation}
-\varepsilon \frac{d^{2}x_{\varepsilon }}{dt^{2}}+\frac{dx_{\varepsilon }}{dt%
}+f(t,x_{\varepsilon }\left( t\right) )=y\left( t\right) ,  \label{Eqn 6.11}
\end{equation}%
\begin{equation}
x_{\varepsilon }\left( 0\right) =0,\quad \frac{dx_{\varepsilon }}{dt}%
\left\vert ~_{t=T}\right. =0,~\varepsilon >0.  \label{Eqn 6.12}
\end{equation}

A solution of the problem (\ref{Eqn 6.11})-(\ref{Eqn 6.12}) would be
understood as an element $x_{\varepsilon }\left( t\right) \in $ $\underset{0}%
{\mathbf{P}}$\/$_{1,p,q}\left( 0,T;\mathcal{M}_{0},Y\right) $ that satisfies
the following functional equation 
\begin{equation*}
\varepsilon \underset{0}{\overset{T}{\int }}\left\langle \frac{%
dx_{\varepsilon }}{dt},\frac{dy^{\ast }}{dt}\right\rangle dt+\underset{0}{%
\overset{T}{\int }}\left\langle \frac{dx_{\varepsilon }}{dt},y^{\ast
}\right\rangle dt+
\end{equation*}%
\begin{equation}
\underset{0}{\overset{T}{\int }}\left\langle f(t,x_{\varepsilon }\left(
t\right) ),y^{\ast }\right\rangle dt=\underset{0}{\overset{T}{\int }}%
\left\langle y,y^{\ast }\right\rangle dt  \label{Eqn 6.13}
\end{equation}%
for any $y^{\ast }\in W_{q%
{\acute{}}%
}^{1}\left( 0,T;Y^{\ast }\right) \cap \left\{ y^{\ast }\left( t\right)
\left\vert ~y^{\ast }\left( 0\right) =0\right. \right\} $.

\begin{lemma}
\label{Lemma 6.1}Under the conditions of Theorem \ref{Theorem 2.2} the
equation (\ref{Eqn 6.13}) is solvable in the space $\underset{0}{\mathbf{P}}$%
\/$_{1,p,q}\left( 0,T;\mathcal{M}_{0},Y\right) $ for any $y\in G$ where $G$
is defined in Theorem \ref{Theorem 2.2}.
\end{lemma}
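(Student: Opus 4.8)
The plan is to mirror the Galerkin construction in the proof of Theorem \ref{Theorem 6.1} (the linear case $(\alpha)$, with $g\equiv L$), now applied to the regularized operator $\mathcal{F}_{\varepsilon}x\equiv -\varepsilon x''+x'+f(t,x)$ and to the weak formulation (\ref{Eqn 6.13}). Since $\{\theta^{s}(t)x^{k}\}_{s,k=1}^{\infty}$ is complete in $L_{p}(0,T;X_{0})$, I would seek approximate solutions $x_{m}=\sum c_{m,sk}\theta^{s}(t)x^{k}$, choosing the $\theta^{s}$ smooth in $t$ so that $x_{m}\in W_{q}^{1}(0,T;Y)$ with $x_{m}(0)=0$. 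The coefficients are determined by testing (\ref{Eqn 6.13}) against $L(\theta^{s}x^{k})=\theta^{s}Lx^{k}$ for $s,k\leq m$; because $L$ commutes with $d/dt$ and is linear, $Lx_{m}\in W_{q^{\prime}}^{1}(0,T;Y^{\ast})$ with $(Lx_{m})(0)=L(x_{m}(0))=0$, so $y^{\ast}=Lx_{m}$ is an admissible test element and summing the $k$-equations against $c_{mk}$ reproduces the pairing $\langle \mathcal{F}_{\varepsilon}(x_{m})-y,\,Lx_{m}\rangle$.

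The heart of the argument is the acute-angle inequality on the sphere $[x_{m}]_{L_{p}(0,T;\mathcal{M}_{0})}=r$. Inserting $y^{\ast}=Lx_{m}$ into (\ref{Eqn 6.13}) and using that $L$ commutes with $d/dt$, the regularizing term contributes $\varepsilon\int_{0}^{T}\langle x_{m}',Lx_{m}'\rangle\,dt$, which by the first inequality of (iv) (with $\xi=x_{m}'$) is bounded below by $\varepsilon(C_{0}\|L_{0}x_{m}'\|_{L_{q}(0,T;Y)}^{\nu}-C_{2})\geq -\varepsilon C_{2}$; the first-order term $\int_{0}^{T}\langle x_{m}',Lx_{m}\rangle\,dt$ is bounded below by $C_{1}\|L_{0}x_{m}\|_{Y}^{\nu}(T)-C_{2}\geq -C_{2}$ via the second inequality of (iv) at $t=T$; and the nonlinear term is governed by $\varphi([x_{m}]_{L_{p}(\mathcal{M}_{0})})[x_{m}]_{L_{p}(\mathcal{M}_{0})}$ through (iii). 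Finally, for $y\in G_{r}$ the defining inequality gives $\int_{0}^{T}\langle y,Lx_{m}\rangle\,dt\leq \int_{0}^{T}\langle f(t,x_{m}),Lx_{m}\rangle\,dt-c$, so the nonlinear and data terms combine to leave a surplus $c$. Collecting the three contributions, the total pairing is at least $(c-C_{2})-\varepsilon C_{2}$, which is positive once $r\geq\tau_{1}$ and $\varepsilon<(c-C_{2})/C_{2}$; that is, the gap $c-C_{2}>0$ in the definition of $G_{r}$ absorbs the constants produced by (iv). The acute-angle lemma, equivalent to Brouwer's theorem, then yields solvability of the finite algebraic system for every $m$.

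The same computation simultaneously furnishes a priori estimates: the coercivity of (iii) bounds $[x_{m}]_{L_{p}(0,T;\mathcal{M}_{0})}$ uniformly in $m$, while (iv) controls $\|L_{0}x_{m}\|_{Y}(T)$ and the $\varepsilon$-weighted quantity $\sqrt{\varepsilon}\,x_{m}'$, and the equation together with the weak compactness of $f$ (boundedness of $f(\cdot,x_{m})$ in $L_{q}(0,T;Y)$) bounds the time derivative, placing $\{x_{m}\}$ in a bounded subset of $\underset{0}{\mathbf{P}}_{1,p,q}(0,T;\mathcal{M}_{0},Y)$. Extracting a subsequence $x_{m_{j}}\rightharpoonup x_{\varepsilon}$, hypothesis (i) gives $f(\cdot,x_{m_{j}})\rightharpoonup f(\cdot,x_{\varepsilon})$ in $L_{q}(0,T;Y)$ and the linear terms pass to the limit by continuity; completeness of $\{\theta^{s}x^{k}\}$, the densities $\overline{X_{0}}^{\mathcal{M}_{0}}=\mathcal{M}_{0}$, $\overline{X_{0}}^{Y}=Y$, and $\ker(L^{\ast})=\{0\}$ (which makes testing against the range of $L$ faithful) then show that $x_{\varepsilon}$ satisfies (\ref{Eqn 6.13}) for every admissible $y^{\ast}$. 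I expect the main obstacle to be exactly the a priori estimate: one must verify that the two integrations by parts in time produce boundary terms that vanish under $x(0)=0$ and $x'(T)=0$, that the constants from (iv) are genuinely dominated by $c-C_{2}$ for small $\varepsilon$, and that the time derivative is controlled in $L_{q}(0,T;Y)$ uniformly in $m$ so that the weak limit indeed lies in $\underset{0}{\mathbf{P}}_{1,p,q}(0,T;\mathcal{M}_{0},Y)$.
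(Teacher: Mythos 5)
Your proposal is correct and is essentially the paper's own argument: the paper proves this lemma by verifying the hypotheses of Theorem \ref{Theorem 6.1} of Appendix B --- weak compactness of the regularized map via condition (ii) together with linearity and boundedness of the first two terms, plus the coercivity chain (\ref{Eqn 6.14}), which is assembled from exactly your three lower bounds coming from (iii) and (iv) --- and then cites that theorem, whose own proof is precisely the Galerkin/acute-angle scheme you have unrolled. The only differences are minor and, if anything, in your favor: you use the defining inequality of $G_{r}$ directly to obtain the surplus $c-C_{2}>0$, whereas the paper estimates $\left\vert \int_{0}^{T}\langle y,Lx_{\varepsilon }\rangle dt\right\vert \leq c_{1}\left\Vert y\right\Vert _{L_{q}\left( 0,T;Y\right) }\left[ x_{\varepsilon }\right] _{L_{p}\left( \mathcal{M}_{0}\right) }$ in (\ref{Eqn 6.14}) and then argues positivity of the last expression for a sufficiently large $p$-norm; and your explicit restriction $\varepsilon <\left( c-C_{2}\right) /C_{2}$ is harmless, since (\ref{Eqn 6.11})-(\ref{Eqn 6.12}) is posed with a small parameter and the lemma is only invoked in the limit $\varepsilon \searrow 0$ (the paper's $-\left( 1+\varepsilon \right) C_{2}$ term in (\ref{Eqn 6.14}) carries the same implicit requirement).
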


The statement of this lemma follows from Theorem \ref{Theorem 6.1} of
Appendix B (see, also \cite{S3, S5}). Indeed, the mapping generated by the
considered problem (\ref{Eqn 6.11})-(\ref{Eqn 6.12}) is weakly compact from $%
\underset{0}{\mathbf{P}}$\/$_{1,p,q}\left( 0,T;\mathcal{M}_{0},Y\right) $
into $\left( W_{q%
{\acute{}}%
}^{1}\left( 0,T;Y^{\ast }\right) \cap \left\{ y^{\ast }\left( t\right)
\left\vert ~y^{\ast }\left( 0\right) =0\right. \right\} \right) ^{\ast }$ by
virtue of condition (\textit{ii})\textit{\ }and because of first two terms
are linear bounded operators. Moreover, inequalities 
\begin{equation*}
\varepsilon \underset{0}{\overset{T}{\int }}\left\langle \frac{%
dx_{\varepsilon }}{dt},L\frac{dx_{\varepsilon }}{dt}\right\rangle dt+%
\underset{0}{\overset{T}{\int }}\left\langle \frac{dx_{\varepsilon }}{dt}%
,Lx_{\varepsilon }\right\rangle dt+
\end{equation*}%
\begin{equation*}
\underset{0}{\overset{T}{\int }}\left\langle f(t,x_{\varepsilon }\left(
t\right) ),Lx_{\varepsilon }\right\rangle dt-\underset{0}{\overset{T}{\int }}%
\left\langle y,Lx_{\varepsilon }\right\rangle dt\geq \varepsilon
C_{0}\left\Vert \frac{dx_{\varepsilon }}{dt}\right\Vert _{L_{q}\left(
0,T;Y\right) }^{\nu }+
\end{equation*}%
\begin{equation*}
\varphi \left( \left[ x_{\varepsilon }\right] _{L_{p}\left( \mathcal{M}%
_{0}\right) }\right) \left[ x_{\varepsilon }\right] _{L_{p}\left( \mathcal{M}%
_{0}\right) }-c_{1}\left\Vert y\right\Vert _{L_{q}\left( Y\right) }\left[
x_{\varepsilon }\right] _{L_{p}\left( \mathcal{M}_{0}\right) }-\left(
1+\varepsilon \right) C_{2}\geq
\end{equation*}%
\begin{equation}
\left[ \varphi \left( \left[ x_{\varepsilon }\right] _{L_{p}\left( 0,T;%
\mathcal{M}_{0}\right) }\right) -c_{1}\left\Vert y\right\Vert _{L_{q}\left(
0,T;Y\right) }\right] \left[ x_{\varepsilon }\right] _{L_{p}\left( 0,T;%
\mathcal{M}_{0}\right) }-c  \label{Eqn 6.14}
\end{equation}%
are fulfilled for any $x_{\varepsilon }\in W_{p}^{1}\left( 0,T;X_{0}\right)
\cap \left\{ x_{\varepsilon }\left( t\right) \left\vert ~x_{\varepsilon
}\left( 0\right) =0\right. \right\} .$It is also clear that for a
sufficiently large $p-$norm of $x_{\varepsilon }\left( t\right) $ there is a
subset of $L_{q}\left( 0,T;Y\right) $ such that the last expression in (\ref%
{Eqn 6.14}) is greater than zero under the conditions of Theorem \ref%
{Theorem 2.2}. These and conditions \textit{iii }and\textit{\ iv }show%
\textit{\ }that the other conditions of the above mentioned result are
fulfilled and, consequently, the equation (\ref{Eqn 6.13}) is solvabile (see
also \cite{S5}). Thus, for each $y\in L_{q}\left( 0,T;Y\right) $ there is a
function $x_{\varepsilon }\in \underset{0}{\mathbf{P}}$\/$_{1,p,q}\left( 0,T;%
\mathcal{M}_{0},Y\right) $ that satisfies the equation (\ref{Eqn 6.13}) for
any $\forall y^{\ast }\in W_{q\prime }^{1}\left( 0,T;Y^{\ast }\right) $ i.e. 
\begin{equation*}
\varepsilon \underset{0}{\overset{T}{\int }}\left\langle \frac{%
dx_{\varepsilon }}{dt},\frac{dy^{\ast }}{dt}\right\rangle dt+\underset{0}{%
\overset{T}{\int }}\left\langle \frac{dx_{\varepsilon }}{dt},y^{\ast
}\right\rangle dt+
\end{equation*}%
\begin{equation}
+\underset{0}{\overset{T}{\int }}\left\langle f(t,x_{\varepsilon }\left(
t\right) ),y^{\ast }\right\rangle dt=\underset{0}{\overset{T}{\int }}%
\left\langle y,y^{\ast }\right\rangle dt,\quad \forall y^{\ast }\in \overset{%
0}{W}~_{q\prime }^{1}\left( 0,T;Y^{\ast }\right) .  \label{Eqn 6.15}
\end{equation}%
The equality (\ref{Eqn 6.15}) can be rewritten in the form 
\begin{equation*}
\varepsilon \underset{0}{\overset{T}{\int }}\left\langle \frac{%
dx_{\varepsilon }}{dt},\frac{dy^{\ast }}{dt}\right\rangle dt=\underset{0}{%
\overset{T}{\int }}\left\langle y-\frac{dx_{\varepsilon }}{dt}%
-f(t,x_{\varepsilon }\left( t\right) ),y^{\ast }\right\rangle dt
\end{equation*}%
where $y-\frac{dx_{\varepsilon }}{dt}-f(t,x_{\varepsilon }\left( t\right) )$
belongs to $L_{q}\left( 0,T;Y\right) $ because of $y\in L_{q}\left(
0,T;Y\right) $ and $\frac{dx_{\varepsilon }}{dt}$, $f(t,x_{\varepsilon
}\left( t\right) )\in L_{q}\left( 0,T;Y\right) $ for any $x_{\varepsilon
}\in \underset{0}{\mathbf{P}}$\/$_{1,p,q}\left( 0,T;\mathcal{M}_{0},Y\right) 
$. Hence, according to our conditions, for each fixed $\varepsilon >0,$ and
boundedness of the right part of (\ref{Eqn 6.14}) we obtain $\frac{%
d^{2}x_{\varepsilon }}{dt^{2}}\in L_{q}\left( 0,T;Y\right) $ and
consequently, the boundary condition $\frac{dx_{\varepsilon }}{dt}\left\vert
~_{t=T}\right. $ is defined properly. Thus, the function $x_{\varepsilon
}\left( t\right) $ is a solution of the equation 
\begin{equation*}
-\varepsilon \underset{0}{\overset{T}{\int }}\left\langle \frac{%
d^{2}x_{\varepsilon }}{dt^{2}},y^{\ast }\right\rangle dt+\underset{0}{%
\overset{T}{\int }}\left\langle \frac{dx_{\varepsilon }}{dt},y^{\ast
}\right\rangle dt+
\end{equation*}%
\begin{equation}
+\underset{0}{\overset{T}{\int }}\left\langle f(t,x_{\varepsilon }\left(
t\right) ),y^{\ast }\right\rangle dt=\underset{0}{\overset{T}{\int }}%
\left\langle y,y^{\ast }\right\rangle dt,\quad \forall y^{\ast }\in
L_{q\prime }\left( 0,T;Y^{\ast }\right) .  \label{Eqn 6.16}
\end{equation}%
On the other hand, considering this equation for any $y^{\ast }\in
W~_{q\prime }^{1}\left( 0,T;Y^{\ast }\right) $ and comparing it with (\ref%
{Eqn 6.15}) we obtain $\frac{dx_{\varepsilon }}{dt}\left\vert ~_{t=T}\right.
=0$ (by using argumentations similar to those from \cite{L, D}).

Hence, we proved that the problem (\ref{Eqn 6.11})-(\ref{Eqn 6.12}) is
solvable in the space $\underset{0}{\mathbf{P}}$\/$_{1,p,q}\left( 0,T;%
\mathcal{M}_{0},Y\right) $ $\cap $ $W~_{q\prime }^{2}\left( 0,T;Y\right)
\cap $ $\left\{ x_{\varepsilon }\left( t\right) \left\vert ~\frac{%
dx_{\varepsilon }}{dt}\left\vert ~_{t=T}\right. =0\right. \right\} $ for
each fixed $\varepsilon >0$.

Now, it is necessary to pass to the limit at $\varepsilon \searrow 0$. To
this end, we need the uniformly on $\epsilon $ estimation of $\frac{%
dx_{\varepsilon }}{dt}$.

Further, we consider the equation 
\begin{equation*}
-\varepsilon \underset{0}{\overset{T}{\int }}\frac{d^{2}}{dt^{2}}%
\left\langle x_{\varepsilon },Lx^{k}\right\rangle \theta ^{s}\left( t\right)
dt+\underset{0}{\overset{T}{\int }}\frac{d}{dt}\left\langle x_{\varepsilon
},Lx^{k}\right\rangle \theta ^{s}\left( t\right) dt=
\end{equation*}%
\begin{equation}
\underset{0}{\overset{T}{\int }}\left\langle y\left( t\right)
-f(t,x_{\varepsilon }\left( t\right) ),Lx^{k}\right\rangle \theta ^{s}\left(
t\right) dt\equiv \underset{0}{\overset{T}{\int }}\left\langle
f_{0}(t,x_{\varepsilon }\left( t\right) ),Lx^{k}\right\rangle \theta
^{s}\left( t\right) dt,  \label{Eqn 6.17}
\end{equation}%
where $\left\{ \theta ^{s}\left( t\right) x^{k}\right\} _{s,k=1}^{\infty }$
is a complete system in $W_{p}^{1}\left( 0,T;X_{0}\right) ,$ and
consequently, $\frac{dx_{\varepsilon }}{dt}$ is a solution of the problem 
\begin{equation}
-\varepsilon \frac{d^{2}x_{\varepsilon }}{dt^{2}}+\frac{dx_{\varepsilon }}{dt%
}=f_{0\varepsilon }(t),\quad t\in \left( 0,T\right)  \label{Eqn 6.18}
\end{equation}%
\begin{equation}
x_{\varepsilon }\left( 0\right) =0,\quad \frac{dx_{\varepsilon }}{dt}%
\left\vert ~_{t=T}\right. =0,  \label{Eqn 6.19}
\end{equation}%
as $x_{\varepsilon }\left( t\right) $ belongs to a bounded subset of $%
L_{p}\left( 0,T;\mathcal{M}_{0}\right) $ and $f_{0}(t,x_{\varepsilon }\left(
t\right) )$ belongs to a bounded subset of $L_{q}\left( 0,T;Y\right) $ at $%
\varepsilon \searrow 0$, consequently $f_{0\varepsilon }\in L_{q}\left(
0,T\right) $ and belongs to a bounded subset of this space at $\varepsilon
\searrow 0$ and under the conditions of $y\left( t\right) $.

The solution of the problem (\ref{Eqn 6.18})-(\ref{Eqn 6.19}) satisfies 
\begin{equation*}
\frac{dx_{\varepsilon }\left( t\right) }{dt}=\frac{1}{\varepsilon }\underset{%
0}{\overset{T-t}{\int }}f_{0\varepsilon }\left( T-\tau \right) ~exp\left\{ -%
\frac{T-t-\tau }{\varepsilon }\right\} d\tau .
\end{equation*}%
Applying the generalized Minkowski's inequality and taking into account that 
$\frac{1}{\varepsilon }\overset{\infty }{\underset{0}{\int }}~exp\left\{ -%
\frac{\tau }{\varepsilon }\right\} d\tau =1$ we get $\left\Vert \frac{%
dx_{\varepsilon }}{dt}\right\Vert _{L_{q}\left( 0,T;Y\right) }\leq C<\infty $
for some positive $C$ that is undependent on $\varepsilon $. Thus, for each $%
y\left( t\right) \in L_{q}\left( 0,T;Y\right) $ the function $x_{\varepsilon
}(t)$ belongs to a bounded subset of the space$\ \underset{0}{\mathbf{P}\/}%
\/_{1,p,q}\left( 0,T;\mathcal{M}_{0},Y\right) $ uniformly on $\varepsilon $.
The "reflexivity" of $\mathcal{M}_{0}$ and the reflexivity of $Y$ allow us
to pass to the limit for $\varepsilon $ $\searrow 0$ in all terms of the (%
\ref{Eqn 6.17}) except for the first one. Therefore, it remains to estimate
just the first term of (\ref{Eqn 6.17}). We have 
\begin{equation*}
\left\vert -\varepsilon \underset{0}{\overset{T}{\int }}\left\langle \frac{%
d^{2}x_{\varepsilon }}{dt^{2}},y^{\ast }\right\rangle dt\right\vert \leq
\varepsilon \underset{0}{\overset{T}{\int }}\left\vert \left\langle \frac{%
dx_{\varepsilon }}{dt},\frac{dy^{\ast }}{dt}\right\rangle \right\vert dt\leq
\end{equation*}%
\begin{equation*}
\varepsilon \left\Vert \frac{dx_{\varepsilon }}{dt}\right\Vert _{L_{q}\left(
0,T;Y\right) }\left\Vert \frac{dy^{\ast }}{dt}\right\Vert _{L_{q\prime
}\left( 0,T;Y^{\ast }\right) }
\end{equation*}%
for any $y^{\ast }\in W_{q%
{\acute{}}%
}^{1}\left( 0,T;Y^{\ast }\right) \cap \left\{ y^{\ast }\left( t\right)
\left\vert ~y^{\ast }\left( 0\right) =0\right. \right\} $. Taking into
account the estimation $\varepsilon \left\Vert \frac{dx_{\varepsilon }}{dt}%
\right\Vert _{L_{q}\left( 0,T;Y\right) }^{\nu }$ $\leq C<\infty $ for $\nu
>1 $, that is valid by virtue of the a priori estimations, we get 
\begin{equation*}
\left\vert -\varepsilon \underset{0}{\overset{T}{\int }}\left\langle \frac{%
d^{2}x_{\varepsilon }}{dt^{2}},y^{\ast }\right\rangle dt\right\vert \leq
\varepsilon ^{\frac{\nu -1}{\nu }}\widetilde{C}\left\Vert \frac{dy^{\ast }}{%
dt}\right\Vert _{L_{q\prime }\left( 0,T;Y^{\ast }\right) }.
\end{equation*}%
This means that the first term of (\ref{Eqn 6.17}) vanishes when $%
\varepsilon \searrow 0$. Thus, considering the equation (\ref{Eqn 6.17}) for
any $\xi \in L_{p}\left( 0,T;X_{0}\right) ,$ passing to the limit at $%
\varepsilon \searrow 0$ and taking into account that $\ker L^{\ast }=\left\{
0\right\} $ complete the proof of Theorem \ref{Theorem 2.2}.

\bigskip

\bigskip

\end{document}